\newtheorem{theorem}{Theorem}[section]
\newtheorem{lemma}[theorem]{Lemma}
\newtheorem{example}{Example}[section]
\theoremstyle{definition}
\begin{document}

\title{Spreading speeds of KPP-type nonlocal dispersal in heterogeneous media}

\author{Xing Liang}
\author {Tao Zhou}
\address{School of Mathematical Sciences and Wu Wen-Tsun Key Laboratory of Mathematics, University of Science and Technology of China, Hefei, Anhui 230026, China}
\date{17th April, 2018 }
\maketitle

\begin{abstract}
 \end{abstract}

\keywords{Nonlocal dispersal, generalized principal eigenvalue, spreading speed, heterogeneous media}

\section{Introduction}

In this paper, we focus on the large time behavior of the solution of the following problem:
\begin{equation}\label{1.1}
  \left\{
   \begin{aligned}
   u_{t}(t,x)=\int_{\mathbf{R}}K(x,y)u(t,y)dy-b(x)u(t,x)+f(x,u)\ \ t>0,x\in\mathbf{R},  \\
  0\leq u(0,x)=u_{0}(x)\leq1,\ u_{0}\neq0\ \text{with compact support,} \\
   \end{aligned}
   \right.
  \end{equation}
where function $K(x,y)$ represents the dispersal kernel, and $b(x)=\int_{R}K(x,y)dy$.
We assume that $f$ satisfies some KPP-type conditions. This will be told in detail later. A simple
example is $f(x,s)=s(1-s)$. Another type of dispersal is so-called random dispersal in the following form:
\begin{equation}
 \label{continuousequation} \left\{
   \begin{aligned}
   \partial_{t}u=d(x)\partial_{xx}u+q(x)\partial_{x}u+f(x,u)\ \ t>0,x\in\mathbf{R}, \\
   0\leq u(0,x)\leq1,\{x:u(0,x)\neq0\}\neq\emptyset\ \ \text{is bounded} .\\
   \end{aligned}
   \right.
  \end{equation}
The pioneer works on the dynamics of the type of equations like
\eqref{continuousequation} were done by Fisher \cite{F1} and Kolmogorov, Petrovsky, Piskunov \cite{K1} in the homogeneous case:
$$\partial_{t}u=\partial_{xx}u+f(u),$$
where $f\in\mathcal{C}^{1}[0,1]$, $f(0)=f(1)=0$. In fact, in \cite{F1,K1}, they proved the existence of the minimal wave speed in the case where $f(s)>0$
and $f^{\prime}(s)\leq f^{\prime}(0)s$ for any $s\in(0,1)$.
 Moreover, in the homogeneous case,
Aronson and Weinberger \cite{A1} proved that if
$f^{\prime}(0)>0$ and $f(s)>0$ for any $s\in(0,1)$, then there exists
$\omega^{\ast}>0$ such that
\begin{equation*}
  \left\{
   \begin{aligned}
   &\text{for all}\ \ \ \ \omega>\omega^{\ast},&\displaystyle{\lim_{t\rightarrow\infty}}\sup \limits_{x\geq\omega t}|u(t,x)|=0,\\
   &\text{for all}\ \ \omega\in(0,\omega^{\ast}),\ \ &\displaystyle{\lim_{t\rightarrow\infty}}\sup \limits_{0\leq x\leq\omega t}|u(t,x)-1|=0.\\
   \end{aligned}
   \right.
  \end{equation*}
A similar result still holds if $x\leq0$. An easy corollary is
$\displaystyle{\lim_{t\rightarrow\infty}}u(t,x+\omega t)=0$ if $\omega>\omega^{\ast}$ and
$\displaystyle{\lim_{t\rightarrow\infty}}u(t,x+\omega t)=1$ locally uniform in $x\in\mathbf{R}$ if $0\leq\omega<\omega^{\ast}$.
This result is called spreading property and $\omega^{\ast}$ is called
speading speed.

In the past decades, the spreading properties in heterogeneous media got increasing attentions of mathematicians.
The propagation problems in (spatially) periodic media, one simplest heterogenous case,
were considered by mathematicians widely. Applying the approach of
probability,  \cite{GF} first proved the existence of spreading speeds for one-dimensional
KPP-type reaction-diffusion equations in periodic media. \cite{SKT, X} gave the definition of the spatially
periodic traveling waves independently, and then \cite{HZ} proved the existence of
the spatially periodic traveling waves of KPP-type equations in the distributional
sense. In a series of works (e.g.\cite{BH, BHR, BHN}), Berestycki, Hamel and their colleagues investigated
the traveling waves and spreading speeds of KPP-type reaction-diffusion equations
in high-dimensional periodic media.

Besides above works, more general
frameworks are provided by \cite{LZ,W} to study spreading properties for more general diffusion systems in periodic media.

However, there are only a few works on the spreading properties
of KPP-type equations in more complicated media.
Berestycki, Hamel and Nadirashvili \cite{BHN2}  investigated spreading properties in higher dimension for the homogeneous equation in general unbounded domains.
Particularly, in \cite{BHN2}, the concepts of lower and upper spreading speeds were introduced. Then Berestycki and Nadin \cite{B2} also
introduced these two speeds again for \eqref{continuousequation} to study the spreading property. Precisely, for one-dimensional equation \eqref{continuousequation},
the upper and lower spreading speeds are defined by$$\omega^{\ast}:=\inf\{\omega\geq0,\ \displaystyle{\lim_{t\rightarrow\infty}}\sup \limits_{x\geq\omega t}|u(t,x)|=0\},$$
$$\omega_{\ast}:=\sup\{\omega\geq0,\ \displaystyle{\lim_{t\rightarrow\infty}}\sup \limits_{0\leq x\leq\omega t}|u(t,x)-1|=0\}.$$
They gave a sharp estimate on $\omega_{\ast},\ \omega^{\ast}$ by constructing
$\underline{\omega},\ \overline{\omega}$ where $\underline{\omega},\ \overline{\omega}$
are represented by two generalized principal eigenvalues (see Definition \ref{def2.1}) of the linearized equation such that
$$\underline{\omega}\leq\omega_{\ast}\leq\omega^{\ast}\leq\overline{\omega}.$$
Furthermore, they showed that if the coefficients are (asymptotically) almost periodic or random stationary ergodic, then $\underline{\omega}=\overline{\omega}$,
and hence $\omega_{\ast}=\omega^{\ast}$ is exactly the spreading speed. Most  recently, they also investigated multidimensional
and space-time heterogeneous case in \cite{B3}.
In fact, Shen (see e.g.\cite{s1,s2,s3}) also introduced the concepts of
lower and upper spreading speeds to study the spreading speeds of KPP-type equations in space-time heterogeneous media.
In \cite{LZh}, the authors obtained similar conclusions for spatial discrete equation.
Moreover, they proved that the spreading speeds in the positive and negative directions are identical even if $f(x,u)$ is not invariant with respect to the reflection.

In this paper, we investigate the spreading properties for \eqref{1.1} in general heterogeneous media.
Motivated by \cite{B2}, we establish the theory of generalized principal eigenvalues of linear
nonlocal operator to estimate the lower and upper spreading speeds $\omega_{\ast}, \omega^{\ast}$.
Aiming to estimate the spreading speeds through the principal eigenvalues, we also develop some homogenization techniques for nonlocal dispersal equations.
Then we  prove that $\omega_{\ast}=\omega^{\ast}$  in the case where the media is almost periodic.
Finally, in the case where $f_{s}^{\prime}(\cdot,0)$ is almost periodic and $K(x,y)=K(y,x)$,
we show that the spreading speeds in the positive and negative directions are identical even if  $ f_{s}^{\prime}(x,0)$ is not invariant with respect to the reflection.


\section{Preliminary: Definitions, notions, results}
In this paper, we always assume that $u_{0}\in C(\mathbf{R})$ with compact support and $u_{0}(0)>0$,
and that the reaction term $f$
satisfies $f(x,0)\equiv f(x,1)\equiv0$,
$0<\inf\limits_{x\in\mathbf{R}}f(x,s)\leq f(x,s)\leq f_{s}^{\prime}(x,0)s$
for any $s\in (0,1)$, $f_{s}^{\prime}(\cdot,0)\in C(\mathbf{R})$ and
$f(x,\cdot)\in\mathcal{C}^{1+\gamma}([0,1])$ uniformly with respect to $x\in\mathbf{R}$,
that is, $\sup\limits_{x\in\mathbf{R}}\|f(x,\cdot)\|_{\mathcal{C}^{1+\gamma}}<+\infty$.
Let $K\phi(x):=\int_{\mathbf{R}}K(x,y)\phi(y)dy$ with $K(x,y)\geq0\ \forall(x,y)\in\mathbf{R}^{2}$.
We list the following assumptions for the kernel $K$:\\
(K1) $K:C(\mathbf{R})\to C(\mathbf{R})$, and $\int_{\mathbf{R}}K(\cdot,\cdot-\xi)e^{-p\xi}d\xi\in L^{\infty}(\mathbf{R})\cap C(\mathbf{R})$ for any $p\in\mathbf{R}$.\\
(K2) For any fixed $p\in\mathbf{R}$, $K(x,x-\cdot)e^{-p\cdot}$ is uniformly integrable w.r.t. $x$,
i.e.,
$\forall \sigma>0,\ \exists R>0$ s.t.
$$\sup\limits_{x\in\mathbf{R}}\int_{B_{R}^{c}(0)}K(x,x-\xi)e^{-p\xi}d\xi<\sigma.$$\\
(K3) There exist $\delta_{0}>0$, $\eta_{0}>0$ and
positive constant $C$ depending on $\delta_{0}, \eta_{0}$ s.t.
$K[\phi\chi_{\eta+\delta_{0}}](x)>C\min\limits_{|y|\leq\eta}\phi(y)\ \forall \eta\geq\eta_{0},\
|x|\leq\eta+\delta_{0}$ and $\phi\in C(\mathbf{R})$ with $\phi\geq0$, where
\begin{equation*}
\chi_{r}(x)=\left\{
   \begin{aligned}
   1,\ |x|\leq r,\\
   0,\ |x|>r.\\
   \end{aligned}
   \right.
\end{equation*}\\
(K4) $k(p):=\liminf \limits_{R\to \infty}\inf\limits_{x\in B_{R}^{c}}\{\int_{\mathbf{R}}K(x,x-\xi)e^{-p\xi}d\xi-b(x)+f_{s}^{\prime}(x,0)\}>0\ \forall p\in\mathbf{R},$
and
\begin{equation*}
\displaystyle{\lim_{p\rightarrow+\infty}}
\frac{k(\pm p)}{p}=
\displaystyle{\lim_{p\rightarrow 0^{+}}}
\frac{k(\pm p)}{p}=+\infty,
\end{equation*}
where $b(x)=\int_{R}K(x,y)dy$.\\
(K5) $e^{Kt}u_{0}(x)=\sum\limits_{n=0}^{\infty}\frac{t^{n}K^{n}u_{0}}{n!}(x)>0\ \forall x\in\mathbf{R},\ t>0$.

For any $z\in\mathbf{R}$. Consider space
$$X_{z}=\{\phi\in C(\mathbf{R})|\sup\limits_{x\in\mathbf{R}}e^{-|x-z|}|\phi(x)|<+\infty\}$$
equipped with the norm $\|\phi\|_{z}=\sup\limits_{x\in\mathbf{R}}e^{-|x-z|}|\phi(x)|$. It is easy to
verify that $\int_{\mathbf{R}}K(\cdot,y)\phi(y)dy\in X_{z}$ if $\phi\in X_{z}$ by (K1).
By general nonlinear semigroup theory (see \cite{He} or \cite{P}), \eqref{1.1} has a unique (local) solution $u(t,x)$ with initial value $u(0,x)\in X_{z}$.
Define the linear bounded operator $\mathcal{L}:X_{z}\to X_{z}$ by $\mathcal{L}\phi(x)=K\phi(x)-a(x)\phi(x)$,
where $a(\cdot)=b(\cdot)+f_{s}^{\prime}(\cdot,0)\in L^{\infty}(\mathbf{R})\cap C(\mathbf{R})$.
Moreover, let
$L_{p}\phi(x)=e^{-px}\mathcal{L}(e^{p\cdot}\phi)(x)=e^{-px}\int_{\mathbf{R}}K(x,y)e^{py}\phi(y)dy-a(x)\phi(x).$

\newtheorem{defn}{Definition}[section]
\begin{defn}\label{def2.1}
The generalized principal eigenvalues associated with operator $L_{p}$ on $I_{R}:=(R,+\infty)$, where $R\in\{-\infty\}\cup\mathbf{R}$, are:
$$\underline{{\lambda}_{1}}(p,R):=\sup\{\lambda|\ \exists\ \phi\in{\mathcal{A}},\ \text{s.t.}\
L_{p}\phi(x)\geq \lambda\phi(x)\ \forall\ x\in I_{R}\},$$
$$\overline{{\lambda}_{1}}(p,R):=\inf\{\lambda|\ \exists\ \phi\in{\mathcal{A}},\ \text{s.t.}\
L_{p}\phi(x)\leq \lambda\phi(x)\ \forall\ x\in I_{R}\},$$
where ${\mathcal{A}}$ is a set of admissible test functions:
$${\mathcal{A}}:=\{\phi\in C(R)|0<\inf \limits_{x\in\mathbf{R}}\phi(x)\leq\sup \limits_{x\in\mathbf{R}}\phi(x)<+\infty,\ \phi\ \text{is uniformly continuous}\}.$$
\end{defn}
We use the convention that $\underline{{\lambda}_{1}}(p,R)=-\infty$ if
$\{\lambda|\ \exists\ \phi\in{\mathcal{A}},\ \text{s.t.}\
L_{p}\phi(x)\geq \lambda\phi(x)\ \forall\ x\in I_{R}\}$ is empty and
$\overline{{\lambda}_{1}}(p,R)=+\infty$
if $\{\lambda|\ \exists\ \phi\in{\mathcal{A}},\ \text{s.t.}\
L_{p}\phi(x)\leq \lambda\phi(x)\ \forall\ x\in I_{R}\}$
is empty.
An important relation between $\underline{{\lambda}_{1}}(p,R)$ and $\overline{{\lambda}_{1}}(p,R)$
is that

\newtheorem{prop}{Proposition}[section]
\begin{prop}\label{prop2.1}
Assume that $\int_{\mathbf{R}}K(x,x-\xi)e^{-p\xi}d\xi<+\infty$.
For all $R\in\{-\infty\}\cup\mathbf{R}$, we have
$$\underline{{\lambda}_{1}}(p,R)\leq\overline{{\lambda}_{1}}(p,R)\ \forall p\in\mathbf{R}.$$
\end{prop}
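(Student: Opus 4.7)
The approach is a nonlocal maximum-principle comparison, playing the two candidate test functions against each other. Suppose for contradiction that $\underline{\lambda_1}(p,R)>\overline{\lambda_1}(p,R)$; choose $\lambda_1>\lambda_2$ strictly between them and, by Definition~\ref{def2.1}, pick $\phi_1,\phi_2\in\mathcal{A}$ with $L_p\phi_1\geq\lambda_1\phi_1$ and $L_p\phi_2\leq\lambda_2\phi_2$ on $I_R$. Since $\phi_i\in\mathcal{A}$ are bounded above and below away from $0$, the ratio $v:=\phi_1/\phi_2$ is bounded. The crucial algebraic identity is obtained by substituting $\phi_1=v\phi_2$ in $L_p\phi_1$, the $a(x)$-terms cancelling:
\begin{equation*}
L_p\phi_1(x) = v(x)L_p\phi_2(x) + \int_{\mathbb{R}}K_p(x,y)\phi_2(y)[v(y)-v(x)]\,dy,
\end{equation*}
where $K_p(x,y):=e^{-p(x-y)}K(x,y)\geq 0$.

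Set $M:=\sup_{\mathbb{R}}v$ and suppose a maximizing sequence $x_n\in I_R$ with $v(x_n)\to M$ exists. Then $v(y)-v(x_n)\leq M-v(x_n)\to 0$ uniformly in $y$, and since $\int_{\mathbb{R}}K_p(x_n,y)\phi_2(y)\,dy$ is bounded uniformly in $n$ by (K1), the correction integral is $o(1)$. Combining with the two eigen-inequalities yields
\begin{equation*}
\lambda_1\phi_1(x_n) \leq L_p\phi_1(x_n) \leq v(x_n)L_p\phi_2(x_n)+o(1) \leq \lambda_2\phi_1(x_n)+o(1),
\end{equation*}
and since $\inf\phi_1>0$, passing to the limit forces $\lambda_1\leq\lambda_2$, the contradiction. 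When $R=-\infty$, such a sequence always exists (as $I_R=\mathbb{R}$), and this is the entire proof.

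For finite $R$ the delicate point is that $\sup_{\mathbb{R}}v$ may exceed $\sup_{I_R}v$ and be realized only in $(-\infty,R]$, so no sequence in $I_R$ converges to $M$. One then works with $M_R:=\sup_{I_R}v$: the bracket $v(y)-v(x_n)$ is asymptotically nonpositive for $y\in I_R$ but may exceed $M-M_R$ on $(-\infty,R]$, producing a tail contribution controlled by $\|v\|_\infty\int_{x_n-R}^{\infty}K(x_n,x_n-\xi)e^{-p\xi}\,d\xi$. By the uniform integrability assumption (K2) this tail vanishes whenever $x_n\to+\infty$, which handles the case of a maximizing sequence escaping to infinity. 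The main obstacle is the residual scenario in which every maximizing sequence in $I_R$ remains bounded while $M_R<M$ strictly; this boundary effect at $R$ is the technical heart of the proof, expected to be dealt with via the strict-positivity assumption (K3) or a perturbation of $\phi_1,\phi_2$ that closes the gap $M-M_R$. This difficulty is absent in the local-operator analogue of \cite{B2}, where only pointwise values enter the maximum principle.
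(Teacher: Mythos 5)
Your argument in the case $R=-\infty$ is essentially correct and is close in spirit to the paper's proof (both are ratio/sliding comparison arguments for the nonlocal operator), but for finite $R$ your proposal stops exactly where the real difficulty lies, so as written it does not prove the proposition. The scenario you flag at the end --- every maximizing sequence of $v=\phi_1/\phi_2$ over $I_R$ stays bounded while $\sup_{(-\infty,R]}v$ may be strictly larger --- is not a deferrable edge case: there the contribution of $(-\infty,R]$ to your correction integral has no reason to be small, and neither of the remedies you suggest is available. (K3) is not among the hypotheses of Proposition \ref{prop2.1} (which assumes only $\int_{\mathbf{R}}K(x,x-\xi)e^{-p\xi}d\xi<+\infty$), and no perturbation of $\phi_1,\phi_2$ closing the gap between $\sup_{I_R}v$ and $\sup_{\mathbf{R}}v$ is actually exhibited. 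Likewise, your tail estimate for maximizing sequences escaping to $+\infty$ invokes (K2), which is also not assumed in the proposition.

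The missing idea is the paper's translation-to-infinity step, which is designed precisely to eliminate the boundary at $R$. The paper translates by $R_n\to+\infty$, uses that functions in $\mathcal{A}$ are bounded, bounded away from $0$ and uniformly continuous to apply Arzel\`a--Ascoli, and extracts limits $\phi_\infty,\psi_\infty$ (and $a_\infty$) for which the two eigen-inequalities hold, in $\liminf/\limsup$ form, on all of $\mathbf{R}$; since $I_{R-R_n}$ exhausts $\mathbf{R}$, the half-line constraint disappears in the limit. It then runs the analogue of your comparison on the limit problem: with $\gamma=\inf_{\mathbf{R}}\phi_\infty/\psi_\infty$ and $z=\phi_\infty-\gamma\psi_\infty\geq0$ having infimum $0$, it evaluates the subtracted inequalities at a point where $z$ is nearly zero and bounds the nonlocal term from below by splitting the integral into a large ball, where $z_n\to z\geq0$ locally uniformly, and a tail made small by integrability of the kernel. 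To salvage your direct approach you would need to incorporate this limiting device (or an equivalent one) to handle the bounded-maximizing-sequence case; without it, the finite-$R$ statement --- which is the case actually needed to define $\overline{H}(p)$ and $\underline{H}(p)$ in \eqref{2.2} --- remains unproved.
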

This proposition and Definition \ref{def2.1} yield the following corollary immediately.

\newtheorem{cor}{Corollary}[section]
\begin{cor}\label{cor2.1}
Let the assumptions in Proposition \ref{prop2.1} hold. If there exist $p,\lambda\in\mathbf{R}$, and $\phi\in\mathcal{A}$
such that $L_{p}\phi(x)=\lambda\phi(x)$ $\forall x\in I_{R}$, then
$$\lambda=\underline{{\lambda}_{1}}(p,R)=\overline{{\lambda}_{1}}(p,R).$$
\end{cor}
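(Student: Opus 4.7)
The plan is to chain together the two defining inequalities for $\underline{\lambda_1}(p,R)$ and $\overline{\lambda_1}(p,R)$ with the sandwiching supplied by Proposition \ref{prop2.1}. Since the hypothesis provides a single $\phi \in \mathcal{A}$ that is simultaneously a supersolution and a subsolution with eigenvalue $\lambda$, $\phi$ is automatically admissible in both variational problems.

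First, I would observe that the equality $L_p \phi(x) = \lambda \phi(x)$ on $I_R$ trivially implies the inequality $L_p \phi(x) \geq \lambda \phi(x)$ on $I_R$. Because $\phi \in \mathcal{A}$, $\lambda$ belongs to the set whose supremum defines $\underline{\lambda_1}(p,R)$, so
\[
\lambda \leq \underline{\lambda_1}(p,R).
\]
Symmetrically, the same equality gives $L_p \phi(x) \leq \lambda \phi(x)$ on $I_R$ with the same admissible $\phi$, hence $\lambda$ belongs to the set whose infimum defines $\overline{\lambda_1}(p,R)$, yielding
\[
\overline{\lambda_1}(p,R) \leq \lambda.
\]

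Second, I would invoke Proposition \ref{prop2.1}, whose hypothesis $\int_{\mathbf{R}} K(x,x-\xi)e^{-p\xi}\,d\xi < +\infty$ is in force by assumption, to obtain $\underline{\lambda_1}(p,R) \leq \overline{\lambda_1}(p,R)$. Concatenating the three inequalities gives
\[
\lambda \leq \underline{\lambda_1}(p,R) \leq \overline{\lambda_1}(p,R) \leq \lambda,
\]
which forces equality throughout and finishes the proof.

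There is no real obstacle here: the statement is genuinely a one-line consequence of Definition \ref{def2.1} together with Proposition \ref{prop2.1}. The only thing worth checking carefully is that the admissibility condition $\phi \in \mathcal{A}$ is indeed inherited by \emph{both} the super- and sub-solution variational characterizations (which is immediate, since $\mathcal{A}$ is the common class of test functions), and that the hypothesis needed to apply Proposition \ref{prop2.1} is available, which is ensured because the existence of a bounded $\phi \in \mathcal{A}$ satisfying $L_p \phi = \lambda \phi$ presupposes that $L_p \phi$ is well-defined, hence that $\int_{\mathbf{R}} K(x,x-\xi)e^{-p\xi}\,d\xi < +\infty$ as assumed.
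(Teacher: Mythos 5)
Your proof is correct and is precisely the argument the paper intends: the paper states the corollary as an immediate consequence of Definition \ref{def2.1} and Proposition \ref{prop2.1}, and your chain $\lambda \leq \underline{\lambda_1}(p,R) \leq \overline{\lambda_1}(p,R) \leq \lambda$ is exactly that one-line deduction.
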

We write $\underline{{\lambda}_{1}}(p,R,a(\cdot))$ and $\overline{{\lambda}_{1}}(p,R,a(\cdot))$ to
emphasize that the generalized principal eigenvalues are related to $a(x)$. Then one can use the Definition \ref{def2.1} to verify
\begin{prop}\label{prop2.2}
For any $R\in\{-\infty\}\cup\mathbf{R}$, $p\in\mathbf{R}$, we have
$$|\underline{{\lambda}_{1}}(p,R,a(\cdot))-\underline{{\lambda}_{1}}(p,R,a^{\prime}(\cdot))|
\leq\sup\limits_{x\in I_{R}}\|a^{\prime}-a\|_{{\infty}},$$
$$|\overline{{\lambda}_{1}}(p,R,a(\cdot))-\overline{{\lambda}_{1}}(p,R,a^{\prime}(\cdot))|
\leq\sup\limits_{x\in I_{R}}\|a^{\prime}-a\|_{{\infty}}.$$
\end{prop}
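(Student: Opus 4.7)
The proof essentially rests on one algebraic observation: the two operators $L_p^{a}$ and $L_p^{a'}$ (writing the superscript to emphasize the potential) differ only by a multiplication operator, namely
\[
L_p^{a'}\phi(x) - L_p^{a}\phi(x) = \bigl(a(x) - a'(x)\bigr)\phi(x),
\]
since the kernel piece $e^{-px}\int K(x,y)e^{py}\phi(y)\,dy$ is the same in both. Set $M:=\sup_{x\in I_R}\|a'-a\|_{\infty}$, so that $|a(x)-a'(x)|\leq M$ on $I_R$.

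My plan is to bound each of the two sub/super eigenvalues separately and then invoke symmetry. Fix $\lambda$ in the admissible set for $\underline{\lambda_1}(p,R,a(\cdot))$, with witness $\phi\in\mathcal{A}$ satisfying $L_p^{a}\phi\geq \lambda\phi$ on $I_R$. Using positivity of $\phi$ (guaranteed by $\inf\phi>0$ in the definition of $\mathcal{A}$) and the identity above,
\[
L_p^{a'}\phi(x) = L_p^{a}\phi(x) + \bigl(a(x)-a'(x)\bigr)\phi(x) \geq \lambda\phi(x) - M\phi(x) = (\lambda-M)\phi(x)
\]
on $I_R$. The same $\phi$ therefore certifies that $\lambda-M$ lies in the admissible set for $\underline{\lambda_1}(p,R,a'(\cdot))$, hence $\lambda-M\leq\underline{\lambda_1}(p,R,a'(\cdot))$. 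Taking sup over $\lambda$ yields $\underline{\lambda_1}(p,R,a(\cdot))-M\leq\underline{\lambda_1}(p,R,a'(\cdot))$. Exchanging the roles of $a$ and $a'$ gives the reverse inequality, and together they produce the desired bound for $\underline{\lambda_1}$.

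For $\overline{\lambda_1}$ the argument is symmetric: starting from $\phi\in\mathcal{A}$ with $L_p^{a}\phi\leq\lambda\phi$ on $I_R$, the same computation produces $L_p^{a'}\phi\leq(\lambda+M)\phi$ on $I_R$, so $\overline{\lambda_1}(p,R,a'(\cdot))\leq\lambda+M$; taking infimum over $\lambda$ and swapping $a\leftrightarrow a'$ yields the second inequality. The only conceivable subtlety is that if one of the admissible sets is empty (so that the sup or inf equals $\mp\infty$ by convention), one must check the estimate vacuously or argue that the admissible sets are simultaneously empty up to translation by $M$; but this is immediate because the same test function $\phi$ transfers between the two potentials with only a shift of the eigenvalue parameter by at most $M$, so neither side can be finite while the other is infinite. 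There is no real technical obstacle here — the estimate is a direct consequence of the multiplicative structure of the perturbation and the positivity of admissible test functions.
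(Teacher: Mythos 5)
Your proof is correct and is exactly the direct verification from Definition \ref{def2.1} that the paper has in mind (the paper states the proposition without proof, noting only that it follows from the definition): the perturbation is purely multiplicative, so any test function for one potential certifies a $\sup_{I_R}|a'-a|$-shifted eigenvalue for the other, and taking the sup (resp.\ inf) and swapping $a\leftrightarrow a'$ gives the bound. Your remark on the $\pm\infty$ conventions is a sensible extra check and does not change the argument.
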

It is easy to see that $\underline{{\lambda}_{1}}(p,R)$ is increasing in $R$, and
$\overline{{\lambda}_{1}}(p,R)$ is decreasing in $R$. By Proposition \ref{prop2.1}, one can define:
\begin{equation}\label{2.2}
\overline{H}(p):=\displaystyle{\lim_{R\rightarrow +\infty}\overline{{\lambda}_{1}}(p,R)},\ \text{and}\ \ \underline{H}(p):=\displaystyle{\lim_{R\rightarrow +\infty}\underline{{\lambda}_{1}}(p,R)},\ \forall \ p\in\mathbf{R}.
\end{equation}

\begin{prop}\label{prop2.3}
Assume that (K1) and (K4) hold. The functions $0<\underline{H}(p)\leq\overline{H}(p)$
are locally Lipschitz continuous, and
$$\displaystyle{\lim_{p\rightarrow{0}^{+}}}\frac{\underline{H}(\pm p)}{p}=
\displaystyle{\lim_{p\rightarrow+\infty}}\frac{\underline{H}(\pm p)}{p}=+\infty.$$
\end{prop}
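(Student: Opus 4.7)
The plan has three stages, mirroring the three pieces of the statement.

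Stage 1 (positivity and ordering). I would use the constant test function $\phi\equiv 1\in\mathcal{A}$, for which $L_p 1(x)=\int_{\mathbf{R}}K(x,x-\xi)e^{-p\xi}\,d\xi-a(x)$. Hypothesis (K4) asserts that this quantity exceeds $k(p)-\epsilon$ for every $x\in B_R^c$ once $R$ is large; since $I_R\subset B_R^c$ whenever $R>0$, this forces $\underline{\lambda_1}(p,R)\geq k(p)-\epsilon$, and letting $R\to\infty$ and then $\epsilon\to 0$ yields $\underline{H}(p)\geq k(p)>0$. The ordering $\underline{H}(p)\leq\overline{H}(p)$ follows from Proposition~\ref{prop2.1} combined with the monotonicity of $\underline{\lambda_1}(p,\cdot)$ (increasing) and $\overline{\lambda_1}(p,\cdot)$ (decreasing) in $R$: for any $R_1,R_2$,
$$\underline{\lambda_1}(p,R_1)\leq\overline{\lambda_1}(p,\min(R_1,R_2))\leq\overline{\lambda_1}(p,R_2),$$
so $\underline{H}=\sup_R\underline{\lambda_1}\leq\inf_R\overline{\lambda_1}=\overline{H}$.

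Stage 2 (local Lipschitz continuity). I would compare $L_{p_1}$ and $L_{p_2}$ on a common test function and exploit a weighted moment bound. From
$$(L_{p_2}-L_{p_1})\phi(x)=\int_{\mathbf{R}}K(x,x-\xi)\bigl(e^{-p_2\xi}-e^{-p_1\xi}\bigr)\phi(x-\xi)\,d\xi$$
and the mean value estimate $|e^{-p_2\xi}-e^{-p_1\xi}|\leq|p_2-p_1|\,|\xi|\,(e^{-p_1\xi}+e^{-p_2\xi})$, the matter reduces to bounding $\int K(x,x-\xi)|\xi|e^{-p\xi}\,d\xi$ uniformly in $x$ for $p$ in any compact interval $[-P,P]$. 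Here the elementary inequality $|\xi|\leq\epsilon^{-1}e^{\epsilon|\xi|}$ promotes the finite exponential moments provided by (K1) into this finite $|\xi|$-weighted one. Feeding a near-optimal $\phi\in\mathcal{A}$ for $\underline{\lambda_1}(p_1,R)$ into the resulting estimate yields $L_{p_2}\phi\geq(\underline{\lambda_1}(p_1,R)-\epsilon)\phi-C|p_2-p_1|\sup\phi$ pointwise. Interchanging $p_1$ and $p_2$ and then letting $R\to\infty$ yields local Lipschitz continuity of $\underline{H}$ and, by the symmetric argument, of $\overline{H}$.

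Stage 3 (asymptotic blow-up). Since $\underline{H}(\pm p)\geq k(\pm p)$ from stage~1, and (K4) asserts that $k(\pm p)/p\to+\infty$ as $p\to 0^+$ and as $p\to+\infty$, the conclusion $\lim\underline{H}(\pm p)/p=+\infty$ in both regimes is immediate.

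The hard part will be the passage from the pointwise inequality in stage~2 to an eigenvalue-style inequality $L_{p_2}\phi\geq(\underline{\lambda_1}(p_1,R)-\epsilon-C'|p_2-p_1|)\phi$: since the admissible class $\mathcal{A}$ controls $\sup\phi$ and $\inf\phi$ separately but not their ratio, dividing by $\phi$ would introduce an uncontrollable factor $\sup\phi/\inf\phi$. I expect to have to replace the near-optimal $\phi$ by a modified test function---for instance $\phi+c$ with $c$ of order $|p_2-p_1|$, or a truncation at appropriately chosen levels---whose oscillation ratio is uniformly bounded while the near-eigenvalue property is preserved up to a negligible loss, this being the genuinely nontrivial technical core of the proposition.
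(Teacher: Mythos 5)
Your Stages 1 and 3 are exactly what the paper does: it takes $\phi\equiv1$ as a test function, so that (K4) gives $\underline{H}(p)\geq k(p)>0$ together with both limits, and the ordering $\underline{H}\leq\overline{H}$ is immediate from Proposition \ref{prop2.1} and the monotonicity in $R$. The genuine gap is in Stage 2, and it is precisely the point you flag at the end without resolving. The kernel-difference estimate only yields $L_{p_2}\phi\geq(\underline{\lambda}_{1}(p_1,R)-\epsilon)\phi-C|p_2-p_1|\sup\phi$, and since $\mathcal{A}$ puts no bound on $\sup\phi/\inf\phi$, this does not produce $\underline{\lambda}_{1}(p_2,R)\geq\underline{\lambda}_{1}(p_1,R)-\epsilon-C'|p_2-p_1|$. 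The repairs you sketch do not close it: adding a constant $c$ of order $|p_2-p_1|$ cannot absorb an error of size $|p_2-p_1|\sup\phi$ unless $c$ is comparable to $\sup\phi$, and with $c\sim\sup\phi$ the shifted function satisfies the eigenvalue inequality only up to an error of order $\underline{\lambda}_{1}(p_1,R)-\inf_{x\in I_R}L_{p_1}1(x)$, which is $O(1)$ rather than $O(|p_1-p_2|)$; truncation suffers from the same defect. So the local Lipschitz continuity, which is the only nontrivial assertion of the proposition, is not established by the proposal as written.

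The paper avoids the oscillation ratio altogether by a convexity (H\"older) interpolation rather than an additive perturbation of the operator. Lemma \ref{lem3.1} gives the pointwise inequality $\frac{L_{\alpha p+\beta q}\phi^{\alpha}\psi^{\beta}}{\phi^{\alpha}\psi^{\beta}}(x)\leq\alpha\frac{L_{p}\phi}{\phi}(x)+\beta\frac{L_{q}\psi}{\psi}(x)$, and the lemma following Corollary \ref{cor3.1} applies it with $\psi=\phi_{2}$ a near-optimal test function at $p_{2}$, $\phi\equiv1$, $\beta=\frac{1}{1+|p_{1}-p_{2}|}$, $\alpha=\frac{|p_{1}-p_{2}|}{1+|p_{1}-p_{2}|}$, and $p=p_{1}+\mathrm{sgn}(p_{1}-p_{2})$, so that $\alpha p+\beta p_{2}=p_{1}$. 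The new test function $\phi_{2}^{\beta}$ is again in $\mathcal{A}$, and the error term is $\alpha\,L_{p}1(x)\leq C_{0}|p_{1}-p_{2}|$ uniformly in $x$ and in $R$ by (K1); no ratio $\sup\phi/\inf\phi$ ever appears, because the comparison is done multiplicatively through the power $\phi_{2}^{\beta}$ (equivalently, through convexity in the variable $u=\ln\phi$) rather than through a difference of kernels acting on the same $\phi$. A symmetric argument with a near-optimal subsolution handles $\underline{\lambda}_{1}$, and $R$-uniformity lets one pass to $\underline{H},\overline{H}$. If you want to keep your framework, this geometric-mean device is the missing idea; your moment estimate for $\int K(x,x-\xi)|\xi|e^{-p\xi}d\xi$ is fine but by itself cannot finish the proof.
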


Now, as in \cite{B2} we can define the speeds $\underline{\omega}$ and $\overline{\omega}$:
\begin{equation}\label{2.3}
\underline{\omega}:=\min \limits_{p>0}\frac{\underline{H}(-p)}{p},\  \text{and}\
\overline{\omega}:=\min \limits_{p>0}\frac{\overline{H}(-p)}{p}.
\end{equation}

\newtheorem{rem}{Remark}[section]
\begin{rem}\label{re2.1}
If we set $K^{-}(x,y)=K(-x,-y), f^{-}(x,u)=f(-x,u)$, then, as we did before, one can still define
$\underline{{\lambda}^{-}_{1}}(p,R),\overline{{\lambda}^{-}_{1}}(p,R),\
\underline{H}^{-}(p),\overline{H}^{-}(p),\underline{\omega}^{-} \text{and}\ \overline{\omega}^{-}$
associated with $\mathcal{L}^{-}$, where $\mathcal{L}^{-}:X_{z}\to X_{z}$ is defined by
$\mathcal{L}^{-}\phi(x)=K^{-}\phi(x)-a^{-}(x)\phi(x)$ with
$K^{-}\phi(x)=\int_{\mathbf{R}}K^{-}(x,y)\phi(y)dy, a^{-}(x)=b(-x)+f^{\prime}(-x,0)$.
Moreover, we have
$\underline{{\lambda}^{-}_{1}}(p,R)\leq\overline{{\lambda}^{-}_{1}}(p,R),\
0<\underline{H}^{-}(p)\leq\overline{H}^{-}(p)$ and
$0<\underline{\omega}^{-}\leq\overline{\omega}^{-}$.
\end{rem}

The main result of this paper is as following:
\newtheorem{thm}{Theorem}[section]
\begin{thm}\label{thm2.1}
Let $u(t,x)$ be a solution of \eqref{1.1}. Then:\\
1)\ For all $\omega>\overline{\omega}$, $\displaystyle{\lim_{t\rightarrow+\infty}}\sup \limits_{x\geq\omega t}|u(t,x)|=0$ provided that (K1) and (K4) hold;\\
2)\ For all $0\leq\omega<\underline{\omega}$, $\displaystyle{\lim_{t\rightarrow+\infty}}\sup \limits_{0\leq x\leq\omega t}|u(t,x)-1|=0$ provided that (K1)-(K5) hold.
\end{thm}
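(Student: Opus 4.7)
The plan relies on the KPP condition $f(x,u) \le f'_s(x,0)u$ together with its matching lower bound near $s=0$, which reduces the analysis to the linearization $\mathcal{L}$. For part~(1), fix $\omega > \overline\omega$ and use \eqref{2.3} to choose $p > 0$ and $\lambda$ with $\overline H(-p) < \lambda < p\omega$. By \eqref{2.2} there is $R$ large so that $\overline{\lambda_1}(-p,R) < \lambda$, and by Definition~\ref{def2.1} a test function $\phi \in \mathcal{A}$ with $L_{-p}\phi \le \lambda \phi$ on $I_R$. A direct computation gives $\mathcal{L}\bigl(e^{-p\cdot}\phi\bigr)(x) = e^{-px}L_{-p}\phi(x) \le \lambda e^{-px}\phi(x)$ on $I_R$, so $v(t,x) := M e^{\lambda t - px}\phi(x)$ satisfies $v_t \ge \mathcal{L}v$ there. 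For $M$ large enough, $v(0,\cdot) \ge u_0$ on $\mathbf{R}$ thanks to compactness of $\operatorname{supp} u_0$ and $\inf \phi > 0$. To upgrade the supersolution inequality to all of $\mathbf{R}$, modify $\phi$ on a bounded set (or add a correction $\varepsilon e^{\mu t}$ with $\mu$ dominating $\|a\|_\infty$), exploiting that $a$ is bounded. The comparison principle then yields $u(t,x) \le Me^{\lambda t - px}\|\phi\|_\infty$; restricting to $x \ge \omega t$ gives $u(t,x) \le M\|\phi\|_\infty e^{(\lambda - p\omega)t} \to 0$ since $\lambda < p\omega$.

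For part~(2), the strategy is symmetric but genuinely harder. Fix $0 \le \omega < \underline\omega$, pick $p > 0$ and $\lambda$ with $p\omega < \lambda < \underline H(-p)$, then $R$ with $\underline{\lambda_1}(-p,R) > \lambda$, and $\phi \in \mathcal{A}$ with $L_{-p}\phi \ge \lambda \phi$ on $I_R$. Two preparatory steps are needed. First, a hair-trigger lemma from (K3) and (K5): for every compact $J$ there exist $\eta_J > 0$ and $T_J$ with $u(t,x) \ge \eta_J$ for $t \ge T_J$ and $x \in J$, obtained by iterating positivity of $K$ on expanding sets together with $f > 0$ on $(0,1)$. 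Second, the KPP bound gives $\delta > 0$ such that $f(x,s) \ge (f'_s(x,0) - \varepsilon)s$ for $0 \le s \le \delta$, where $\varepsilon$ is small enough that $\lambda - \varepsilon > p\omega$; any positive function below $\delta$ solving $w_t \le \mathcal{L}w - \varepsilon w$ is then a subsolution of \eqref{1.1}. Build a truncated moving-window subsolution $w(t,x) = \kappa e^{(\lambda - \varepsilon)t - p(x - y_0)}\phi(x)$, either via a two-term trick (subtract $C e^{\alpha t - p_1(x - y_0)}\phi_1(x)$ with a slightly steeper $p_1 > p$ so that the combination is $\le 0$ outside a bounded window) or via a compactly supported approximate eigenpair on a large box obtained by homogenization. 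The hair-trigger bound lets us slot $w(t_0,\cdot) \le u(t_0,\cdot)$ at a sufficiently late time $t_0$ and well-chosen $y_0$; comparison then propagates the lower bound to the moving frame $x = \omega t$ for all large $t$. A final hair-trigger on moving frames upgrades the uniform lower bound $\kappa'>0$ to convergence to $1$, using again that $f>0$ on $(0,1)$ and comparison with an auxiliary stationary problem.

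The principal obstacle is the subsolution construction. Classical principal eigenfunctions of $L_{-p}$ on a half-line generally do not exist for nonlocal operators in fully heterogeneous media, and admissible test functions in $\mathcal{A}$ are only required to be bounded and uniformly continuous, hence need not be compactly supported or decaying. This is precisely where the homogenization techniques announced in the introduction become essential: one approximates the coefficients by (asymptotically) almost-periodic proxies on large boxes, solves the resulting periodic eigenvalue problem, and passes to the limit using the uniform integrability (K2) and the uniform positivity (K3) of the dispersal. Keeping the resulting subsolution pointwise below the KPP threshold $\delta$ throughout a long time–space trajectory while still spreading at a speed arbitrarily close to $\underline\omega$ is the technical heart of the argument.
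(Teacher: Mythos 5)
Your part (1) follows the paper's route (an exponential supersolution built from a test function for $\overline{\lambda_1}(-p,R)$), but the gluing step as you describe it does not work. The inequality $L_{-p}\phi\le\lambda\phi$ holds only on $I_R=(R,+\infty)$, so the region where $\psi(t,x)=Me^{\lambda t-px}\phi(x)$ may fail to be a supersolution is the \emph{half-line} $(-\infty,R]$, not a bounded set; there $\psi$ grows like $e^{-px}$ as $x\to-\infty$, so neither a modification of $\phi$ on a bounded set nor a spatially constant correction $\varepsilon e^{\mu t}$ can absorb the deficit $\bigl(\int K(x,y)e^{-p(y-x)}\tfrac{\phi(y)}{\phi(x)}dy-a(x)-\lambda\bigr)\psi(x)$, which is unbounded in $x$ on that half-line. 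The paper's fix is different and essential: after scaling so that $\psi\ge1$ on $(-\infty,R]$ and $\psi(0,\cdot)\ge u_0$, it compares $u$ with $v=\min\{1,\psi\}$ in the \emph{nonlinear} equation, using $u\le1$, $f(x,1)=0$ and the KPP bound $f(x,v)\le f_s'(x,0)v$ to verify the supersolution inequality (in the left-derivative sense, Remark \ref{re4.1}) on all of $\mathbf{R}$. Without this truncation at the steady state $1$ (or an equivalent device), your comparison is not justified.

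For part (2) the gap is more serious: the mechanism by which $\underline H$ forces spreading at speed $\underline\omega$ is never actually supplied. The two-exponential subsolution you suggest needs, for the steeper exponent $p_1$, an \emph{upper} test function with a controlled eigenvalue, i.e.\ $\overline{\lambda_1}(-p_1,R)$; since in general heterogeneous media $\overline H\neq\underline H$, this construction does not yield the speed $\underline\omega$, which is precisely why the paper (following \cite{B2}) abandons direct subsolutions. Your alternative ("almost-periodic proxies on large boxes, solve the periodic eigenvalue problem, pass to the limit") is not the paper's homogenization and is left without any argument; Section 5's almost-periodic analysis is used only to prove $\underline\omega=\overline\omega$ there, not Theorem \ref{thm2.1}. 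What the paper actually does is a WKB/viscosity-solution argument: set $z_\varepsilon=\varepsilon\ln u(t/\varepsilon,x/\varepsilon)$, prove locally uniform bounds (Theorem \ref{thm4.1}, via the iteration of Lemma \ref{lem4.3} based on (K3), with positivity from (K5) via Lemma \ref{lem4.2}), show $z_\ast=\liminf z_\varepsilon$ is a viscosity supersolution of $\max\{\partial_t z_\ast-\underline H(\partial_x z_\ast),z_\ast\}\ge0$ (Lemma \ref{lem4.5}, where the test functions realizing $\underline{\lambda_1}(p,R)$ enter as correctors $\varepsilon\ln\psi(x/\varepsilon)$), deduce $z_\ast\ge\min\{-t\,\underline H^{*}(-x/t),0\}$ (Lemma \ref{lem4.6}), convert interior points of $\{z_\ast=0\}$ into a uniform positive lower bound for $u$ along rays (Lemma \ref{lem4.4} and Step 2), and finally lift this to convergence to $1$ on the moving sector using Corollary \ref{cor4.1} (ODE comparison, $\inf_x f(x,s)>0$) together with the weighted-norm localization estimate of Lemma \ref{lem4.7} — not a stationary auxiliary problem. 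Your hair-trigger lemma and final upgrade are in the right spirit, but without a working replacement for Lemmas \ref{lem4.4}--\ref{lem4.6} (or a genuinely justified subsolution spreading at speed arbitrarily close to $\underline\omega$), the proposal does not prove part (2).
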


\section{Properties of generalized principal eigenvalues}

\begin{proof}[Proof of Proposition \ref{prop2.1}]
Assume by contradiction that
$\underline{{\lambda}_{1}}(p,R)>\overline{{\lambda}_{1}}(p,R)$. Then there exist $\lambda\in\mathbf{R}$ and
$\varepsilon>0$ such that
$\underline{{\lambda}_{1}}(p,R)>\lambda>\lambda-2\varepsilon>\overline{{\lambda}_{1}}(p,R)$,
and $\phi,\psi\in{\mathcal{A}}$ such that
\begin{equation*}
\left\{
   \begin{aligned}
   \int_{\mathbf{R}}K(x,y)e^{p(y-x)}\psi(y)dy-a(x)\psi(x)\geq\lambda\psi(x),\ x\in I_{R},\\
   \int_{\mathbf{R}}K(x,y)e^{p(y-x)}\phi(y)dy-a(x)\phi(x)\leq(\lambda-2\varepsilon)\phi(x),\ x\in I_{R}.\\
   \end{aligned}
   \right.
\end{equation*}
Therefore,
\begin{equation}\label{3.1}
\left\{
   \begin{aligned}
   \int_{\mathbf{R}}K(x+R_{n},y+R_{n})e^{p(y-x)}\psi_{n}(y)dy-a_{n}(x)\psi_{n}(x)\geq\lambda\psi_{n}(x),\ x\in I_{R-R_{n}},\\
   \int_{\mathbf{R}}K(x+R_{n},y+R_{n})e^{p(y-x)}\phi_{n}(y)dy-a_{n}(x)\phi_{n}(x)\leq(\lambda-2\varepsilon)\phi_{n}(x),\ x\in I_{R-R_{n}},\\
   \end{aligned}
   \right.
\end{equation}
where $\psi_{n}(x)=\psi(x+R_{n}),\phi_{n}(x)=\phi(x+R_{n}), a_{n}(x)=a(x+R_{n})$, and $R_{n}\to+\infty$ as $n\to\infty$.
Then by Arzel¨¤-Ascoli Theorem, there exists a subsequence (still denoted by $R_{n}$) and
bounded continuous  functions $\psi_{\infty}, \phi_{\infty}$ such that
$\psi_{n}\to\psi_{\infty}, \phi_{n}\to\phi_{\infty}$ locally uniform.
For any fixed $x\in\mathbf{R}$, there exists a subsequence $\{R_{n}(x)\}\subset\{R_{n}\}$ such that
$a(x+R_{n}(x))\to a_{\infty}(x)$.
Hence
\begin{equation}\label{3.1a}
\liminf\limits_{n\to+\infty}\int_{\mathbf{R}}K(x+R_{n}(x),y+R_{n}(x))e^{p(y-x)}\psi_{n}(y)dy
-a_{\infty}(x)\psi_{\infty}(x)\geq\lambda\psi_{\infty}(x),\ x\in\mathbf{R},
\end{equation}
\begin{equation}\label{3.1b}
\limsup\limits_{n\to+\infty}\int_{\mathbf{R}}K(x+R_{n}(x),y+R_{n}(x))e^{p(y-x)}\phi_{n}(y)dy
-a_{\infty}(x)\phi_{\infty}(x)\leq(\lambda-2\varepsilon)\phi_{\infty}(x),\ x\in\mathbf{R}.
\end{equation}
Let
$\gamma=\inf \limits_{x\in\mathbf{R}}\frac{\phi_{\infty}(x)}{\psi_{\infty}(x)}\geq\frac{\inf \limits_{x\in\mathbf{R}}\phi(x)}{\sup \limits_{x\in\mathbf{R}}\psi(x)}>0$,
$z_{n}=\phi_{n}-\gamma\psi_{n}$, and
$z=\phi_{\infty}-\gamma\psi_{\infty}\geq0$. Then $z_{n}\to z_{\infty}$ locally uniform, and
$\inf \limits_{x\in\mathbf{R}}z_{\infty}(x)=0$. \eqref{3.1b}-\eqref{3.1a} yields that
\begin{equation}\label{3.2}
\begin{split}
\limsup\limits_{n\to+\infty}\int_{\mathbf{R}}K(x+R_{n}(x),y+R_{n}(x))e^{p(y-x)}z_{n}(y)dy
&\leq\limsup\limits_{n\to+\infty}\int_{\mathbf{R}}K(x+R_{n}(x),y+R_{n}(x))e^{p(y-x)}\phi_{n}(y)dy\\
&\quad-\liminf\limits_{n\to+\infty}\int_{\mathbf{R}}K(x+R_{n}(x),y+R_{n}(x))e^{p(y-x)}\gamma\psi_{n}(y)dy\\
&\leq(\lambda+a_{\infty}(x))z(x)-2\varepsilon\phi_{\infty}(x)\\
&\leq(\lambda+a_{\infty}(x))z(x)-2\varepsilon\inf \limits_{x\in\mathbf{R}}\phi(x).
\end{split}
\end{equation}
Choose $x_{0}$ such that
$(\lambda+a_{\infty}(x_{0}))z(x_{0})\leq\varepsilon\inf \limits_{x\in\mathbf{R}}\phi(x)$ since
$a_{\infty}\in L^{\infty}$ and $\inf \limits_{x\in\mathbf{R}}z_{\infty}(x)=0$. Then, at $x_{0}$,
the right hand side of \eqref{3.2} $\leq-\varepsilon\inf \limits_{x\in\mathbf{R}}\phi(x)$.
On the other hand, there exists $R>0$ depending on $x_{0}$ such that
$\int_{B^{c}_{R}(x_{0})}K(x_{0}+R_{n}(x_{0}),y+R_{n}(x_{0}))e^{p(y-x_{0})}z_{n}(y)dy
\leq\frac{1}{2}\varepsilon\inf \limits_{x\in\mathbf{R}}\phi(x)$ since $z_{n}$ is bounded.
Hence at $x_{0}$, we have
\begin{equation*}
\begin{split}
\text{LHS of}\ \eqref{3.2}
&\geq\limsup\limits_{n\to+\infty}\int_{B_{R}(x_{0})}K(x_{0}+R_{n}(x_{0}),y+R_{n}(x_{0}))
e^{p(y-x_{0})}z_{n}(y)dy-\frac{1}{2}\varepsilon\inf \limits_{x\in\mathbf{R}}\phi(x)\\
&\geq\limsup\limits_{n\to+\infty}\int_{B_{R}(x_{0})}-K(x_{0}+R_{n}(x_{0}),y+R_{n}(x_{0}))
e^{p(y-x_{0})}dy\sup\limits_{y\in B_{R}(x_{0})}|z_{n}(y)|-\frac{1}{2}\varepsilon\inf \limits_{x\in\mathbf{R}}\phi(x)\\
&\geq-\frac{1}{2}\varepsilon\inf \limits_{x\in\mathbf{R}}\phi(x).
\end{split}
\end{equation*}
which contradicts RHS of \eqref{3.2} $\leq-\varepsilon\inf \limits_{x\in\mathbf{R}}\phi(x)$!
\end{proof}

Next we will prove that $\overline{H}(p)$ and $\underline{H}(p)$ are locally Lipschitz continuous by showing
that $\underline{{\lambda}_{1}}(p,R)$ and $\overline{{\lambda}_{1}}(p,R)$ are locally Lipschitz continuous with
respect to $p$ uniformly in $R\in\mathbf{R}$.

\begin{lemma}\label{lem3.1}
For any $\alpha,\beta\in(0,1), \alpha+\beta=1$, $p,q\in\mathbf{R}$, and $\phi(x)>0, \psi(x)>0$ for any
$x\in\mathbf{R}$ with $\lim\limits_{|x|\to+\infty}\frac{\ln\phi(x)}{|x|}<+\infty$,
$\lim\limits_{|x|\to+\infty}\frac{\ln\psi(x)}{|x|}<+\infty$, we have
 \begin{equation}\label{l3.1.1}
 \frac{(L_{\alpha p+\beta q}{\phi}^{\alpha}{\psi}^{\beta})}{{\phi}^{\alpha}{\psi}^{\beta}}(x)
\leq\alpha\frac{(L_{p}\phi)}{{\phi}}(x)+\beta\frac{(L_{q}\psi)}{{\psi}}(x),\ x\in\mathbf{R}.
 \end{equation}
\end{lemma}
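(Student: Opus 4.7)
The plan is to unfold the definition of $L_p$ and reduce the inequality to a pointwise application of Hölder's inequality followed by weighted AM--GM.

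First I would observe that, since $\alpha+\beta=1$, the $a(x)$ term contributes $-\alpha a(x)-\beta a(x)=-a(x)$ to the right-hand side and $-a(x)$ to the left-hand side, so it cancels and we only need to prove the analogous inequality for the integral part. Setting $u(y):=e^{py}\phi(y)$ and $v(y):=e^{qy}\psi(y)$, the integrand on the left becomes $K(x,y)u(y)^{\alpha}v(y)^{\beta}$, which we rewrite as $\bigl(K(x,y)u(y)\bigr)^{\alpha}\bigl(K(x,y)v(y)\bigr)^{\beta}$ using the nonnegativity of $K$.

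Next I would apply Hölder's inequality in $y$ with conjugate exponents $1/\alpha$ and $1/\beta$ to obtain
\begin{equation*}
\int_{\mathbf{R}} K(x,y) e^{(\alpha p+\beta q)y}\phi^{\alpha}(y)\psi^{\beta}(y)\,dy
\leq \left(\int_{\mathbf{R}} K(x,y)e^{py}\phi(y)\,dy\right)^{\alpha}\left(\int_{\mathbf{R}} K(x,y)e^{qy}\psi(y)\,dy\right)^{\beta}.
\end{equation*}
Multiplying by $e^{-(\alpha p+\beta q)x}\phi^{-\alpha}(x)\psi^{-\beta}(x)$ and distributing the exponent factors into the two brackets gives
\begin{equation*}
\frac{e^{-(\alpha p+\beta q)x}}{\phi^{\alpha}(x)\psi^{\beta}(x)}\int K(x,y)e^{(\alpha p+\beta q)y}\phi^{\alpha}\psi^{\beta}\,dy
\leq X(x)^{\alpha}Y(x)^{\beta},
\end{equation*}
where $X(x):=\phi(x)^{-1}e^{-px}\int K(x,y)e^{py}\phi(y)\,dy$ and $Y(x):=\psi(x)^{-1}e^{-qx}\int K(x,y)e^{qy}\psi(y)\,dy$. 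Finally, the weighted AM--GM inequality $X^{\alpha}Y^{\beta}\leq \alpha X+\beta Y$ (valid for $X,Y\geq 0$) together with the cancellation of the $a(x)$ terms yields precisely \eqref{l3.1.1}.

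The only delicate point is making sure all the integrals in question are finite so that Hölder's inequality is genuinely applicable; this is where the hypotheses $\limsup_{|x|\to\infty}|x|^{-1}\ln\phi(x)<+\infty$ and $\limsup_{|x|\to\infty}|x|^{-1}\ln\psi(x)<+\infty$ enter, combined with (K1), which ensures $\int K(x,x-\xi)e^{-r\xi}\,d\xi\in L^{\infty}$ for every real $r$: the growth bounds on $\phi$ and $\psi$ guarantee exponential majorants, which can then be absorbed into a slightly larger exponent in (K1). I expect this bookkeeping on integrability (in particular choosing the appropriate $r$ so that the Hölder factors are finite at each $x$) to be the only part requiring any care; the rest is two classical inequalities in sequence.
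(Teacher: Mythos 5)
Your proof is correct and is essentially the paper's own argument: both rewrite the integrand as $\bigl(K(x,y)e^{p(y-x)}\phi(y)/\phi(x)\bigr)^{\alpha}\bigl(K(x,y)e^{q(y-x)}\psi(y)/\psi(x)\bigr)^{\beta}$, apply H\"older's inequality with exponents $1/\alpha,1/\beta$, then the weighted AM--GM (Young) inequality, and use $\alpha+\beta=1$ to cancel the $a(x)$ terms. Your extra remark on integrability via (K1) and the exponential growth bounds is a reasonable bookkeeping point that the paper leaves implicit.
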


\begin{proof}
\begin{equation*}
\begin{split}
\frac{\int_{\mathbf{R}}K(x,y)e^{(\alpha p+\beta q)(y-x)}\phi^{\alpha}(y)\psi^{\beta}(y)dy}
{\phi^{\alpha}(x)\psi^{\beta}(x)}
&=\int_{\mathbf{R}}(K(x,y)e^{p(y-x)}\frac{\phi(y)}{\phi(x)})^{\alpha}
(K(x,y)e^{q(y-x)}\frac{\psi(y)}{\psi(x)})^{\beta}dy\\
&\leq\big(\int_{\mathbf{R}}K(x,y)e^{p(y-x)}\frac{\phi(y)}{\phi(x)}dy\big)^{\alpha}
\big(\int_{\mathbf{R}}K(x,y)e^{p(y-x)}\frac{\psi(y)}{\psi(x)}dy\big)^{\beta}\\
&\leq\alpha\int_{\mathbf{R}}K(x,y)e^{p(y-x)}\frac{\phi(y)}{\phi(x)}dy+
\beta\int_{\mathbf{R}}K(x,y)e^{p(y-x)}\frac{\psi(y)}{\psi(x)}dy.
\end{split}
\end{equation*}
Hence
$$\frac{(L_{\alpha p+\beta q}{\phi}^{\alpha}{\psi}^{\beta})}{{\phi}^{\alpha}{\psi}^{\beta}}
\leq\alpha\frac{(L_{p}\phi)}{{\phi}}+\beta\frac{(L_{q}\psi)}{{\psi}}.$$
\end{proof}

One can easily obtain the following corollary by the definition of $\overline{{\lambda}_{1}}(p,R)$ immediately.
\begin{cor}\label{cor3.1}
For any $R\in\{-\infty\}\cup\mathbf{R}$. $\overline{{\lambda}_{1}}(p,R)$ is convex with respect to $p$. i.e.,
$$\alpha\overline{{\lambda}_{1}}({p}_{1},R)+\beta\overline{{\lambda}_{1}}({p}_{2},R)
\geq\overline{{\lambda}_{1}}(\alpha{p}_{1}+\beta{p}_{2},R).$$
\end{cor}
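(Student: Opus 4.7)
The plan is to deduce the corollary directly from Lemma~\ref{lem3.1} by constructing an admissible test function for $L_{\alpha p_{1}+\beta p_{2}}$ out of near-optimizers for $L_{p_{1}}$ and $L_{p_{2}}$. First I would dispose of the trivial case: if $\overline{\lambda_{1}}(p_{1},R)=+\infty$ or $\overline{\lambda_{1}}(p_{2},R)=+\infty$, the inequality holds automatically by the convention stated after Definition~\ref{def2.1}. So assume both quantities are finite.

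Fix $\varepsilon>0$. By the definition of $\overline{\lambda_{1}}(p,R)$ as an infimum, there exist $\phi,\psi\in\mathcal{A}$ such that
\begin{equation*}
L_{p_{1}}\phi(x)\leq\bigl(\overline{\lambda_{1}}(p_{1},R)+\varepsilon\bigr)\phi(x),\qquad
L_{p_{2}}\psi(x)\leq\bigl(\overline{\lambda_{1}}(p_{2},R)+\varepsilon\bigr)\psi(x)
\end{equation*}
for every $x\in I_{R}$. Since $\phi,\psi\in\mathcal{A}$ are bounded away from $0$ and $\infty$, they certainly satisfy the growth hypothesis of Lemma~\ref{lem3.1}, so I may form $\Phi:=\phi^{\alpha}\psi^{\beta}$ and apply \eqref{l3.1.1}:
\begin{equation*}
\frac{L_{\alpha p_{1}+\beta p_{2}}\Phi}{\Phi}(x)
\leq \alpha\,\frac{L_{p_{1}}\phi}{\phi}(x)+\beta\,\frac{L_{p_{2}}\psi}{\psi}(x)
\leq \alpha\,\overline{\lambda_{1}}(p_{1},R)+\beta\,\overline{\lambda_{1}}(p_{2},R)+\varepsilon
\end{equation*}
for every $x\in I_{R}$.

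Next I would check that $\Phi\in\mathcal{A}$. Because $\phi$ and $\psi$ each satisfy $0<\inf\phi\leq\sup\phi<+\infty$ and similarly for $\psi$, the product $\Phi$ inherits the same two-sided bounds. Uniform continuity of $\Phi$ follows from the uniform continuity of $\phi,\psi$ together with the fact that $x\mapsto x^{\alpha}$ is uniformly continuous on any compact interval of $(0,\infty)$ and the ranges of $\phi$ and $\psi$ are contained in such a compact interval. Hence $\Phi$ is an admissible test function certifying
\begin{equation*}
\overline{\lambda_{1}}(\alpha p_{1}+\beta p_{2},R)\leq
\alpha\,\overline{\lambda_{1}}(p_{1},R)+\beta\,\overline{\lambda_{1}}(p_{2},R)+\varepsilon.
\end{equation*}
Letting $\varepsilon\to 0$ yields the claimed convexity.

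The only real content is Lemma~\ref{lem3.1}, which has already been established via H\"older's inequality; after that, the proof is essentially an exercise in tracking definitions. The main potential pitfall is the verification that $\Phi=\phi^{\alpha}\psi^{\beta}\in\mathcal{A}$, but as sketched this is straightforward once one uses that $\mathcal{A}$ forces $\phi,\psi$ to take values in a compact subinterval of $(0,\infty)$ on which $t\mapsto t^{\alpha}$ and $t\mapsto t^{\beta}$ are Lipschitz.
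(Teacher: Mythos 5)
Your proof is correct and is exactly the argument the paper intends: the paper leaves Corollary~\ref{cor3.1} as an immediate consequence of Lemma~\ref{lem3.1} applied to near-optimal test functions, and your write-up (including the verification that $\phi^{\alpha}\psi^{\beta}\in\mathcal{A}$ and the trivial $+\infty$ case) fills in precisely those routine details.
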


\begin{lemma}
Assume that (K1) holds. $\underline{{\lambda}_{1}}(p,R)$ and
$\overline{{\lambda}_{1}}(p,R)$ are locally Lipschitz continuous in $p$ and
the Lipschitz constant is independent of $R\in\{-\infty\}\cup\mathbf{R}$.
\end{lemma}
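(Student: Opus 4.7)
My plan is to reduce everything to a uniform pointwise bound on the operator difference $L_{p+h}-L_p$, then exploit the convexity already proved in Corollary~\ref{cor3.1} for $\overline{\lambda_1}$, and handle $\underline{\lambda_1}$ by a more delicate perturbation of test functions.

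First I would establish the basic operator bound: on every compact $p$-interval there is a constant $C(p)$, independent of $R$ and of $\phi$, such that
\[
\sup_{x\in\mathbf{R}}\bigl|(L_{p+h}-L_p)\phi(x)\bigr|\;\le\;C(p)\,|h|\,\|\phi\|_\infty
\]
for every bounded $\phi\in C(\mathbf{R})$ and every sufficiently small $h$. The idea is to write the difference as $\int K(x,y)e^{p(y-x)}(e^{h(y-x)}-1)\phi(y)\,dy$ and use the elementary estimates $|e^{h\xi}-1|\le|h\xi|e^{|h\xi|}$ together with $|\xi|e^{|h\xi|}\le\tfrac{2}{\delta}\bigl(e^{(|h|+\delta)\xi}+e^{-(|h|+\delta)\xi}\bigr)$ for any $\delta>0$. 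This reduces the estimate to integrals of the form $\int K(x,x-\xi)e^{-q\xi}\,d\xi$ for $q=p\pm(|h|+\delta)$, which are uniformly bounded in $x$ by (K1).

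Next, for $\overline{\lambda_1}$ I would combine this operator bound with Corollary~\ref{cor3.1} and an $R$-uniform a priori estimate. Using $\phi\equiv1$ as a test function in Definition~\ref{def2.1} and invoking Proposition~\ref{prop2.1} yields
\[
-\|a\|_\infty\;\le\;\inf_{x\in I_R}(L_p1)(x)\;\le\;\underline{\lambda_1}(p,R)\;\le\;\overline{\lambda_1}(p,R)\;\le\;\sup_{x\in I_R}(L_p1)(x),
\]
where $(L_p1)(x)=\int K(x,x-\xi)e^{-p\xi}d\xi-a(x)$ is uniformly bounded on compact $p$-intervals thanks to (K1) and boundedness of $a$. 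Therefore $p\mapsto\overline{\lambda_1}(p,R)$ is a convex function bounded by a constant $M$ depending only on the chosen $p$-interval, not on $R$. A convex function bounded by $M$ on $[p_0-r,p_0+r]$ is Lipschitz on $[p_0-r/2,p_0+r/2]$ with constant at most $4M/r$, which delivers the desired uniform-in-$R$ Lipschitz estimate for $\overline{\lambda_1}$.

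The hard part will be $\underline{\lambda_1}$, for which convexity is unavailable. Given $\lambda<\underline{\lambda_1}(p,R)$ and a witness $\phi\in\mathcal{A}$ with $L_p\phi\ge\lambda\phi$ on $I_R$, the operator bound only provides
\[
L_{p+h}\phi(x)\;\ge\;\Bigl(\lambda-C(p)\,|h|\,\tfrac{\sup\phi}{\phi(x)}\Bigr)\phi(x),
\]
and the obstruction is the oscillation ratio $\sup\phi/\inf\phi$, which is not controlled either across $\phi$ or across $R$. To dispose of this factor, I would try to show that the supremum defining $\underline{\lambda_1}(p,R)$ can be realised, up to $\varepsilon$, over a subclass of $\mathcal{A}$ whose oscillation is bounded by a constant depending only on $p$. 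A natural route is to approximate $\underline{\lambda_1}(p,R)$ by Krein--Rutman principal eigenvalues on a sequence of bounded sub-intervals exhausting $I_R$, on which genuine positive eigenfunctions exist, and to control their oscillation by a Harnack-type bound derived from the positivity of the kernel together with the relation $L_p\phi\ge\lambda\phi$. Passing to the limit and re-combining with the first-step operator estimate then closes the argument. Securing a uniform oscillation bound on these approximate principal eigenfunctions is the main technical hurdle of the proof.
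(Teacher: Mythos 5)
Your treatment of $\overline{\lambda_1}$ is correct and is a legitimate alternative to the paper's argument: the operator estimate $\sup_x|(L_{p+h}-L_p)\phi(x)|\le C(p)|h|\|\phi\|_\infty$ follows from (K1) as you say, and combining the convexity of $p\mapsto\overline{\lambda_1}(p,R)$ (Corollary \ref{cor3.1}) with the $R$-independent two-sided bound $\inf_{x\in I_R}(L_p1)(x)\le\underline{\lambda_1}(p,R)\le\overline{\lambda_1}(p,R)\le\sup_{x\in I_R}(L_p1)(x)$ (test function $\phi\equiv1$ plus Proposition \ref{prop2.1}), which is finite locally uniformly in $p$ by (K1), does give a local Lipschitz constant for $\overline{\lambda_1}(\cdot,R)$ independent of $R$. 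The paper instead gets this directly from Lemma \ref{lem3.1} by testing with $\phi_2^{\beta}$, but your convexity route buys the same conclusion.

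The genuine gap is in the $\underline{\lambda_1}$ half, and it is exactly the step you flag but do not carry out. Your proposed route -- approximate $\underline{\lambda_1}(p,R)$ by principal eigenvalues on bounded sub-intervals and control their oscillation by a Harnack-type inequality -- is not available under the standing hypotheses: for nonlocal operators $\phi\mapsto K_p\phi-a\phi$ a principal eigenpair on a bounded interval need not exist at all (the multiplication part destroys compactness; this is precisely why the paper works with generalized principal eigenvalues in the sense of Definition \ref{def2.1}), a Harnack inequality for such operators requires kernel lower bounds of the type (K3)$'$, which are not assumed in this lemma (only (K1) is), and even granting a Harnack bound its constant degrades as the sub-intervals exhaust $I_R$, so the needed oscillation bound would not be uniform. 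The paper's proof removes the obstruction $\sup\psi/\inf\psi$ altogether by a different device: with $\beta=\frac{1}{1+|p_1-p_2|}$, $\alpha=1-\beta$ and $p$ chosen so that $\alpha p+\beta p_2=p_1$, it applies the interpolation inequality of Lemma \ref{lem3.1} to the pair $(1,\psi^{1/\beta})$, where $\psi\in\mathcal{A}$ is an $\varepsilon$-optimal test function for $\underline{\lambda_1}(p_1,R)$. Since $\psi^{1/\beta}\in\mathcal{A}$, this yields $L_{p_2}\psi^{1/\beta}\ge\big(\underline{\lambda_1}(p_1,R)-\varepsilon-C_1|p_1-p_2|\big)\psi^{1/\beta}$ on $I_R$, with $C_1$ controlled by (K1) and $\sup_x f_s^{\prime}(x,0)$ only, hence independent of $R$ and of the witness $\psi$; the Lipschitz bound for $\underline{\lambda_1}$ follows by symmetry. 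Without this (or some substitute that genuinely avoids the oscillation ratio), your proof of the lower eigenvalue's Lipschitz continuity does not close.
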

\begin{proof}
Assume that
${p}_{1}\neq{p}_{2},\ \alpha=\frac{|{p}_{2}-{p}_{1}|}{1+|{p}_{2}-{p}_{1}|},
\ \beta=\frac{1}{1+|{p}_{2}-{p}_{1}|}$. We may, without loss of generality,
assume that $|p_{1}-p_{2}|\leq\frac{1}{2}$ and that
$\underline{{\lambda}_{1}}({p}_{j},R)(j=1,2)$ and $\overline{{\lambda}_{1}}({p}_{j},R)$
are positive by adding a sufficiently large constant $M$ to ${L}_{p}$.
For any $\varepsilon>0$, there exists
$\phi_{2}\in{\mathcal{A}}$ such that
$$L_{{p}_{2}}\phi_{2}\leq(\overline{{\lambda}_{1}}({p}_{2},R)+\varepsilon)\phi_{2}\ \text{on}\ I_{R}.$$
Set $p=p_{1}+\frac{p_{1}-p_{2}}{|p_{1}-p_{2}|}$. Then $\alpha p+\beta p_{2}=p_{1}$.
From Lemma \ref{lem3.1}, we have
\begin{equation}\label{l3.2.1}
\begin{split}
\frac{(L_{p_{1}}\phi_{2}^{\beta})}{{\phi_{2}^{\beta}}}(x)
&\leq\alpha\frac{(L_{p}1)}{1}(x)+\beta\frac{(L_{p_{2}}\phi_{2})}{{\phi_{2}}}(x)\\
&\leq\alpha\big(\int_{\mathbf{R}}K(x,y)e^{p(y-x)}dy-a(x)\big)+
 \beta(\overline{{\lambda}_{1}}({p}_{2},R)+\varepsilon)\\
&\leq\alpha\big(\int_{\mathbf{R}}K(x,y)e^{p(y-x)}dy-a(x)\big)+(\overline{{\lambda}_{1}}({p}_{2},R)+\varepsilon)\\
&\leq\overline{{\lambda}_{1}}({p}_{2},R)+\varepsilon+
\frac{|{p}_{2}-{p}_{1}|}{1+|{p}_{2}-{p}_{1}|}\big(\int_{\mathbf{R}}K(x,y)e^{p(y-x)}dy+f_{s}^{\prime}(x,0)\big)
\end{split}
\end{equation}
for $x\in I_{R}$. Therefore,
$$\frac{(L_{p_{1}}\phi_{2}^{\beta})}{{\phi_{2}^{\beta}}}(x)
 \leq\overline{{\lambda}_{1}}({p}_{2},R)+\varepsilon+C_{0}|p_{1}-p_{2}|,\ \forall x\in I_{R},$$
where $C_{0}$ depends on $p_{1}, K$, and $f_{s}^{\prime}(x,0)$ but independent of $R$.
Take $\varepsilon\to0$ and note that $\phi_{2}^{\beta}\in\mathcal{A}$ since $\phi_{2}\in\mathcal{A}$.
The definition of $\overline{{\lambda}_{1}}({p}_{j},R)$ yields that
$\overline{{\lambda}_{1}}({p}_{1},R)\leq\overline{{\lambda}_{1}}({p}_{2},R)+C_{0}|p_{1}-p_{2}|.$
By the symmetry, we have
$|\overline{{\lambda}_{1}}({p}_{1},R)-\overline{{\lambda}_{1}}({p}_{2},R)|\leq C_{0}|p_{1}-p_{2}|$.

Similarly, there exists $\psi\in\mathcal{A}$ such that
$$L_{{p}_{1}}\psi(x)\geq(\underline{{\lambda}_{1}}({p}_{1},R)-\varepsilon)\psi(x)\ \text{on}\ I_{R}.$$
Set $\phi_{1}\equiv1, \phi_{2}=\psi^{\frac{1}{\beta}}, p=p_{1}+sgn(p_{1}-p_{2}),$ i.e.,
$\alpha p+\beta p_{2}=p_{1}$. Then by Lemma \ref{lem3.1}
\begin{equation*}
\begin{split}
\frac{(L_{p_{2}}\psi^{\frac{1}{\beta}})}{\psi^{\frac{1}{\beta}}}(x)
&\geq\frac{1}{\beta}\frac{L_{p_{1}}\psi}{\psi}(x)-\frac{\alpha}{\beta}\frac{L_{p}1}{1}(x)\\
&\geq\underline{{\lambda}_{1}}({p}_{1},R)-\varepsilon-
{|{p}_{2}-{p}_{1}|}\big(\int_{\mathbf{R}}K(x,y)e^{p(y-x)}dy+f_{s}^{\prime}(x,0)\big).
\end{split}
\end{equation*}
Hence one can still obtain that
$|\underline{{\lambda}_{1}}({p}_{1},R)-\underline{{\lambda}_{1}}({p}_{2},R)|\leq C_{1}|p_{1}-p_{2}|$
for some constant $C_{1}$.
\end{proof}

\begin{proof}[Proof of Proposition \ref{prop2.3}]
It is easy to see that $\overline{H}(p)$ and $\underline{H}(p)$ are locally Lipschitz continuous by
Lemma \ref{lem3.1}. Take $\phi\equiv 1$ as a test function. Then the proof is closed by (K4).
\end{proof}

\section{Proof of the spreading property}

In this section, we always assume that $b(\cdot)\in L^{\infty}(\mathbf{R})\cap C(\mathbf{R})$, and
$a(x)=b(x)-f_{s}^{\prime}(x,0)$.

\subsection{Proof of The first part of Theorem \ref{thm2.1}}

We first give a useful lemma which we will need later.
\begin{lemma}\label{lem4.1}
Assume that $z$ is bounded on $I\times\mathbf{R}$ for any bounded interval $I\in[0,+\infty)$,
$z(t,x)$ is differentiable in $t\in(0,+\infty)$
and continuous on $[0,+\infty)$ for all $x\in\mathbf{R}$,
and that $z(t,x)$ satisfies
\begin{equation}\label{l4.1.1}
\left\{
   \begin{aligned}
 z_{t}(t,x)\geq\int_{\mathbf{R}}K(x,y)z(t,y)dy-a(x)z(t,x),\ & \ \ (0,+\infty)\times\mathbf{R},\\
                                z\geq0,\ & \ \ t=0.\\
   \end{aligned}
   \right.
\end{equation}
Then $z(t,x)\geq0$ for $(t,x)\in(0,+\infty)\times\mathbf{R}$.
\end{lemma}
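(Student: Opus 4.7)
The plan is to establish this nonlocal maximum principle via a semigroup/Duhamel argument that exploits the positivity of the propagator generated by $\mathcal{L}\phi := K\phi - a\phi$.

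First, since $a, b \in L^{\infty}(\mathbf{R})$ and $K$ is bounded on $L^{\infty}(\mathbf{R})$ (indeed $\|K\phi\|_{\infty} \leq \|b\|_{\infty}\|\phi\|_{\infty}$), the operator $\mathcal{L}$ is bounded on $L^{\infty}(\mathbf{R})$ and generates a uniformly continuous semigroup $S(t) := e^{t\mathcal{L}}$. I would then show that $S(t)$ is positivity preserving: writing $\mathcal{L} = \mathcal{L}_{+} - \|a\|_{\infty} I$ with $\mathcal{L}_{+}\phi := K\phi + (\|a\|_{\infty} - a)\phi$, the operator $\mathcal{L}_{+}$ preserves positivity (both $K$ and multiplication by $\|a\|_{\infty} - a \geq 0$ do), hence $e^{t \mathcal{L}_{+}} = \sum_{n \geq 0}(t \mathcal{L}_{+})^{n}/n!$ is positive, and so is $S(t) = e^{-\|a\|_{\infty} t}\, e^{t \mathcal{L}_{+}}$.

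Next, I would reinterpret the hypothesis as a non-negative forcing term: set $g(t, x) := z_{t}(t, x) - \mathcal{L}z(t, x) \geq 0$. By the hypotheses on $z$, the function $g$ is pointwise non-negative and bounded on $I \times \mathbf{R}$ for every bounded $I \subset [0, \infty)$. Then $z$ solves the inhomogeneous Cauchy problem $z_{t} = \mathcal{L}z + g$ with $z(0, \cdot) = z_{0} \geq 0$, and Duhamel's formula reads
$$z(t, \cdot) = S(t) z_{0} + \int_{0}^{t} S(t-s)\, g(s, \cdot)\, ds.$$
To justify this, I would verify that $v(t, \cdot) := z(t, \cdot) - S(t) z_{0} - \int_{0}^{t} S(t-s) g(s, \cdot)\, ds$ satisfies $v_{t} = \mathcal{L}v$ pointwise with $v(0, \cdot) = 0$ and is bounded on bounded time intervals; a Gronwall estimate on $N(t) := \|v(t, \cdot)\|_{\infty}$, using the bound $|v_{t}(t, x)| \leq \|\mathcal{L}\| N(t)$, then forces $N(t) \leq e^{\|\mathcal{L}\| t} N(0) = 0$, so $v \equiv 0$.

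Positivity of $S(t)$ together with $z_{0} \geq 0$ and $g \geq 0$ then forces $z(t, x) \geq 0$ for all $(t, x) \in (0, \infty) \times \mathbf{R}$, as desired. The main obstacle is the rigorous justification of Duhamel's identity at the modest level of regularity assumed on $z$: one must carefully differentiate the time convolution $\int_{0}^{t} S(t-s) g(s, \cdot)\, ds$ in $t$ and commute $\mathcal{L}$ with the time integral. The boundedness of $\mathcal{L}$ on $L^{\infty}(\mathbf{R})$ makes this routine, but the verification must be done explicitly because the nonlocal kernel $K$ provides no parabolic smoothing to fall back on, so one cannot invoke classical maximum-principle arguments that would evaluate the equation at an interior minimum.
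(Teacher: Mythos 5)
Your route (positivity of the semigroup $e^{t\mathcal{L}}$ on $L^{\infty}$ plus Duhamel) is genuinely different from the paper's, which is an elementary maximum-principle argument: after replacing $z$ by $z\,e^{ct}$ so that one may assume $0<a(x)<b(x)+F$, the paper sets $m=\inf_{[0,t_{0}]\times\mathbf{R}}z<0$, picks an approximate infimum point $(t_{\varepsilon},x_{\varepsilon})$ with $z(t_{\varepsilon},x_{\varepsilon})\leq m+\varepsilon<0$, minimizes $z(\cdot,x_{\varepsilon})$ in time on $[0,t_{\varepsilon}]$ to get a point where $z_{t}\leq 0$, and plugs into the differential inequality using $\int_{\mathbf{R}}K(x_{\varepsilon},y)z\,dy\geq m\,b(x_{\varepsilon})$; this gives $F(m+\varepsilon)+\varepsilon b(x_{\varepsilon})\geq 0$, a contradiction for $\varepsilon$ small. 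In particular your closing claim that one ``cannot invoke classical maximum-principle arguments that would evaluate the equation at an interior minimum'' is mistaken: the nonlocal structure ($K\geq 0$, no derivatives in $x$) is precisely what lets that argument run with only pointwise-in-$x$ information, at an \emph{approximate} spatial infimum.

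The genuine gap in your proposal is the assertion that $g(t,x):=z_{t}(t,x)-\mathcal{L}z(t,x)$ is bounded on bounded time strips. The hypotheses only give a lower bound on $z_{t}$ (namely $z_{t}\geq\mathcal{L}z$ with $\mathcal{L}z$ bounded); differentiability in $t$ is assumed pointwise in $x$ with no uniformity, so $z_{t}$, hence $g$, may be unbounded in $x$ on $[0,T]$. Without $g(s,\cdot)\in L^{\infty}(\mathbf{R})$ (and without any stated measurability or continuity of $z$ jointly in $(t,x)$, or of $s\mapsto z(s,\cdot)$ as an $L^{\infty}$-valued curve), the Bochner integral $\int_{0}^{t}S(t-s)g(s)\,ds$ need not be finite or even well defined, so your function $v$ and the Gronwall estimate on $\|v(t,\cdot)\|_{\infty}$ collapse. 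Even granting boundedness, verifying $v_{t}=\mathcal{L}v$ requires differentiating the Duhamel term in $t$, which needs $s\mapsto g(s,\cdot)$ continuous in a strong enough sense --- i.e.\ continuity of $z_{t}$ in $t$ uniformly in $x$ --- which is also not assumed. These are not routine technicalities dispatched by boundedness of $\mathcal{L}$; they are exactly the regularity the lemma does not provide, and they are the reason the paper argues pointwise at an approximate minimum rather than through a semigroup representation.
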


\begin{proof}
We may, without loss of generality, assume that $0<a(x)<b(x)+F$ for some positive constant $F$.
In fact, one can consider
$\zeta(t,x)=z(t,x){e}^{ct}$ for $c$ large enough instead of $z(t,x)$.
Now we prove that $z(t,x)\geq0$. If not, then there exists
$(t_{0},x_{0})\in(0,+\infty)\times\mathbf{R}$ such that $z(t_{0},x_{0})<0$.
Let $m=\inf\limits_{[0,t_{0}]\times\mathbf{R}}z(t,x)\leq z(t_{0},x_{0})<0$. For any $\varepsilon>0$ small,
there exists $(t_{\varepsilon},x_{\varepsilon})\in(0,t_{0}]\times\mathbf{R}$ such that
$z(t_{\varepsilon},x_{\varepsilon})\leq m+\varepsilon<0$. Consider $z(t,x_{\varepsilon})$ on
$[0,t_{\varepsilon}]$. Then there must be $\tau_{\varepsilon}\in(0,t_{\varepsilon}]$ such that
$z(\tau_{\varepsilon},x_{\varepsilon})=\min\limits_{[0,t_{\varepsilon}]}z(t,x_{\varepsilon})$.
Hence $z_{t}(t,x_{\varepsilon})\leq0$ at $\tau_{\varepsilon}$. Now from \eqref{l4.1.1} we have
\begin{equation}\label{l4.1.2}
\begin{split}
a(x_{\varepsilon})z(\tau_{\varepsilon},x_{\varepsilon})
&\geq\int_{\mathbf{R}}K(x_{\varepsilon},y)z(\tau_{\varepsilon},y)dy-z_{t}(\tau_{\varepsilon},x_{\varepsilon})\\
&\geq\int_{\mathbf{R}}K(x_{\varepsilon},y)z(\tau_{\varepsilon},y)dy\\
&\geq m\int_{\mathbf{R}}K(x_{\varepsilon},y)dy.
\end{split}
\end{equation}
On the other hand,
$$a(x_{\varepsilon})z(\tau_{\varepsilon},x_{\varepsilon})\leq a(x_{\varepsilon})(m+\varepsilon)\leq(b(x_{\varepsilon})+F)(m+\varepsilon).$$
Combining this with \eqref{l4.1.2}, we have $(b(x_{\varepsilon})+F)(m+\varepsilon)\geq mb(x_{\varepsilon})$, i.e.,
$F(m+\varepsilon)+\varepsilon b(x_{\varepsilon})\geq0$, which contradicts
$m<0$ since $\varepsilon$ can be arbitrarily small. The proof is complete.
\end{proof}

\begin{rem}\label{re4.1}
Assume that $z$ is bounded on $I\times\mathbf{R}$ for any bounded interval $I\in[0,+\infty)$,
and that $z(t,x)$ is locally Lipschitz continuous on $[0,\infty)$ for any $x\in\mathbf{R}$.
We denote the left derivative by $\frac{d^{-}}{dt}$,
and assume that $\frac{d^{-}}{dt}z(t,x)$ exists for any $t>0$, $x\in\mathbf{R}$. Moreover, we assume
that $z$ satisfies
$$\left\{
   \begin{aligned}
 \frac{\partial^{-}}{\partial t}z(t,x)\geq\int_{\mathbf{R}}K(x,y)z(t,y)dy-a(t,x,z)z(t,x),\ & \ \ \ (0,+\infty)\times\mathbf{R},,\\
                                     z\geq0,\ & \ \ \ t=0,\\
   \end{aligned}
   \right.$$
where $a\in L^{\infty}((0,+\infty)\times\mathbf{R}^{2})$. Then $z(t,x)\geq0$ for $(t,x)\in(0,+\infty)\times\mathbf{R}$ still holds.
\end{rem}

\begin{cor}\label{cor4.1}
For any $\alpha\in(0,1)$. Let $v$ be the solution of
\begin{equation}\label{c4.1.1}
\left\{
   \begin{aligned}
 v_{t}(t,x)=\int_{\mathbf{R}}K(x,y)v(t,y)dy-b(x)v(t,x)+f(x,v),\ & \ \ (0,+\infty)\times\mathbf{R},\\
                                v=\alpha,\ & \ \ t=0.\\
   \end{aligned}
   \right.
\end{equation}
\end{cor}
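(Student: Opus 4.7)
The plan is to establish via comparison that $\alpha\leq v\leq 1$, use a time-shift argument to show $v$ is non-decreasing in $t$, pass to a pointwise limit $V(x)=\lim_{t\to\infty}v(t,x)$, and finally identify $V\equiv 1$ via a minimum-point Liouville argument based on the KPP hypothesis $\inf_x f(x,s)>0$ for $s\in(0,1)$.

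For the bounds, note that $v\equiv 1$ is a stationary supersolution and the constant $v\equiv\alpha$ is a strict subsolution (since $L\alpha=0$ but $f(x,\alpha)>0$). I would apply Lemma \ref{lem4.1} to the differences $v-\alpha$ and $1-v$ after linearising $f(x,v)-f(x,\alpha)$ via the mean-value theorem into a bounded coefficient, using $f(x,\cdot)\in C^{1+\gamma}$ uniformly in $x$. This yields $\alpha\leq v(t,x)\leq 1$ for all $(t,x)$.

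For monotonicity in $t$, fix small $h>0$ and set $w(t,x):=v(t+h,x)-v(t,x)$. By the uniform $C^{1+\gamma}$-regularity of $f(x,\cdot)$, $w$ satisfies a linear equation of the form covered by Remark \ref{re4.1},
\begin{equation*}
w_t(t,x)=\int_{\mathbf{R}}K(x,y)w(t,y)\,dy-b(x)w(t,x)+c(t,x)w(t,x),
\end{equation*}
with $c$ uniformly bounded on $[0,\infty)\times\mathbf{R}$. A Taylor expansion at $t=0$ gives $w(0,x)=hf(x,\alpha)+o(h)\geq 0$ for $h$ small, since $\inf_x f(x,\alpha)>0$. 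Remark \ref{re4.1} then forces $w(t,x)\geq 0$ for all $t\geq 0$, so $v(\cdot,x)$ is non-decreasing.

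Combining monotonicity with the upper bound yields a pointwise limit $V(x)$ with $\alpha\leq V\leq 1$; using (K1) and dominated convergence to pass to the limit in the integral equation, $V$ is a continuous bounded stationary solution. The heart of the argument and the main obstacle is ruling out $V\not\equiv 1$. Rewriting the stationary identity with $b(x)=\int K(x,y)\,dy$ as $\int K(x,y)(V(y)-V(x))\,dy=-f(x,V(x))$ and testing along a sequence $x_n$ with $V(x_n)\to m:=\inf_x V$: if $m<1$, then $V(y)\geq m$ forces $\liminf_n\int K(x_n,y)(V(y)-V(x_n))\,dy\geq 0$, while on the right $f(x_n,V(x_n))\to$ a value $\geq\inf_x f(x,m)>0$ (using the KPP hypothesis and the uniform Lipschitz dependence of $f$ on $s$), so the right-hand side has $\limsup\leq -\inf_x f(x,m)<0$, a contradiction. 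Hence $V\equiv 1$, and Dini's theorem together with the equicontinuity inherited from (K1) promotes monotone pointwise convergence to locally uniform convergence on $\mathbf{R}$.
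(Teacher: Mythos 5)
Your first three steps are essentially sound: the bounds $\alpha\leq v\leq 1$ via Lemma \ref{lem4.1}/Remark \ref{re4.1}, the time-shift monotonicity, and the Liouville-type argument showing that the pointwise monotone limit $V$ satisfies the stationary equation and must equal $1$ (here you correctly exploit $b\in L^{\infty}$ and $\inf_x f(x,m)>0$ for $m\in(0,1)$). Two small remarks: the Taylor claim $w(0,x)=hf(x,\alpha)+o(h)\geq0$ needs the $o(h)$ to be uniform in $x$, but this is moot since $w(0,\cdot)=v(h,\cdot)-\alpha\geq0$ already follows from your lower bound $v\geq\alpha$; and the appeal to ``equicontinuity inherited from (K1)'' is unsupported, though also unnecessary once $V\equiv1$, since Dini only needs continuity of each $v(t,\cdot)$ and of the limit.

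The genuine gap is the final conclusion. The corollary asserts that $v(t,x)\to1$ \emph{uniformly in $x\in\mathbf{R}$}, and this global uniformity is exactly what is used later (Step 3 of the proof of part 2 of Theorem \ref{thm2.1} needs $v(t,x)\geq 1-\sigma$ for all $t\geq T_1$ and \emph{all} $x\in\mathbf{R}$). Your route delivers only locally uniform convergence: Dini's theorem applies on compact sets, and neither the pointwise identification $V\equiv1$ nor hypothesis (K1) rules out the convergence degenerating as $|x|\to\infty$ (there is no uniform-in-$t$ modulus of continuity of $v(t,\cdot)$ available under (K1)--(K5)). To obtain uniformity one needs an $x$-independent lower barrier, which is precisely the paper's argument: set $c(s)=\inf_{x\in\mathbf{R}}f(x,s)>0$ for $s\in(0,1)$, let $\underline v$ solve the ODE $\underline v'=c(\underline v)$, $\underline v(0)=\alpha$, so $\underline v(t)\to1$, and compare $w=v-\underline v$ through the mean-value linearization and Lemma \ref{lem4.1} to get $v(t,x)\geq\underline v(t)$ for all $x$. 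Once you introduce such a spatially homogeneous subsolution, the monotonicity, limit-equation and Liouville steps in your proposal become unnecessary; without it, your proof establishes a strictly weaker statement than the one the paper needs.
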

Then $\displaystyle{\lim_{t\rightarrow +\infty}}v(t,x)=1$ uniformly in $x\in\mathbf{R}$.
\begin{proof}
First, by Lemma \ref{lem4.1}, we have $0\leq v\leq1$ on $[0,+\infty)\times\mathbf{R}$.
Let $0<c(s)=\inf\limits_{x\in\mathbf{R}}f(x,s)\leq f(x,s)\ \forall s\in(0,1)$, and $\underline{v}$ be the solution
of
$$\left\{
   \begin{aligned}
 \underline{v}^{\prime}(t)=c(\underline{v})\ & \ \ (0,+\infty),\\
                                \underline{v}=\alpha,\ & \ \ t=0.\\
   \end{aligned}
   \right.$$
Then one can easily find that $\displaystyle{\lim_{t\rightarrow +\infty}}\underline{v}(t)=1$. Let
$w=v-\underline{v}$. Then $w$ satisfies $w(0,x)=0$ and
\begin{equation*}
\begin{split}
w_{t}(t,x)
&=\int_{\mathbf{R}}K(x,y)w(t,y)dy-b(x)w(t,x)+f(x,v)-c(\underline{v})\\
&\geq\int_{\mathbf{R}}K(x,y)w(t,y)dy-b(x)w(t,x)+f(x,v)-f(x,\underline{v})\\
&\geq\int_{\mathbf{R}}K(x,y)w(t,y)dy-b(x)w(t,x)+\hat{c}(t,x)w(t,x),
\end{split}
\end{equation*}
where $\hat{c}(t,x)=\int_{0}^{1}f_{s}^{\prime}(x,\underline{v}+s(v-\underline{v}))ds$ is bounded
on $\mathbf{R}$. Hence $w(t,x)\geq0$ by Lemma \ref{lem4.1}, i.e., $v(t,x)\geq\underline{v}(t)$.
Therefore, $1\geq\displaystyle{\lim_{t\rightarrow +\infty}}v(t,x)\geq\displaystyle{\lim_{t\rightarrow +\infty}}\underline{v}(t)=1$.
\end{proof}

\begin{proof}[Proof of part 1 of Theorem \ref{thm2.1}]
For any given $\omega>\overline{\omega}$, i.e.,
$\omega>\min\limits_{p>0}\frac{\overline{H}(-p)}{p}$, there exist $p>0$ and $R$ large enough such that
$\overline{{\lambda}_{1}}(-p,R)<\omega p$. Hence for
$\delta\in(0,\omega p)$, there exists $\phi\in\mathcal{A}$ such that
$$L_{p}\phi(x)\leq(\omega p-\delta)\phi(x)\ \forall x\in I_{R},$$
i.e.,
\begin{equation}\label{t2.1a}
\mathcal{L}(e^{-p\cdot}\phi)(x)\leq(\omega p-\delta)e^{-px}\phi(x)\ \forall x\in I_{R}.
\end{equation}
Since $\inf\limits_{x\in\mathbf{R}}\phi(x)>0$ and $u_{0}$ has compact support,
we may assume that ${\phi(x)e}^{-px}\geq u_{0}(x)$ on $\mathbf{R}$ and
$\inf\limits_{x\in\mathbf{R}}\phi(x)e^{-pR}\geq1$ through multiplying by a sufficiently large constant.
Let $\psi(t,x)=\phi(x)e^{-px+(\omega p-\delta)t}(\geq\phi(x)e^{-px}$ for $(t,x)\in[0,+\infty)\times\mathbf{R}$).
Then $\psi(t,x)\geq1$ for $(t,x)\in[0,+\infty)\times(-\infty,R]$ and
$\displaystyle{\lim_{x\rightarrow \infty}}\psi(t,x)=0$ locally uniform
with respect to $t\in[0,\infty)$.
Moreover, \eqref{t2.1a} yields that
$$\psi_{t}(t,x)\geq\mathcal{L}\psi(t,x)\ \forall(t,x)\in[0,+\infty)\times I_{R}.$$
Let $v(t,x)=\min\{1,\psi(t,x)\}(\leq\psi(t,x))$ for $(t,x)\in[0,+\infty)\times\mathbf{R}$. Then
$v(t,x)=1$ on $[0,+\infty)\times(-\infty,R]$. One can verify that $v$ satisfies
$$\left\{
   \begin{aligned}
 \frac{\partial^{-}}{\partial t}v(t,x)\geq\int_{\mathbf{R}}K(x,y)v(t,y)dy-b(x)v(t,x)+f(x,v),\ & \ \ \ (0,+\infty)\times\mathbf{R},,\\
                                     v\geq0,\ & \ \ \ t=0.\\
   \end{aligned}
   \right.$$
Now let $w=v-u$. Then $w$ satisfies
$$\left\{
   \begin{aligned}
 \frac{\partial^{-}}{\partial t}w(t,x)\geq\int_{\mathbf{R}}K(x,y)w(t,y)dy-b(x)w(t,x)+\hat{c}(t,x)w(t,x),\ & \ \ \ (0,+\infty)\times\mathbf{R},\\
                                     v\geq0,\ & \ \ \ t=0.\\
   \end{aligned}
   \right.$$
By Remark \ref{re4.1}, one has $u\leq v$, in particular,
$$\sup\limits_{x\geq\omega t}u(t,x)
\leq\sup\limits_{x\geq\omega t}\phi(x)e^{-px+(\omega p-\delta)t}
\leq\sup\limits_{x\geq\omega t}\phi(x)e^{-px+(\omega p-\delta)\frac{x}{\omega}}
=\sup\limits_{x\geq\omega t}\phi(x)e^{\frac{-\delta x}{\omega}}\to0$$
as $t\to+\infty$.
\end{proof}

\begin{rem}\label{re4.2}
In fact, we need (K1) and the function $\overline{H}(p)>0$ is locally Lipschitz continuous with
$$\displaystyle{\lim_{p\rightarrow{0}^{+}}}\frac{\overline{H}(\pm p)}{p}=
\displaystyle{\lim_{p\rightarrow+\infty}}\frac{\overline{H}(\pm p)}{p}=+\infty.$$
\end{rem}

\subsection{Homogenization techniques to the equation}

In order to show the second part of Theorem \ref{thm2.1}, we will first use homogenization techniques
to consider the behavior of $v_{\varepsilon}(t,x):=u(\frac{t}{\varepsilon},\frac{x}{\varepsilon})$ as
$\varepsilon\to0$. For this reason, we need consider $\varepsilon\ln v_{\varepsilon}(t,x)$,
which is well defined since the following

\begin{lemma}\label{lem4.2}
Assume that $u$ is the solution of \eqref{1.1} and (K3) holds. Then $u(t,x)>0$ for any
$(t,x)\in(0,+\infty)\times\mathbf{R}$.
\end{lemma}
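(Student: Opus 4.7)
The plan is to reduce the strict positivity of $u$ to that of the pure nonlocal semigroup $e^{tK}$ applied to $u_0$, and then to extract this from (K3) by iteration. Because $0\le u_0\le 1$ and both constants $0$ and $1$ are stationary solutions (using $f(x,0)=f(x,1)=0$), standard comparison yields $0\le u\le 1$ throughout; on this range $f(x,u)\ge 0$, so $u$ is a supersolution of the linear problem $v_t=Kv-b(x)v$, $v(0,\cdot)=u_0$, and Lemma~\ref{lem4.1} applied to $u-v$ gives $u\ge v\ge 0$. Introducing $B:=\|b\|_\infty$ and $w(t,x):=e^{Bt}v(t,x)$, I compute $w_t=(B-b)w+Kw\ge Kw$ with $w(0,\cdot)=u_0$, and Lemma~\ref{lem4.1} applied (with $a\equiv 0$) to $w-e^{tK}u_0$ yields
\[
u(t,x)\ \ge\ e^{-Bt}(e^{tK}u_0)(x)\ =\ e^{-Bt}\sum_{n=0}^{\infty}\frac{t^{n}}{n!}K^{n}u_{0}(x),
\]
a series of nonnegative terms converging uniformly since $\|K^n u_0\|_\infty\le B^n\|u_0\|_\infty$.

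It therefore suffices to show that for every $x\in\mathbf{R}$ at least one of the $K^n u_0(x)$ is strictly positive, and this is exactly what (K3) delivers by iteration. By continuity and $u_0(0)>0$, pick $r_0,\alpha_0>0$ with $u_0\ge\alpha_0$ on $[-r_0,r_0]$. Once a quantitative positive lower bound is available on a radius $r_n\ge\eta_0$, say $K^n u_0\ge\alpha_n$ on $[-r_n,r_n]$, applying (K3) with $\phi=K^n u_0$ and $\eta=r_n$ gives
\[
K^{n+1}u_{0}(x)\ \ge\ K\bigl[(K^{n}u_{0})\chi_{r_{n}+\delta_{0}}\bigr](x)\ \ge\ C\,\alpha_n\qquad\text{for } |x|\le r_n+\delta_0,
\]
so the radius of the positivity interval grows by at least $\delta_0$ in each step and eventually exceeds any prescribed $|x|$. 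Combined with the series lower bound above, this delivers $u(t,x)>0$ for all $(t,x)\in(0,+\infty)\times\mathbf{R}$.

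The main obstacle is the base case when $r_0<\eta_0$: (K3) requires $\eta\ge\eta_0$, and its right-hand side degenerates when $\min_{|y|\le\eta}\phi(y)=0$. I would handle this by a single preliminary application of (K3) with $\eta=\eta_0$ and $\phi=u_0$. Since $u_0(0)>0$ forces $u_0\not\equiv 0$ on $[-\eta_0-\delta_0,\eta_0+\delta_0]$, the strict inequality in (K3) still yields $Ku_0(x)>0$ on $[-\eta_0-\delta_0,\eta_0+\delta_0]$; by (K1) the function $Ku_0$ is continuous, so compactness of $[-\eta_0,\eta_0]$ upgrades this pointwise positivity to a uniform bound $Ku_0\ge\alpha_1:=\min_{[-\eta_0,\eta_0]}Ku_0>0$. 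This provides the $n=1$ seed $(\alpha_1,r_1=\eta_0)$ from which the quantitative induction above proceeds without obstruction.
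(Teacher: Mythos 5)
Your reduction $u(t,x)\ge e^{-Bt}(e^{tK}u_{0})(x)$ via Lemma \ref{lem4.1} is sound, and so is your inductive step: once $K^{n}u_{0}\ge\alpha_{n}$ on $[-r_{n},r_{n}]$ with $r_{n}\ge\eta_{0}$, (K3) does propagate a positive bound to $[-r_{n}-\delta_{0},r_{n}+\delta_{0}]$ — this is exactly the iteration the paper itself runs in Lemma \ref{lem4.3} and in the proof of Theorem \ref{thm4.1}. The genuine gap is your base case. (K3) controls $K[\phi\chi_{\eta+\delta_{0}}]$ only through $\min_{|y|\le\eta}\phi(y)$ with $\eta\ge\eta_{0}$, and when $u_{0}$ has small support this minimum is $0$, so the hypothesis gives nothing. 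Your way out — invoking the strict inequality in (K3) to conclude $Ku_{0}>0$ on $[-\eta_{0}-\delta_{0},\eta_{0}+\delta_{0}]$ merely from $u_{0}\not\equiv0$ — is not available: taken literally the strict inequality is unsatisfiable (test it with $\phi\equiv0$), so it must be read as ``$\ge C\min_{|y|\le\eta}\phi(y)$'', which is precisely what the paper's Examples \ref{eg4.1} and \ref{eg4.2} verify, and which is vacuous when the minimum vanishes.

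Moreover, the missing step is not merely unproved, it is false under (K3) alone. Take $K\phi(x)=\phi(x-q)+\phi(x+q)$ (a two-atom kernel as in Example \ref{eg4.2}); it satisfies (K3) with $\eta_{0}=\delta_{0}=q$, $C=1$. If $u_{0}$ is supported in $(-\varepsilon,\varepsilon)$ with $\varepsilon<q/2$, then every $K^{n}u_{0}$ vanishes on the coset $\tfrac{q}{2}+q\mathbf{Z}$, so both $Ku_{0}$ and your series lower bound are identically $0$ there; in fact the solution $u$ of \eqref{1.1} itself stays $0$ on that coset for all time, because the restriction of the equation to $x_{0}+q\mathbf{Z}$ is a closed lattice system with Lipschitz nonlinearity, zero initial data and $f(x,0)=0$. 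So no argument can extract the pointwise positivity of $\sum_{n}t^{n}K^{n}u_{0}/n!$ from (K3) alone — that positivity is exactly hypothesis (K5), and Example \ref{eg4.2} imposes the extra conditions $q_{n_{1}}<a_{0}+b_{0}$, $-q_{n_{2}}<a_{0}+b_{0}$ linking the atoms to the support of $u_{0}$ for precisely this reason. The paper does not argue from (K3) either: its proof of Lemma \ref{lem4.2} is a citation to \cite[Proposition 2.2]{sz}, whose kernel assumptions supply the seed you are missing. Your argument does become a complete (and self-contained) proof if you add (K5), or if you assume $u_{0}\ge\sigma_{0}>0$ on $[-\eta_{0},\eta_{0}]$ — which is the form in which the paper actually uses the statement in the proof of Theorem \ref{thm4.1}.
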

\begin{proof}
The proof follows from similar arguments to \cite[Proposition 2.2]{sz}.
\end{proof}

Denote
$z_{\varepsilon}(t,x)=\varepsilon\ln v_{\varepsilon}(t,x), K_{\varepsilon}(x,y)=K(\frac{x}{\varepsilon},\frac{y}{\varepsilon}), f_{\varepsilon}(x,s)=f(\frac{x}{\varepsilon},s),$ and
$b_{\varepsilon}(x)=b(\frac{x}{\varepsilon})=\int_{\mathbf{R}}K(\frac{x}{\varepsilon},y)dy
=\frac{1}{\varepsilon}\int_{\mathbf{R}}K_{\varepsilon}(x,y)dy$.
Then, for $(t,x)\in(0,+\infty)\times\mathbf{R}$, $v_{\varepsilon}(t,x)$ satisfies
\begin{equation}\label{vHeq}
\partial_{t} v_{\varepsilon}(t,x)=
\frac{1}{\varepsilon^{2}}\int_{\mathbf{R}}K_{\varepsilon}(x,y)v_{\varepsilon}(t,y)dy
-\frac{1}{\varepsilon}b_{\varepsilon}(x)v_{\varepsilon}(t,x)+\frac{1}{\varepsilon}f_{\varepsilon}(x,v_{\varepsilon}),
\end{equation}
and $z_{\varepsilon}$ satisfies
\begin{equation}\label{zHeq}
\partial_{t} z_{\varepsilon}(t,x)=
\frac{1}{\varepsilon}\int_{\mathbf{R}}K_{\varepsilon}(x,y)\exp\big(\frac{z_{\varepsilon}(t,y)-z_{\varepsilon}(t,x)}{\varepsilon}\big)dy
-b_{\varepsilon}(x)+\frac{f_{\varepsilon}(x,v_{\varepsilon})}{v_{\varepsilon}}.
\end{equation}

\begin{thm}\label{thm4.1}
For any compact set $Q\subseteq(0,+\infty)\times\mathbf{R}$ there exist constants $c>0$ and $\varepsilon_{0}(\leq1)$ depending on $Q$ such that $|z_{\varepsilon}(t,x)|\leq c$ for all $\varepsilon\in(0,\varepsilon_{0}),(t,x)\in Q$.
\end{thm}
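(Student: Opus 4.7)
The bound $|z_\varepsilon|\le c$ splits into an upper and a lower bound on $z_\varepsilon=\varepsilon\ln v_\varepsilon$. The upper bound is essentially free: since $f(x,1)\equiv 0$, the constant $1$ is a stationary supersolution of \eqref{1.1}, so by a standard comparison argument $u\le 1$ throughout, hence $v_\varepsilon\le 1$ and $z_\varepsilon\le 0$. The substance of Theorem \ref{thm4.1} is therefore the lower bound $z_\varepsilon\ge -c$, which is equivalent to the pointwise estimate $u(T,X)\ge\exp(-c/\varepsilon)$ on the scales $T\sim|X|\sim 1/\varepsilon$ dictated by $Q$.

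To produce this lower bound, I plan to compare $u$ from below with the linearized semigroup and then iterate (K3). Since $f\ge 0$ and $b=\int K(\cdot,y)\,dy\in L^\infty$ by (K1), one has $u_t\ge Ku-Bu$ with $B=\|b\|_\infty$; putting $w=e^{Bt}u$ gives $w_t\ge Kw$, and Lemma \ref{lem4.1} applied to $w-e^{Kt}u_0$ yields
\[
  u(t,x)\ge e^{-Bt}\,(e^{Kt}u_0)(x)=e^{-Bt}\sum_{n\ge 0}\frac{t^n}{n!}(K^n u_0)(x).
\]
I will then use (K5) together with continuity of $u_0$ and the mapping property $K\colon C\to C$ from (K1) to fix a small $\tau>0$ for which $\phi_0:=e^{K\tau}u_0$ is continuous and strictly positive, so that $\phi_0\ge m>0$ on $[-\eta_0,\eta_0]$ for some $m$. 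A direct induction applying (K3) to $\phi_0$ produces $K^n\phi_0(x)\ge C^n m$ for every $|x|\le\eta_0+n\delta_0$. For an arbitrary $(T,X)$ with $T\ge 2\tau$, choosing $n_\star=\lceil((|X|-\eta_0)^+)/\delta_0\rceil$ and retaining only the $n_\star$-th term of the series expansion of $e^{K(T-\tau)}\phi_0$ gives $(e^{KT}u_0)(X)\ge m(CT/2)^{n_\star}/n_\star!$, and a Stirling estimate $n_\star!\le 3n_\star^{n_\star}e^{-n_\star}\sqrt{n_\star}$ converts this to
\[
  \ln(e^{KT}u_0)(X)\ge \ln m + n_\star\ln\!\bigl(eCT/(2n_\star)\bigr) - \tfrac12\ln(9n_\star).
\]

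The main obstacle, and the step that ties the constants to $Q$, is controlling $n_\star\ln(eCT/(2n_\star))$ from below uniformly over $Q$. Here compactness of $Q\subset(0,\infty)\times\mathbf{R}$ is crucial: it supplies $t_*>0$ and $M_*>0$ so that $t\ge t_*$ and $|x|\le M_*$ on $Q$, whence $(T,X)=(t/\varepsilon,x/\varepsilon)$ satisfies $|X|/T=|x|/t\le M:=M_*/t_*$ uniformly in $(t,x,\varepsilon)$. This forces $n_\star\le (M/\delta_0)T+1$, so $eCT/(2n_\star)$ is bounded below by a positive constant $\rho=\rho(M,C,\delta_0)$, and hence $n_\star\ln(eCT/(2n_\star))\ge -n_\star|\ln\rho|\ge -\alpha_0|X|-\beta_0$ with $\alpha_0,\beta_0$ depending only on $Q$ and the data. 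Combined with the $e^{-BT}$ prefactor this gives $\ln u(T,X)\ge -BT-\alpha_0|X|-\beta_0-O(\ln(1+|X|))$. Substituting $(T,X)=(t/\varepsilon,x/\varepsilon)$, multiplying by $\varepsilon$, and noting that $t$ and $|x|$ are bounded on $Q$ while $\varepsilon\ln(1+|x|/\varepsilon)=O(\varepsilon|\ln\varepsilon|)\to 0$, yields $z_\varepsilon(t,x)\ge -c(Q)$ uniformly for small $\varepsilon$, completing the plan. I expect the only subtle point to be the interplay between the Stirling estimate and the $|X|/T\le M$ bookkeeping, together with checking that (K5) does deliver a quantitative lower bound $m$ on $[-\eta_0,\eta_0]$ at a fixed time $\tau$ independent of $\varepsilon$.
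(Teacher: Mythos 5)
Your proposal is correct, and its engine is the same as the paper's: the upper bound is the trivial $u\le 1$, and the lower bound comes from iterating (K3) $n$ times to push a positive lower bound out to $|x|\le\eta_0+n\delta_0$, paying an $e^{-Bt}$ factor, and then doing the $(t/\varepsilon,x/\varepsilon)$ bookkeeping over the compact set $Q$. The execution differs in two places. First, for the initial positivity you invoke (K5) to get a continuous strictly positive $\phi_0=e^{K\tau}u_0$ and take its minimum on $[-\eta_0,\eta_0]$, whereas the paper gets $u(\tau/2,\cdot)>0$ from Lemma \ref{lem4.2} (which rests on (K3), via the argument of Shen--Zhang); both are legitimate since (K1)--(K5) are in force for this part. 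Second, the paper packages the iteration as Lemma \ref{lem4.3}, a recursive Duhamel estimate giving $u(nt,x)\ge\sigma_0e^{-nBt}(e^{Ct}-1)^n$ for $|x|<\eta_0+n\delta_0$, and then chooses $n_\varepsilon\approx R/(\varepsilon\delta_0)$ so that the effective time step $t_\varepsilon=t/(\varepsilon n_\varepsilon)$ stays of order one, which avoids any factorial asymptotics; you instead bound $u\ge e^{-Bt}e^{Kt}u_0$, keep the single $n_\star$-th term of the series with $K^{n_\star}\phi_0\ge C^{n_\star}m$ on $|x|\le\eta_0+n_\star\delta_0$, and control $(CT/2)^{n_\star}/n_\star!$ by Stirling together with the uniform ratio bound $|X|/T\le M$ furnished by compactness of $Q$. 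Your route is slightly more linear-algebraic (everything happens at the level of the semigroup series) at the cost of the Stirling bookkeeping and the $T\ge 2\tau$, $n_\star\ge 1$ side cases, which you have handled or which are trivial; the paper's choice of $n_\varepsilon$ buys a cleaner final limit. Either way the constants $c,\varepsilon_0$ depend only on $Q$ and the data, as required.
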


To prove this theorem, we need the following
\begin{lemma}\label{lem4.3}
Let $B=\|b\|_{\infty}$ and $\sigma(t_{0})=\min\limits_{|y|\leq\eta}u(t_{0},y)$. Then we have
$$u(t,x)\geq e^{-B(t-t_{0})}\big(\sigma(t_{0})(e^{C(t-t_{0})}-1)+u(t_{0},x)\big)\ \forall\ t\geq t_{0}, |x|<\eta+{\delta_{0}}.$$
\end{lemma}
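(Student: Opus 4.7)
The plan is to convert the lemma into a Gronwall-type estimate for the quantity $h(t) := \min_{|y|\le\eta} g(t,y)$, where $g(t,x) := e^{B(t-t_0)}u(t,x)$ is an auxiliary function that absorbs the loss term. The key input is assumption (K3), which turns the nonlocal convolution $Ku$ into a pointwise lower bound in terms of the minimum of $u$ over $|y|\le\eta$.

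First I would reduce to a linear inequality. Since $f(x,u)\ge 0$ for $u\in[0,1]$, and since $u(t,\cdot)\in[0,1]$ (this comes from the comparison principle Lemma \ref{lem4.1} applied to \eqref{1.1}), the equation yields $u_t \ge Ku - b(x)u \ge Ku - Bu$. Setting $g := e^{B(t-t_0)}u$, a direct computation gives
\begin{equation*}
g_t = (u_t + Bu)e^{B(t-t_0)} \ge (Ku - Bu + Bu)e^{B(t-t_0)} = Kg \qquad \text{on } (t_0,\infty)\times\mathbf{R}.
\end{equation*}
Next I would apply (K3) with $\phi = g(t,\cdot) \ge 0$ (allowed because $u\ge0$): for any $|x|\le\eta+\delta_0$,
\begin{equation*}
(Kg)(t,x) \ge \int_{\mathbf{R}} K(x,y)\chi_{\eta+\delta_0}(y)g(t,y)\,dy \ge C\,\min_{|y|\le\eta} g(t,y) = C\,h(t).
\end{equation*}

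Now I would close the Gronwall loop. Integrating $g_t(t,y)\ge Ch(t)$ from $t_0$ to $t$ gives, for all $|y|\le\eta+\delta_0$,
\begin{equation*}
g(t,y) \ge u(t_0,y) + C\int_{t_0}^{t} h(s)\,ds.
\end{equation*}
Restricting to $|y|\le\eta$ (which lies in $|y|\le\eta+\delta_0$) and taking the minimum yields the integral inequality
\begin{equation*}
h(t) \ge \sigma(t_0) + C\int_{t_0}^{t} h(s)\,ds,
\end{equation*}
and a standard Gronwall argument (e.g.\ differentiating the integral) then gives $h(t) \ge \sigma(t_0)\,e^{C(t-t_0)}$.

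Finally I would plug this lower bound back into the integrated inequality for $g(t,x)$ with $|x|\le\eta+\delta_0$:
\begin{equation*}
g(t,x) \ge u(t_0,x) + C\int_{t_0}^{t}\sigma(t_0)e^{C(s-t_0)}\,ds = u(t_0,x) + \sigma(t_0)\bigl(e^{C(t-t_0)}-1\bigr),
\end{equation*}
and the stated estimate follows by dividing by $e^{B(t-t_0)}$. There is no really hard step here; the only things to check carefully are that (K3) applies to $\phi = g(t,\cdot)$ (it does, since $g\ge 0$ and continuous in $y$), that $h$ is continuous in $t$ so the integral inequality is meaningful (it inherits continuity from $u$), and that the sign arithmetic $-b(x)u \ge -Bu$ works in the direction needed, which it does because $u\ge0$. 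The conceptual point worth stating in the write-up is that the auxiliary function $g$ is precisely designed so that the dissipative term $-b(x)u$ disappears and one is left with a purely gain-type inequality $g_t \ge Kg$, on which (K3) can be used directly.
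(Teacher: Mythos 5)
Your argument is correct, and it reaches the same estimate by a cleaner closing mechanism than the paper. The common core is identical: you drop $f\ge 0$, absorb the dissipative term via the factor $e^{B(t-t_0)}$ to get the pure gain inequality $g_t\ge Kg$, and invoke (K3) to convert the nonlocal term into $C\min_{|y|\le\eta}g(t,y)$ on the larger ball $|x|\le\eta+\delta_0$ --- exactly the paper's first step. Where you diverge is in how the exponential is produced: the paper runs an explicit Picard-type iteration, feeding the bound $I_0, I_1, I_2,\dots$ back into the integrated inequality and using (K3) with $\phi\equiv 1$ at each stage to accumulate the terms $\frac{(C(t-t_0))^n}{n!}$, which are then summed to $e^{C(t-t_0)}-1$; you instead introduce the running minimum $h(t)=\min_{|y|\le\eta}g(t,y)$, observe that it satisfies the closed integral inequality $h(t)\ge\sigma(t_0)+C\int_{t_0}^t h(s)\,ds$, and apply Gronwall once, then substitute back. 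Your route is shorter and avoids the bookkeeping of the iterates (Gronwall is, in effect, the summed form of the paper's iteration); the only points that need the care you already flagged are the continuity of $h$ in $t$ (so the Gronwall comparison function $H(t)=\sigma(t_0)+C\int_{t_0}^t h$ is $C^1$), the applicability of (K3) to $\phi=g(t,\cdot)\ge 0$ continuous with $\eta\ge\eta_0$, and the nonnegativity $\sigma(t_0)\ge 0$, $C>0$ so the lower-bound Gronwall runs in the right direction. The paper's iterative version has the mild advantage of being entirely self-contained (no appeal to an integral-form Gronwall lemma) and of displaying explicitly how (K3) enters at every order, but the content is the same.
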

\begin{proof}
\begin{equation*}
\begin{split}
u_{t}(t,x)
&=\int_{\mathbf{R}}K(x,y)u(t,y)dy-b(x)u(t,x)+f(x,u)\\
&\geq\int_{\mathbf{R}}K(x,y)u(t,y)dy-Bu(t,x).
\end{split}
\end{equation*}
Hence $\partial_{t}(e^{Bt}u(t,x))\geq e^{Bt}\int_{\mathbf{R}}K(x,y)u(t,y)dy>0$, i.e., $e^{Bt}u(t,x)$
is strictly increasing and
\begin{equation}\label{l4.3.1}
\begin{split}
e^{Bt}u(t,x)
&\geq I_{0}(x)+\int_{t_{0}}^{t}\int_{\mathbf{R}}K(x,y)e^{Bs}u(s,y)dyds,\\
\end{split}
\end{equation}
where $I_{0}(x)=e^{Bt_{0}}u(t_{0},x)$.
The monotonicity and (K3) yield that
\begin{equation*}
\begin{split}
e^{Bt}u(t,x)
&\geq I_{0}(x)+\int_{t_{0}}^{t}\int_{\mathbf{R}}K(x,y)e^{Bs}u(s,y)dyds\\
&\geq I_{0}(x)+\int_{t_{0}}^{t}\int_{\mathbf{R}}K(x,y)\chi_{\eta+\delta_{0}}(y)e^{Bt_{0}}u(t_{0},y)dyds\\
&\geq I_{0}(x)+e^{Bt_{0}}(t-t_{0})\int_{\mathbf{R}}K(x,y)\chi_{\eta+\delta_{0}}(y)u(t_{0},y)dy\\
&\geq I_{0}(x)+Ce^{Bt_{0}}(t-t_{0})\sigma(t_{0})
\end{split}
\end{equation*}
for $t\geq t_{0}$ and $|x|<\eta+\delta_{0}$, i.e.,
\begin{equation}\label{l4.3.2}
e^{Bt}u(t,x)\geq \chi_{\eta+\delta_{0}}(x)I_{1}(t,x)\ \forall\ t\geq t_{0}, x\in\mathbf{R},
\end{equation}
where $I_{1}(t,x)=I_{0}(x)+Ce^{Bt_{0}}(t-t_{0})\sigma(t_{0})$.

Using this, we can obtain from \eqref{l4.3.1} that
\begin{equation}\label{l4.3.4}
\begin{split}
e^{Bt}u(t,x)
&\geq I_{0}(x)+\int_{t_{0}}^{t}\int_{\mathbf{R}}K(x,y)e^{Bs}u(s,y)dyds\\
&\geq I_{0}(x)+\int_{t_{0}}^{t}\int_{\mathbf{R}}K(x,y)\chi_{\eta+\delta_{0}}(y)I_{1}(s,y)dyds\\
&=I_{0}(x)+\int_{t_{0}}^{t}\int_{\mathbf{R}}K(x,y)\chi_{\eta+\delta_{0}}(y)
 \big(I_{0}(y)+Ce^{Bt_{0}}(s-t_{0})\sigma(t_{0})\big)dyds\\
&=I_{0}(x)+\int_{t_{0}}^{t}\int_{\mathbf{R}}K(x,y)\chi_{\eta+\delta_{0}}(y)
 I_{0}(y)dy\\
&\quad+\int_{t_{0}}^{t}\int_{\mathbf{R}}K(x,y)\chi_{\eta+\delta_{0}}(y)Ce^{Bt_{0}}(s-t_{0})\sigma(t_{0})\big)dyds\\
&\geq I_{1}(t,x)+\sigma(t_{0})e^{Bt_{0}}C\frac{(t-t_{0})^{2}}{2}\int_{\mathbf{R}}K(x,y)\chi_{\eta+\delta_{0}}(y)dy\\
&\geq I_{1}(t,x)+\sigma(t_{0})e^{Bt_{0}}C^{2}\frac{(t-t_{0})^{2}}{2}\\
\end{split}
\end{equation}
for any $t\geq t_{0}, |x|<\eta+{\delta_{0}}$, i.e.,
\begin{equation*}
e^{Bt}u(t,x)\geq \chi_{\eta+\delta_{0}}(x)I_{2}(t,x)\ \forall\ t\geq t_{0}, x\in\mathbf{R},
\end{equation*}
where $I_{2}(t,x)=I_{1}(t,x)+\sigma(t_{0})e^{Bt_{0}}C^{2}\frac{(t-t_{0})^{2}}{2}.$

One can repeat this process to find that
\begin{equation*}
\begin{split}
e^{Bt}u(t,x)
&\geq e^{Bt_{0}}u(t_{0},x)+
\sigma(t_{0})e^{Bt_{0}}\sum_{n=1}^{\infty}\frac{(C(t-t_{0}))^{n}}{n!}\\
&=e^{Bt_{0}}\big(\sigma(t_{0})(e^{C(t-t_{0})}-1)+u(t_{0},x)\big)\ \forall\ t\geq t_{0}, |x|<\eta+{\delta_{0}}.
\end{split}
\end{equation*}
Therefore,
$$u(t,x)\geq e^{-B(t-t_{0})}\big(\sigma(t_{0})(e^{C(t-t_{0})}-1)+u(t_{0},x)\big)$$
for any $t\geq t_{0}, |x|<\eta+\delta_{0}$.
\end{proof}

\begin{proof}[Proof of Theorem \ref{thm4.1}]We only need to show that the theorem is valid when
$Q=[\tau,T]\times[-R,R]$.
We may assume that $u(0,x)\geq\sigma_{0}$ for $|x|\leq\eta_{0}$.
Otherwise, we can consider $u(\frac{\tau}{2},x)>0\ \forall x\in\mathbf{R}$ by Lemma \ref{lem4.2}.
First by Lemma \ref{lem4.3}
$$u(t,x)\geq e^{-Bt}\big(\sigma_{0}(e^{Ct}-1)+u(0,x)\big),\ \forall t\geq 0, |x|<\eta_{0}+\delta_{0}.$$
Let
$\sigma_{1}:=\min\limits_{|x|\leq\eta_{0}+\delta_{0}}u(t,x)\geq\sigma_{0}e^{-Bt}(e^{Ct}-1).$
Using Lemma \ref{lem4.3} again with $\eta=\eta_{0}+\delta_{0}$, we have
$$u(2t,x)\geq e^{-Bt}\big(\sigma_{1}(e^{Ct}-1)+u(t,x)\big),\ \forall t\geq 0, |x|<\eta_{0}+2\delta_{0}.$$
By induction, one can finally obtain that
$$u(nt,x)\geq e^{-Bt}\big(\sigma_{n-1}(e^{Ct}-1)+u((n-1)t,x)\big),\ \forall t\geq 0, |x|<\eta_{0}+n\delta_{0},\ n=1,2,\cdots,$$
where
$\sigma_{n}:=\min\limits_{|x|\leq\eta_{0}+n\delta_{0}}u(nt,x)\geq\sigma_{0}e^{-nBt}(e^{Ct}-1)^{n}.$
Therefore,
$$u(nt,x)\geq\sigma_{0}e^{-nBt}(e^{Ct}-1)^{n},\ \forall t\geq 0, |x|<\eta_{0}+n\delta_{0}, n=1,2,\cdots.$$
In particular,
$u(nt,nx)\geq\sigma_{0}e^{-nBt}(e^{Ct}-1)^{n},$ i.e.,
\begin{equation}\label{t4.1.1}
\frac{1}{n}\ln u(nt,nx)\geq\frac{1}{n}\ln\sigma_{0}-Bt+\ln(e^{Ct}-1)
\geq \ln\sigma_{0}-Bt+\ln(e^{Ct}-1)
\end{equation}
for $t\geq 0, |x|<\delta_{0}, n=1,2,\cdots.$ Now consider $t\in[\tau,T], |x|\leq R$.
Let $n_{\varepsilon}=({\frac{R}{\varepsilon\delta_{0}}+1})$,
$t_{\varepsilon}=\frac{t}{\varepsilon n_{\varepsilon}}$
and $x_{\varepsilon}=\frac{x}{\varepsilon n_{\varepsilon}}$. Then
$\varepsilon n_{\varepsilon}\in(\frac{R}{\delta_{0}},\frac{R}{\delta_{0}}+\varepsilon)$, and
$|x_{\varepsilon}|=\frac{|x|}{\varepsilon n_{\varepsilon}}\leq\frac{R}{\varepsilon n_{\varepsilon}}\leq \delta_{0}$. Using \eqref{t4.1.1}, we have
\begin{equation*}
\begin{split}
\varepsilon\ln u(\frac{t}{\varepsilon},\frac{x}{\varepsilon})
&=\varepsilon n_{\varepsilon}\frac{1}{n_{\varepsilon}}\ln u(n_{\varepsilon}t_{\varepsilon},n_{\varepsilon}x_{\varepsilon})\\
&\geq\varepsilon n_{\varepsilon}
 \big(\frac{1}{n_{\varepsilon}}\ln\sigma_{0}-Bt_{\varepsilon}+\ln(e^{Ct}-1)\big)\\
&\geq \varepsilon\ln\sigma_{0}-Bt+
 \varepsilon n_{\varepsilon}\ln(e^{Ct}-1)\\
&\geq -C
\end{split}
\end{equation*}
for some positive constant $C$ depends on $\tau,T,R$. Then we are done since
$z_{\varepsilon}(t,x)=\varepsilon\ln u(\frac{t}{\varepsilon},\frac{x}{\varepsilon})<0$.
\end{proof}

From Theorem \ref{thm4.1}, we know that
$$z_{\ast}(t,x)=\displaystyle{\liminf_{(s,y)\rightarrow(t,x),\varepsilon\to0}}z_{\varepsilon}(s,y)\leq0$$
is well defined on $(0,+\infty)\times\mathbf{R}$.
In the following content of this subsection we want to find a
Hamilton-Jacobi equation which is related to $z_{\ast}.$ Denote $B_{r}(t_{0},x_{0})=\{(t,x)|\ |t-t_{0}|^{2}+|x-x_{0}|^{2}<r^{2}\}$.

\begin{lemma}\label{lem4.4}
For any $(t_{0},x_{0})\in int\{z_{\ast}=0\}$, there exists $\theta>0$ such that $B_{\theta}(t_{0},x_{0})\in int\{z_{\ast}=0\}$ and
$\displaystyle{\liminf_{\varepsilon\to0}}\inf\limits_{(\tau,\xi)\in B_{\theta}(t_{0},x_{0})}v_{\varepsilon}(\tau,\xi)>0$.
\end{lemma}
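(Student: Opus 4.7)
The first conclusion is essentially by definition: $\operatorname{int}\{z_*=0\}$ is open and contains $(t_0,x_0)$, so I pick $\theta>0$ small enough that $\overline{B_{2\theta}(t_0,x_0)}\subset\operatorname{int}\{z_*=0\}$; this automatically gives $B_\theta(t_0,x_0)\subset\operatorname{int}\{z_*=0\}$.

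For the positive lower bound on $v_\varepsilon$, the first substantive step is a compactness argument promoting the pointwise condition $z_*=0$ on $\overline{B_{2\theta}}$ into a uniform one. Unwinding the definition of the relaxed $\liminf$, for each $(t,x)\in\overline{B_{2\theta}}$ and each $\delta>0$ there exist $r(t,x)>0$ and $\varepsilon(t,x)>0$ with $z_\varepsilon(s,y)>-\delta$ on $B_{r(t,x)}(t,x)\times(0,\varepsilon(t,x))$. A finite subcover of the compact set $\overline{B_{2\theta}}$ yields a single threshold $\varepsilon_1>0$ such that $z_\varepsilon>-\delta$ throughout $\overline{B_{2\theta}}$ for every $\varepsilon\in(0,\varepsilon_1)$; equivalently,
\[
v_\varepsilon(s,y)\ \geq\ e^{-\delta/\varepsilon}\quad\text{on }\overline{B_{2\theta}(t_0,x_0)}.
\]

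The main obstacle, and the heart of the argument, is to upgrade this $e^{-\delta/\varepsilon}\to 0$ bound into a uniform positive lower bound $v_\varepsilon\geq c>0$ on the smaller ball $B_\theta$. For this I translate to the original variables $s=t/\varepsilon$, $y=x/\varepsilon$ and iterate Lemma~\ref{lem4.3} in the spirit of the proof of Theorem~\ref{thm4.1}: for a target $(t_1,x_1)\in B_\theta$, the previous estimate gives $u(s_1-\mu/\varepsilon,y)\geq e^{-\delta/\varepsilon}$ on a microscopic ball of radius $\sim\theta/\varepsilon$ around $y_1=x_1/\varepsilon$ for some $\mu\in(0,\theta)$. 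Iterating Lemma~\ref{lem4.3} over $n$ microscopic steps of size $t$ with $nt=\mu/\varepsilon$ should yield
\[
u(s_1,y_1)\ \geq\ e^{-\delta/\varepsilon}\bigl(e^{-Bt}(e^{Ct}-1)\bigr)^{n},
\]
whose logarithm behaves like $\varepsilon^{-1}\bigl(\mu(C-B)-\delta\bigr)$ as $t\to\infty$. To absorb the possible deficit when $C\leq B$, I plan to bake the KPP reaction $f(x,u)\geq\inf_x f(x,u)>0$ on $(0,1)$ into the iteration via an ODE sub-solution argument of Corollary~\ref{cor4.1} type, which effectively replaces $B$ by $B-\kappa$ for a reaction-dependent $\kappa>0$; the amplification exponent $\mu(C-B+\kappa)-\delta$ is then positive for all sufficiently small $\delta$, forcing $u(s_1,y_1)$ above any fixed positive level and then saturating via $u\leq 1$. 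Uniformity of this construction over $(t_1,x_1)\in B_\theta$ delivers $\liminf_{\varepsilon\to 0}\inf_{B_\theta}v_\varepsilon>0$, the only delicate point being to execute the ODE comparison cleanly despite the spatial localization of the initial lower bound (which should be achievable by applying Remark~\ref{re4.1} to a cut-off version of the ODE profile).
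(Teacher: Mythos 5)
Your opening steps (openness of $\operatorname{int}\{z_{\ast}=0\}$ and the compactness argument giving $z_{\varepsilon}>-\delta$, i.e. $v_{\varepsilon}\geq e^{-\delta/\varepsilon}$, uniformly on $\overline{B_{2\theta}}$ for small $\varepsilon$) are fine and match how the paper begins. The gap is in the core step, the upgrade from $e^{-\delta/\varepsilon}$ to a uniform positive bound. Iterating Lemma~\ref{lem4.3} costs a factor $e^{-Bt}(e^{Ct}-1)$ per step, and the constant $C$ from (K3) always satisfies $C<b(x)\leq B$ (take $\phi\equiv1$ in (K3)), so the deficit $B-C$ is strictly positive. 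Your fix is to ``bake in'' the reaction and replace $B$ by $B-\kappa$ with $\kappa$ of the order of $\inf_{x}f_{s}^{\prime}(x,0)$, claiming the amplification exponent $\mu(C-B+\kappa)-\delta$ is then positive for small $\delta$. But nothing in the hypotheses forces $\kappa>B-C$; (K4) only gives positivity of $\int K(x,x-\xi)e^{-p\xi}d\xi-b(x)+f_{s}^{\prime}(x,0)$, not of $C-B+\kappa$, so the exponent can remain negative and the iteration again produces only bounds of the form $e^{-c/\varepsilon}$ --- exactly the Theorem~\ref{thm4.1} conclusion, which is not enough here. Two further problems: (K3), hence Lemma~\ref{lem4.3}, is stated for balls centered at the origin and is not translation invariant, so re-centering the iteration at $x_{1}/\varepsilon$ is unjustified; and the ``ODE sub-solution of Corollary~\ref{cor4.1} type'' does not transfer, because there the initial datum is a spatial constant so the nonlocal terms cancel identically, whereas with a lower bound localized on a ball of radius $\theta/\varepsilon$ over a time horizon $\mu/\varepsilon\to\infty$ you must control the boundary/tail contributions (via (K2) or a sub-solution profile tied to (K4)); that is precisely the substantive construction your sketch defers and does not supply.

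For comparison, the paper avoids any iteration in the original variables: it runs a viscosity-type touching-point argument directly on the $z_{\varepsilon}$ equation. Fixing $(\tau,\xi)$ near $(t_{0},x_{0})$ and the test function $\phi=-(|t-\tau|^{2}+|x-\xi|^{2})$, the minimum point $(t_{\varepsilon},x_{\varepsilon})$ of $z_{\varepsilon}-\phi$ converges to $(\tau,\xi)$; plugging the equation at that point and using the uniform integrability (K2), the nonlocal term cancels $b_{\varepsilon}(x_{\varepsilon})$ up to $o(1)$, so $f_{\varepsilon}(x_{\varepsilon},v_{\varepsilon})/v_{\varepsilon}=o(1)$. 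The KPP regularity bound $f(x,s)/s\geq f_{s}^{\prime}(x,0)-Cs^{\gamma}$ then forces $\liminf_{\varepsilon\to0}v_{\varepsilon}(t_{\varepsilon},x_{\varepsilon})\geq\bigl(\inf_{x}f_{s}^{\prime}(x,0)/C\bigr)^{1/\gamma}>0$, and the minimality of $z_{\varepsilon}-\phi$ gives $v_{\varepsilon}(\tau,\xi)\geq v_{\varepsilon}(t_{\varepsilon},x_{\varepsilon})$, uniformly in $(\tau,\xi)$. If you want to salvage your route, you would need an interior growth estimate exploiting that the kernel mass outside a large ball is uniformly small (a sub-solution of the form $m\,e^{\kappa t}\chi(x)$ with a slowly decaying cutoff, controlled through (K2)/(K4)), which amounts to a different and considerably longer proof than the one you outlined.
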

\begin{proof}
For any $(t_{0},x_{0})\in \text{int}\{z_{\ast}=0\}$, we have
$z_{\ast}(t,x)=0$ for any $(t,x)\in B_{2\delta}(t_{0},x_{0})$ for some $\delta>0$.
Hence it is easy to find that $z_{\varepsilon}(t,x)\to0$ as $\varepsilon\to0$ uniformly in
$B_{\delta}(t_{0},x_{0})$. Now for any $(\tau,\xi)\in B_{\frac{\delta}{4}}(t_{0},x_{0})$, let $\phi(t,x;\tau,\xi)=-(|t-\tau|^{2}+|x-\xi|^{2})$. For any $\eta\in(0,\frac{\delta^{2}}{16})$, there exists
$\varepsilon_{0}$ (only depending on $\eta$ and $z_{\varepsilon}$) such that $-\eta<z_{\varepsilon}(t,x)\leq0$
for all $(t,x)\in B_{\delta}(t_{0},x_{0})$ and $0<\varepsilon\leq\varepsilon_{0}$. Therefore,
$$\big(z_{\varepsilon}(t,x)-\phi(t,x;\tau,\xi)\big)
>\big(|t-\tau|^{2}+|x-\xi|^{2}\big)-\eta\geq\frac{\delta^{2}}{2}>0\ \ \forall\ (t,x)\in\partial B_{\delta}(t_{0},x_{0}),$$
$$z_{\varepsilon}(t,x)-\phi(t,x;\tau,\xi)\big|_{(t,x)=(\tau,\xi)}=z_{\varepsilon}(\tau,\xi)\leq0.$$
That is to say, $z_{\varepsilon}(t,x)-\phi(t,x;\tau,\xi)$ reaches
its minimum at some point, say $(t_{\varepsilon}(\tau,\xi),x_{\varepsilon}(\tau,\xi))$, over $B_{\delta}(t_{0},x_{0})$. Note that for any $r\in(\sqrt{\eta},\delta], \varepsilon\in(0,\varepsilon_{0})$. We have
$$\big(z_{\varepsilon}(t,x)-\phi(t,x;\tau,\xi)\big)>0
\geq\min\limits_{B_{\delta}(t_{0},x_{0})}\big(z_{\varepsilon}(t,x)-\phi(t,x;\tau,\xi)\big)\ \ \forall(t,x)\in B_{\delta_{0}}(t_{0},x_{0})\setminus B_{r}(\tau,\xi),$$
which means that
$(t_{\varepsilon}(\tau,\xi),x_{\varepsilon}(\tau,\xi))\in\overline{B_{\sqrt{\eta}}(\tau,\xi)}$, i.e.,
$$|t_{\varepsilon}(\tau,\xi)-\tau|^{2}+|x_{\varepsilon}(\tau,\xi)-\xi|^{2}\leq\eta,\ \forall(\tau,\xi)\in B_{\frac{\delta}{4}}(t_{0},x_{0}).$$
From this, one can easily find that $(t_{\varepsilon}(\tau,\xi),x_{\varepsilon}(\tau,\xi))\to(\tau,\xi)$
as $\varepsilon\to0$ uniformly for $(\tau,\xi)\in B_{\frac{\delta}{4}}(t_{0},x_{0})$.
We will write $t_{\varepsilon}(\tau,\xi), x_{\varepsilon}(\tau,\xi))$ and $\phi(t,x;\tau,\xi)$ by
$t_{\varepsilon}, x_{\varepsilon}$ and $\phi(t,x)$ for simplicity. Obviously, we have
\begin{equation}\label{l4.4.1}
\partial_{t}z_{\varepsilon}(t_{\varepsilon}, x_{\varepsilon})-\partial_{t}\phi(t_{\varepsilon}, x_{\varepsilon})=0,
\end{equation}
\begin{equation}\label{l4.4.2}
z_{\varepsilon}(t,y)-\phi(t,y)\geq z_{\varepsilon}(t_{\varepsilon}, x_{\varepsilon})-\phi(t_{\varepsilon}, x_{\varepsilon}),\ \forall(t,y)\in B_{\delta}(t_{0},x_{0}).
\end{equation}
Then at $(t_{\varepsilon},x_{\varepsilon})$, we have
\begin{equation}\label{l4.4.3}
\begin{split}
\partial_{t} z_{\varepsilon}(t,x)
&\geq\frac{1}{\varepsilon}\int_{B_{\delta}(x_{0})}K_{\varepsilon}(x,y)\exp\big(\frac{z_{\varepsilon}(t,y)-z_{\varepsilon}(t,x)}{\varepsilon}\big)dy
-b_{\varepsilon}(x)+\frac{f_{\varepsilon}(x,v_{\varepsilon})}{v_{\varepsilon}}\\
&\geq\frac{1}{\varepsilon}\int_{B_{\delta}(x_{0})}K_{\varepsilon}(x,y)
\exp\big(\frac{\phi(t,y)-\phi(t,x)}{\varepsilon}\big)dy
-b_{\varepsilon}(x)+\frac{f_{\varepsilon}(x,v_{\varepsilon})}{v_{\varepsilon}}\\
&\geq\frac{1}{\varepsilon}\int_{B_{\frac{\delta}{2}}(x_{\varepsilon})}K_{\varepsilon}(x,y)
\exp\big(\frac{\phi(t,y)-\phi(t,x)}{\varepsilon}\big)dy
-b_{\varepsilon}(x)+\frac{f_{\varepsilon}(x,v_{\varepsilon})}{v_{\varepsilon}},
\end{split}
\end{equation}
since $B_{\delta}(x_{0})\supset B_{\frac{\delta}{2}}(x_{\varepsilon})$ for $\varepsilon$ small.
Combining this with \eqref{l4.4.1}, we have
\begin{equation}\label{l4.4.4}
0<\frac{f_{\varepsilon}(x_{\varepsilon},v_{\varepsilon})}{v_{\varepsilon}}\leq\partial_{t}\phi(t_{\varepsilon},x_{\varepsilon})
+b_{\varepsilon}(x_{\varepsilon})-
\frac{1}{\varepsilon}\int_{B_{\frac{\delta}{2}}(x_{\varepsilon})}K_{\varepsilon}(x_{\varepsilon},y)
\exp\big(\frac{\phi(t_{\varepsilon},y)-\phi(t_{\varepsilon},x_{\varepsilon})}{\varepsilon}\big)dy.
\end{equation}

Claim: $F_{\varepsilon}(x):=b_{\varepsilon}(x_{\varepsilon})-
\frac{1}{\varepsilon}\int_{B_{\frac{\delta}{2}}(x_{\varepsilon})}K_{\varepsilon}(x_{\varepsilon},y)
\exp\big(\frac{\phi(t_{\varepsilon},y)-\phi(t_{\varepsilon},x_{\varepsilon})}{\varepsilon}\big)dy\to0$
as $\varepsilon\to0.$

Proof of Claim:
\begin{equation}\label{l4.4.5}
\begin{split}
|F_{\varepsilon}(x)|
&=|\frac{1}{\varepsilon}\int_{\mathbf{R}}K_{\varepsilon}(x_{\varepsilon},y)dy
-\frac{1}{\varepsilon}\int_{B_{\frac{\delta}{2}}(x_{\varepsilon})}K_{\varepsilon}(x_{\varepsilon},y)
\exp\big(\frac{(x_{\varepsilon}-y)(x_{\varepsilon}-\xi+y-\xi)}{\varepsilon}\big)dy|\\
&=|\int_{\mathbf{R}}K(\frac{x_{\varepsilon}}{\varepsilon},\frac{x_{\varepsilon}}{\varepsilon}-y)dy-
\int_{B_{\frac{\delta}{2\varepsilon}}(0)}K(\frac{x_{\varepsilon}}{\varepsilon},\frac{x_{\varepsilon}}{\varepsilon}-y)
\exp\big(2y(x_{\varepsilon}-\xi)-\varepsilon y^{2}\big)dy|.
\end{split}
\end{equation}
For any $\sigma>0$, there exists $R>0$ such that
\begin{equation}
\int_{B_{R}^{C}(0)}K(\frac{x_{\varepsilon}}{\varepsilon},\frac{x_{\varepsilon}}{\varepsilon}-y)dy<\sigma,
\end{equation}
and
\begin{equation}
\int_{B_{R}^{C}(0)}K(\frac{x_{\varepsilon}}{\varepsilon},\frac{x_{\varepsilon}}{\varepsilon}-y)
\exp\big(2y(x_{\varepsilon}-\xi)-\varepsilon y^{2}\big)dy
\leq\int_{B_{R}^{C}(0)}K(\frac{x_{\varepsilon}}{\varepsilon},\frac{x_{\varepsilon}}{\varepsilon}-y)
\exp(\frac{\delta y}{2})dy\leq\sigma
\end{equation}
for any $\varepsilon\in(0,\varepsilon_{0})$ since $K(x,x-\cdot)e^{-p\cdot}$ is uniformly integrable w.r.t. $x$. Then
\begin{equation}\label{l4.4.6}
\begin{split}
|F_{\varepsilon}(x)|
&\leq|\int_{B_{R}(0)}K(\frac{x_{\varepsilon}}{\varepsilon},\frac{x_{\varepsilon}}{\varepsilon}-y)-
K(\frac{x_{\varepsilon}}{\varepsilon},\frac{x_{\varepsilon}}{\varepsilon}-y)
\exp\big(2y(x_{\varepsilon}-\xi)-\varepsilon y^{2}\big)dy|+2\sigma\\
&=\int_{B_{R}(0)}K(\frac{x_{\varepsilon}}{\varepsilon},\frac{x_{\varepsilon}}{\varepsilon}-y)dy
\sup\limits_{y\in B_{R}(0)}|
\big(1-\exp\big(2y(x_{\varepsilon}-\xi)-\varepsilon y^{2}\big)\big)|+2\sigma\\
&\leq3\sigma
\end{split}
\end{equation}
for $\varepsilon$ small enough since $\exp\big(2y(x_{\varepsilon}-\xi)-\varepsilon y^{2}\big)\to1$ as
$\varepsilon\to0$ uniformly in $y\in B_{R}(0)$. Thus the proof of claim is finished.

Note that $\partial_{t}\phi(t_{\varepsilon},x_{\varepsilon})=2(t_{\varepsilon}-\tau)\to0$ as $\varepsilon\to0$.
Then from \eqref{l4.4.4}, we have $0<\frac{f_{\varepsilon}(x_{\varepsilon},v_{\varepsilon})}{v_{\varepsilon}}\leq o(1)$ as $\varepsilon\to0$ uniformly for $(\tau,\xi)\in B_{\frac{\delta}{4}}(t_{0},x_{0})$.
On the other hand, $f(x,\cdot)\in\mathcal{C}^{1+\gamma}([0,1])$
uniformly with respect to $x\in\mathbf{R}$ yields that there exists $C>0$ such that
\begin{equation}\label{l4.4.7}
\frac{f(x,s)}{s}
\geq f^{\prime}_{s}(x,0)-Cs^{\gamma}\ \forall x\in\mathbf{R}, s\in[0,1],
\end{equation}
which yields
$$0<f^{\prime}_{s}(\frac{x_{\varepsilon}}{\varepsilon},0)\leq Cv^{\gamma}_{\varepsilon}+
\frac{f_{\varepsilon}(x_{\varepsilon},v_{\varepsilon})}{v_{\varepsilon}}
\leq Cv^{\gamma}_{\varepsilon}+o(1)\ \text{as}\ \varepsilon\to 0.$$
Then we have
$${C}^{\frac{1}{\gamma}}\liminf\limits_{\varepsilon\to0}v_{\varepsilon}(t_{\varepsilon},x_{\varepsilon})
\geq{(\inf\limits_{x\in\mathbf{R}}f^{\prime}_{s}(x,0))}^{\frac{1}{\gamma}}>0,$$
where the last inequality follows from $0<\inf \limits_{x\in\mathbf{R}}f(x,s)\leq f(x,s)\leq f_{s}^{\prime}(x,0)s$ for any $s\in(0,1)$.
Furthermore, by the definition of $(t_{\varepsilon},x_{\varepsilon})$, we have
$$z_{\varepsilon}(t_{\varepsilon},x_{\varepsilon})\leq
z_{\varepsilon}(t_{\varepsilon},x_{\varepsilon})-\phi(t_{\varepsilon},x_{\varepsilon})\leq
z_{\varepsilon}(\tau,\xi)-\phi(\tau,\xi)=z_{\varepsilon}(\tau,\xi),$$ which yields that
$v_{\varepsilon}(\tau,\xi)\geq v_{\varepsilon}(t_{\varepsilon}(\tau,\xi),x_{\varepsilon}(\tau,\xi))$
$\forall(\tau,\xi)\in B_{\frac{\delta}{4}}(t_{0},x_{0})$. Thus
$$\displaystyle{\liminf_{\varepsilon\to0}}\inf\limits_{(\tau,\xi)\in B_{\theta}(t_{0},x_{0})}v_{\varepsilon}(\tau,\xi)\geq
\liminf\limits_{\varepsilon\to0}v_{\varepsilon}(t_{\varepsilon},x_{\varepsilon})>0.$$
\end{proof}

\begin{lemma}\label{lem4.5}
The lower semi-continuous function $z_{\ast}$ is a viscosity supersolution of
$$\max\{\partial_{t}z_{\ast}-\underline{H}(\partial_{x}z_{\ast}),z_{\ast}\}\geq0\ \ \ \
(t,x)\in(0,+\infty)\times(0,+\infty).$$
\end{lemma}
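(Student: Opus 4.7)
The plan is to verify the viscosity supersolution property via a perturbed test function argument that leverages the eigenfunctions of $L_p$ on $I_R$. Fix any $\varphi\in C^{1}$ such that $z_\ast-\varphi$ achieves a strict local minimum at a point $(t_0,x_0)\in(0,+\infty)\times(0,+\infty)$ with $z_\ast(t_0,x_0)=\varphi(t_0,x_0)$; strictness may be arranged by adding a small quadratic penalty. If $z_\ast(t_0,x_0)=0$ the supersolution inequality is immediate, so one may assume $z_\ast(t_0,x_0)<0$. Writing $p:=\partial_{x}\varphi(t_0,x_0)$ and $\tau:=\partial_{t}\varphi(t_0,x_0)$, the task reduces to showing $\tau\geq\underline{H}(p)$.

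Given $\eta>0$, the relation $\underline{H}(p)=\lim_{R\to+\infty}\underline{{\lambda}_{1}}(p,R)$ and Definition \ref{def2.1} furnish $R>0$ large and $\phi\in\mathcal{A}$ such that $L_p\phi(\xi)\geq(\underline{H}(p)-\eta)\phi(\xi)$ on $I_R$; equivalently,
\[
\int_{\mathbf{R}}K(\xi,\zeta)e^{p(\zeta-\xi)}\phi(\zeta)\,d\zeta\geq\bigl(\underline{H}(p)-\eta+a(\xi)\bigr)\phi(\xi),\qquad \xi\in I_R.
\]
Because $\phi$ is bounded above and bounded away from zero, the perturbed test function $\psi_\varepsilon(t,x):=\varphi(t,x)+\varepsilon\ln\phi(x/\varepsilon)$ converges to $\varphi$ uniformly on compacta. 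Combined with the strict-minimum property and the lower semicontinuity of $z_\ast$, this forces $z_\varepsilon-\psi_\varepsilon$ to attain a minimum on a fixed closed ball around $(t_0,x_0)$ at some interior point $(t_\varepsilon,x_\varepsilon)\to(t_0,x_0)$; along a subsequence, $z_\varepsilon(t_\varepsilon,x_\varepsilon)\to z_\ast(t_0,x_0)<0$ as well.

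At such a minimum, $\partial_t z_\varepsilon(t_\varepsilon,x_\varepsilon)=\partial_t\varphi(t_\varepsilon,x_\varepsilon)$ and $z_\varepsilon(t_\varepsilon,y)-z_\varepsilon(t_\varepsilon,x_\varepsilon)\geq\psi_\varepsilon(t_\varepsilon,y)-\psi_\varepsilon(t_\varepsilon,x_\varepsilon)$ for $y$ near $x_\varepsilon$. Inserting this into \eqref{zHeq}, restricting the integration to a small ball, and changing variables $\tilde y=y/\varepsilon$ gives
\[
\partial_t\varphi(t_\varepsilon,x_\varepsilon)\geq\frac{1}{\phi(\tilde x_\varepsilon)}\int K(\tilde x_\varepsilon,\tilde y)\,\exp\!\Bigl(\tfrac{\varphi(t_\varepsilon,\varepsilon\tilde y)-\varphi(t_\varepsilon,\varepsilon\tilde x_\varepsilon)}{\varepsilon}\Bigr)\phi(\tilde y)\,d\tilde y-b(\tilde x_\varepsilon)+\frac{f(\tilde x_\varepsilon,v_\varepsilon)}{v_\varepsilon}+o(1),
\]
where $\tilde x_\varepsilon=x_\varepsilon/\varepsilon$ and the $o(1)$ absorbs the discarded far-field. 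The subsequential convergence $z_\varepsilon(t_\varepsilon,x_\varepsilon)\to z_\ast(t_0,x_0)<0$ forces $v_\varepsilon(t_\varepsilon,x_\varepsilon)=e^{z_\varepsilon/\varepsilon}\to0$ exponentially, so by \eqref{l4.4.7} one obtains $-b(\tilde x_\varepsilon)+f(\tilde x_\varepsilon,v_\varepsilon)/v_\varepsilon\geq-a(\tilde x_\varepsilon)+o(1)$. A Taylor expansion of $\varphi$ on the scale $|\tilde y-\tilde x_\varepsilon|\leq M$ shows the integrand approaches $K(\tilde x_\varepsilon,\tilde y)e^{p(\tilde y-\tilde x_\varepsilon)}\phi(\tilde y)/\phi(\tilde x_\varepsilon)$. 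Since $x_0>0$ implies $\tilde x_\varepsilon\to+\infty$, eventually $\tilde x_\varepsilon\in I_R$, and the eigenfunction inequality above bounds the integral from below by $\underline{H}(p)-\eta+a(\tilde x_\varepsilon)$. Assembling,
\[
\tau\geq\bigl(\underline{H}(p)-\eta+a(\tilde x_\varepsilon)\bigr)-a(\tilde x_\varepsilon)+o(1)=\underline{H}(p)-\eta+o(1),
\]
and sending $\eta\to0$ yields the desired inequality.

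The principal obstacle is the uniform-in-$\varepsilon$ limit passage in the nonlocal integral. One must decompose the integration domain at a scale $|\tilde y-\tilde x_\varepsilon|=M$: on the near field the Taylor expansion of $\varphi$ is reliable, converting $e^{(\varphi-\varphi)/\varepsilon}$ into $e^{p(\tilde y-\tilde x_\varepsilon)}$ with negligible error, while on the far field the dominated bound $e^{L|\tilde y-\tilde x_\varepsilon|}$ (with $L$ a local Lipschitz constant of $\varphi$) together with the uniform integrability (K2) makes the tail contribution arbitrarily small independently of $\tilde x_\varepsilon$. A secondary technicality is that the contact sequence $(t_\varepsilon,x_\varepsilon)$ converges only along a subsequence, which is however precisely what is needed to synchronise $v_\varepsilon\to0$ with the estimates, and because the final inequality $\tau\geq\underline{H}(p)$ is independent of the chosen subsequence, the supersolution property follows unconditionally.
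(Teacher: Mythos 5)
Your proposal is correct and follows essentially the same route as the paper's proof: reduce to the case $z_{\ast}(t_{0},x_{0})<0$, perturb the test function by $\varepsilon\ln\psi(x/\varepsilon)$ with $\psi\in\mathcal{A}$ an approximate generalized principal eigenfunction of $L_{p}$ on $I_{R}$, pass to contact points $(t_{\varepsilon},x_{\varepsilon})\to(t_{0},x_{0})$ with $z_{\varepsilon}(t_{\varepsilon},x_{\varepsilon})\to z_{\ast}(t_{0},x_{0})<0$ (so $v_{\varepsilon}\to0$ and $f/v\to f_{s}^{\prime}(\cdot,0)$), and control the nonlocal term by the locally uniform convergence of the rescaled exponent to $e^{p(\tilde y-\tilde x_{\varepsilon})}$ together with the (K2) tail estimate, before sending the approximation parameters to zero. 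The only differences are cosmetic (choosing $R$ so that $\underline{\lambda}_{1}(p,R)\geq\underline{H}(p)-\eta$ at the outset instead of taking $\mu\to0$ and then $R\to+\infty$, and working in microscopic variables), so no further comment is needed.
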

\begin{proof}
Note that $z_{\ast}\leq0$, hence we only need to show that
$$\partial_{t}z_{\ast}(\tau,x)-\underline{H}(\partial_{x}z_{\ast}(\tau,x))\geq0\ \ \
(t,x)\in\{(t,x)\in(0,+\infty)\times(0,+\infty)|z_{\ast}(t,x)<0\}$$
in the sense of viscosity solution.
For a smooth function $\phi$ defined on $(0,+\infty)\times(0,+\infty)$,
assume that $z_{\ast}-\phi$ reaches its strict minimum at $(t_{0},x_{0})$ over $\overline{B_{\delta}(t_{0},x_{0})}$, with $z_{\ast}(t_{0},x_{0})<0$.
Then we need to show that
$$\partial_{t}\phi(t_{0},x_{0})-\underline{H}(\partial_{x}\phi(t_{0},x_{0}))\geq0.$$
Denote $p:=\partial_{x}\phi(t_{0},x_{0})$. Fix some $R\in\mathbf{R}$ large enough. For any $\mu>0$ small, there exists $\psi\in\mathcal{A}$ such that
\begin{equation}\label{l4.5.1}
L_{p}\psi\geq(\underline{{\lambda}_{1}}(p,R)-\mu)\psi\ \text{on}\ I_{R}.
\end{equation}
Let $\beta_{\varepsilon}(x):=\varepsilon\ln\psi_{\varepsilon}(x)$, where $\psi_{\varepsilon}(x)=\psi(\frac{x}{\varepsilon})$.
Then $\beta_{\varepsilon}(x)$ satisfies
\begin{equation}\label{l4.5.2}
\frac{1}{\varepsilon}\int_{\mathbf{R}}
K_{\varepsilon}(x,y)\exp\big(\frac{\beta_{\varepsilon}(y)-\beta_{\varepsilon}(x)+p(y-x)}{\varepsilon}\big)dy
-b_{\varepsilon}(x)+f_{s}^{\prime}(\frac{x}{\varepsilon},0)\geq \underline{{\lambda}_{1}}(p,R)-\mu
\end{equation}
on $I_{\varepsilon R}$ by \eqref{l4.5.1}. Moreover, $\beta_{\varepsilon}(x)\to0$ as $\varepsilon\to0$
locally uniform with respect to $x$ since $\psi\in\mathcal{A}$. After a similar argument to
\cite[Proposition 4.3]{B2}, there exist $\varepsilon_{n}$, $(t_{n},x_{n})\in B_{\delta}(t_{0},x_{0})$
such that $(\varepsilon_{n},t_{n},x_{n})\to(0,t_{0},x_{0})$
and $z_{\varepsilon_{n}}(t_{n},x_{n})\to z_{\ast}(t_{0},x_{0})$ as $n\to+\infty$, and $z_{\varepsilon_{n}}-\phi-\varepsilon_{n}\ln\psi_{\varepsilon_{n}}$ reaches its minimum at
$(t_{n},x_{n})$ over $\overline{B_{\delta}(t_{0},x_{0})}$ for $n\geq n_{0}$, i.e.,
$$z_{\varepsilon_{n}}(t,x)-\phi(t,x)-\beta_{\varepsilon_{n}}(x)\geq
z_{\varepsilon_{n}}(t_{n},x_{n})-\phi(t_{n},x_{n})-\beta_{\varepsilon_{n}}(x_{n})\ \
\forall (t,x)\in\overline{B_{\delta}(t_{0},x_{0})}.$$
Hence one can obtain
\begin{equation}\label{l4.5.3}
\begin{split}
\partial_{t}\phi(t_{n},x_{n})
&=\partial_{t}z_{\varepsilon_{n}}(t_{n},x_{n})\\
&=\frac{1}{\varepsilon_{n}}\int_{\mathbf{R}}
K_{\varepsilon_{n}}(x_{n},y)\exp\big(\frac{z_{\varepsilon_{n}}(t_{n},y)-z_{\varepsilon_{n}}(t_{n},x_{n})}{\varepsilon_{n}}\big)dy
-b_{\varepsilon_{n}}(x_{n})+\frac{f(\frac{x_{n}}{\varepsilon_{n}},v_{\varepsilon_{n}})}{v_{\varepsilon_{n}}}\\
&\geq\frac{1}{\varepsilon_{n}}\int_{B_{\delta}(x_{0})}
K_{\varepsilon_{n}}(x_{n},y)\exp\big(\frac{z_{\varepsilon_{n}}(t_{n},y)-z_{\varepsilon_{n}}(t_{n},x_{n})}{\varepsilon_{n}}\big)dy
-b_{\varepsilon_{n}}(x_{n})+\frac{f(\frac{x_{n}}{\varepsilon_{n}},v_{\varepsilon_{n}})}{v_{\varepsilon_{n}}}\\
&\geq\frac{1}{\varepsilon_{n}}\int_{B_{\delta}(x_{0})}
K_{\varepsilon_{n}}(x_{n},y)\exp\big(\frac{\phi(t_{n},y)+\beta_{\varepsilon_{n}}(y_{n})
 -\phi(t_{n},x_{n})-\beta_{\varepsilon_{n}}(x_{n})}{\varepsilon_{n}}\big)dy\\
&\quad-b_{\varepsilon_{n}}(x_{n})+\frac{f(\frac{x_{n}}{\varepsilon_{n}},v_{\varepsilon_{n}})}{v_{\varepsilon_{n}}}.
\end{split}
\end{equation}
Note that $x_{n}\in I_{\varepsilon R}$ and $B_{\delta}(x_{0})\supset B_{\frac{\delta}{2}}(x_{n})$
for $n$ large enough, hence \eqref{l4.5.2} and \eqref{l4.5.3} yield that
\begin{equation}\label{l4.5.4}
\begin{split}
\partial_{t}\phi(t_{n},x_{n})
&\geq\frac{1}{\varepsilon_{n}}\int_{B_{\frac{\delta}{2}}(x_{n})}
K_{\varepsilon_{n}}(x_{n},y)\exp\big(\frac{\phi(t_{n},y)+\beta_{\varepsilon_{n}}(y_{n})
 -\phi(t_{n},x_{n})-\beta_{\varepsilon_{n}}(x_{n})}{\varepsilon_{n}}\big)dy\\
&\quad-\frac{1}{\varepsilon_{n}}\int_{\mathbf{R}}
K_{\varepsilon_{n}}(x_{n},y)\exp\big(\frac{\beta_{\varepsilon_{n}}(y)-\beta_{\varepsilon}(x_{n})+p(y-x_{n})}{\varepsilon_{n}}\big)dy
+\underline{{\lambda}_{1}}(p,N)-\mu\\
&\quad+\frac{f(\frac{x_{n}}{\varepsilon_{n}},v_{\varepsilon_{n}})}{v_{\varepsilon_{n}}}
-f_{s}^{\prime}(\frac{x_{n}}{\varepsilon_{n}},0)\\
&\geq\int_{B_{\frac{\delta}{2\varepsilon_{n}}}(0)}K(\frac{x_{n}}{\varepsilon_{n}},\frac{x_{n}}{\varepsilon_{n}}-y)
\exp\big(\frac{\phi(t_{n},x_{n}-\varepsilon_{n}y)-\phi(t_{n},x_{n})}{\varepsilon_{n}}
+\frac{\beta_{\varepsilon_{n}}(x_{n}-\varepsilon_{n}y)
-\beta_{\varepsilon_{n}}(x_{n})}{\varepsilon_{n}}\big)dy\\
&\quad-\int_{\mathbf{R}}K(\frac{x_{n}}{\varepsilon_{n}},\frac{x_{n}}{\varepsilon_{n}}-y)
\exp\big(\frac{\beta_{\varepsilon_{n}}(x_{n}-\varepsilon_{n}y)
-\beta_{\varepsilon_{n}}(x_{n})}{\varepsilon_{n}}-py\big)dy
+\underline{{\lambda}_{1}}(p,N)-\mu\\
&\quad+\frac{f(\frac{x_{n}}{\varepsilon_{n}},v_{\varepsilon_{n}})}{v_{\varepsilon_{n}}}
-f_{s}^{\prime}(\frac{x_{n}}{\varepsilon_{n}},0).
\end{split}
\end{equation}
Note also that
$\exp\big(\frac{\beta_{\varepsilon_{n}}(x_{n}-\varepsilon_{n}y)-\beta_{\varepsilon_{n}}(x_{n})}{\varepsilon_{n}}\big)
\leq\frac{\sup\limits_{x\in\mathbf{R}}\psi(x)}{\inf\limits_{x\in\mathbf{R}}\psi(x)}$,
and there exists some constant $M$ such that
$|\frac{\phi(t_{n},x_{n}-\varepsilon_{n}y)-\phi(t_{n},x_{n})}{\varepsilon_{n}}|\leq My$
for any $n\geq n_{0}, y\in B_{\frac{\delta}{2\varepsilon_{n}}}(0)$. Moreover,
since $\lim\limits_{n\to+\infty}z_{\varepsilon_{n}}(t_{n},x_{n})=z_{\ast}(t_{0},x_{0})<0$, we have
$v_{\varepsilon_{n}}(t_{n},x_{n})=\exp{\frac{z_{\varepsilon_{n}}(t_{n},x_{n})}{\varepsilon_{n}}}\to0$
as $n\to+\infty$. Then
$\frac{f(\frac{x_{n}}{\varepsilon_{n}},v_{\varepsilon_{n}})}{v_{\varepsilon_{n}}}
-f^{\prime}_{s}(\frac{x_{n}}{\varepsilon_{n}},0)\to0$
as $n\to+\infty$. For any $\sigma>0$, combining these and the uniformly integrability of $K(x,x-\cdot)e^{-p\cdot}$ with
\eqref{l4.5.4}, there exists $r$ such that
\begin{equation}\label{l4.5.5}
\begin{split}
\partial_{t}\phi(t_{n},x_{n})
&\geq-\frac{\sup\limits_{x\in\mathbf{R}}\psi(x)}{\inf\limits_{x\in\mathbf{R}}\psi(x)}\int_{B_{r}(0)}
K(\frac{x_{n}}{\varepsilon_{n}},\frac{x_{n}}{\varepsilon_{n}}-y)dy
\sup\limits_{B_{r}(0)}\big|\exp\big(\frac{\phi(t_{n},x_{n}-\varepsilon_{n}y)-\phi(t_{n},x_{n})}{\varepsilon_{n}}\big)-e^{-py}\big|\\
&\quad+2\sigma+\underline{{\lambda}_{1}}(p,N)-\mu+o(1).
\end{split}
\end{equation}
Now from the definition of $p$, it is no difficulty to find that
$\exp\big(\frac{\phi(t_{n},x_{n}-\varepsilon_{n}y)-\phi(t_{n},x_{n})}{\varepsilon_{n}}\big)-e^{-py}\to0$
locally uniform in y. Taking $n\to\infty$ in \eqref{l4.5.5}, we have
$$\partial_{t}\phi(t_{0},x_{0})\geq\underline{{\lambda}_{1}}(p,R)-\mu.$$
Then taking $\mu\to0^{+}$, we have
$$\partial_{t}\phi(t_{0},x_{0})-\underline{{\lambda}_{1}}(p,R)\geq0.$$
Finally, taking $R\to+\infty$, we obtain
$$\partial_{t}\phi(t_{0},x_{0})-\underline{H}(p)\geq0.$$
Thus complete the proof.
\end{proof}

Next we need consider the convex conjugate of $\underline{H}$ which is given by
$\underline{H}^{*}(q):=\sup\limits_{p\in\mathbf{R}}(pq-\underline{H}(p))\geq pq-\underline{H}(p)$
for any $p\in\mathbf{R}$. It is well defined by Proposition \ref{prop2.2}.
Then we have the following estimate for $z_{\ast}:$
\begin{lemma}\label{lem4.6}
One has $z_{\ast}(t,x)\geq\min\{-t\underline{H}^{*}(-\frac{x}{t}),0\}$ for all $(t,x)\in(0,+\infty)\times(0,+\infty).$
\end{lemma}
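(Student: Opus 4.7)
The plan is to use the Legendre duality
$$-t\underline{H}^{*}(-x/t) \;=\; \inf_{p\in\mathbb{R}}\bigl(px + t\underline{H}(p)\bigr)$$
together with a viscosity comparison. For each fixed $p\in\mathbb{R}$, the affine function $\phi_{p}(t,x):=px+t\underline{H}(p)$ is a classical solution of $\partial_{t}w-\underline{H}(\partial_{x}w)=0$. Since $z_{\ast}\leq 0$ by Theorem~\ref{thm4.1}, it suffices to prove that for every $p\in\mathbb{R}$,
$$z_{\ast}(t,x)\;\geq\;\min\bigl\{\phi_{p}(t,x),\,0\bigr\}\qquad\forall\,(t,x)\in(0,+\infty)\times(0,+\infty),$$
and then take the infimum over $p$ to obtain the lemma.

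Fix $p$ and argue by contradiction, assuming $z_{\ast}(t,x)<\min\{\phi_{p}(t,x),0\}$ at some interior point. Introduce the strict subsolution $\phi_{p}^{\eta}(t,x):=px+(\underline{H}(p)-\eta)t$, which satisfies $\partial_{t}\phi_{p}^{\eta}-\underline{H}(\partial_{x}\phi_{p}^{\eta})=-\eta<0$, and a smooth penalization $\Theta_{\sigma}$ that vanishes in the interior and blows up on the parabolic boundary $\{t=0\}\cup\{x=0\}$ (for instance $\Theta_{\sigma}(t,x)=\sigma/t+\sigma/x$). For $\eta,\sigma>0$ small, the lower semi-continuous function $z_{\ast}-\phi_{p}^{\eta}+\Theta_{\sigma}$ attains a strictly negative minimum at some interior point $(t_{1},x_{1})$; the smooth function $\phi_{p}^{\eta}-\Theta_{\sigma}$ then touches $z_{\ast}$ from below at $(t_{1},x_{1})$ with $z_{\ast}(t_{1},x_{1})<0$. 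Applying Lemma~\ref{lem4.5} yields
$$\partial_{t}\phi_{p}^{\eta}(t_{1},x_{1}) - \underline{H}\bigl(\partial_{x}\phi_{p}^{\eta}(t_{1},x_{1})\bigr)+O(\sigma)\geq 0,$$
that is $-\eta+O(\sigma)\geq 0$, a contradiction upon letting $\sigma\to 0^{+}$ with $\eta$ fixed. Sending $\eta\to 0^{+}$ afterwards and taking the infimum over $p$ closes the argument.

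The main obstacle is the boundary compatibility needed to confine the penalized minimum to the interior. On the face $\{t=0^{+}\}$ near $x=0$, the continuity and positivity $u_{0}(0)>0$ give $z_{\ast}(0^{+},x)=0$ in a neighborhood of $x=0$, matching or exceeding $\phi_{p}^{\eta}(0,x)=px$; away from this neighborhood the penalization $\sigma/t$ absorbs any deficit. On the face $\{x=0^{+}\}$, one shows $z_{\ast}(t,0)=0$: by Lemma~\ref{lem4.2}, $u(\tau,\cdot)>0$ for every $\tau>0$, and a comparison argument with the constant-initial-data solution of Corollary~\ref{cor4.1} (applied after $\alpha:=\inf_{|y|\leq M}u(\tau,y)>0$ on a bounded set) shows that $u(t/\varepsilon,0)\to 1$ as $\varepsilon\to 0^{+}$, whence $z_{\ast}(t,0)=0\geq\phi_{p}^{\eta}(t,0)$ once $\eta<\underline{H}(p)$. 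These boundary facts, together with the interior contradiction above, complete the proof; a secondary technicality is that $\underline{H}$ is only locally Lipschitz (Proposition~\ref{prop2.3}), but since our test functions are affine in $x$ the evaluation $\underline{H}(\partial_{x}\phi_{p}^{\eta})=\underline{H}(p)$ is pointwise unambiguous and the comparison is unaffected.
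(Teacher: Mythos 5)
The central reduction in your plan is not valid. You claim that for \emph{every} fixed $p$ one has $z_{\ast}(t,x)\geq\min\{px+t\underline{H}(p),0\}$, and that the lemma follows by varying $p$. This per-$p$ inequality is false in general: in the benchmark homogeneous case, where the lemma holds with equality, $z_{\ast}(t,x)=\min\{0,-t\underline{H}^{*}(-x/t)\}=\min\{0,\inf_{q}(qx+t\underline{H}(q))\}$, so at points with $x/t$ slightly above the spreading speed one has $z_{\ast}(t,x)<0$ while $px+t\underline{H}(p)>0$ for every $p\geq 0$ (and for $p<0$ other than the pointwise minimizer $p^{*}(x/t)$ the inequality also fails strictly). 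Since the minimizer $p^{*}(x/t)$ varies with the point, no comparison against a single affine function $\phi_{p}$ can produce the bound; the correct comparison object is the genuinely nonlinear function $\min\{0,-t\underline{H}^{*}(-x/t)\}$ itself (equivalently, test functions whose slope is the point-dependent $p^{*}$). The collapse is visible in the mechanics of your argument at the step ``attains a strictly negative minimum at some interior point $(t_{1},x_{1})$'': on the unbounded domain the infimum of $z_{\ast}-\phi^{\eta}_{p}+\Theta_{\sigma}$ escapes as $x\to+\infty$ (for $p>0$ because $-\phi^{\eta}_{p}\to-\infty$ linearly; for $p<0$ because $z_{\ast}$ decays superlinearly ahead of the front), and your penalization only guards $\{t=0\}\cup\{x=0\}$. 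Even when a minimum is attained, negativity of the penalized minimum value does not give $z_{\ast}(t_{1},x_{1})<0$, which is what Lemma \ref{lem4.5} requires (when the obstacle $z_{\ast}=0$ is active the variational inequality carries no PDE information); and the contribution $\sigma/t_{1}^{2}$ of $\Theta_{\sigma}$ enters the supersolution inequality with the favorable sign for $z_\ast$, so your ``$-\eta+O(\sigma)\geq 0$'' is only legitimate after ruling out $t_{1}\to 0$ as $\sigma\to 0$, which you have not done.

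The boundary facts you invoke are also not right as stated. For fixed $x>0$, letting $t\to 0^{+}$ is the far-field regime $x/t\to+\infty$, so $z_{\ast}(0^{+},x)=0$ ``in a neighborhood of $x=0$'' is false (only the bound of Theorem \ref{thm4.1}, which degenerates as $t\to 0^{+}$, is available). And your argument for $z_{\ast}(t,0)=0$ compares $u$ with the solution of Corollary \ref{cor4.1}, which has initial datum $\alpha$ on all of $\mathbf{R}$; since $u(\tau,\cdot)$ is positive but not bounded below by a positive constant on $\mathbf{R}$, this comparison needs the localization of Lemma \ref{lem4.7} (as in Step 3 of the proof of Theorem \ref{thm2.1}), not a direct application. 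For reference, the paper does not reprove this lemma but cites \cite[Lemma 4.4]{B2} and \cite[Lemma 4.5]{LZh}, where the supersolution property of Lemma \ref{lem4.5} is played against the full function $\min\{0,-t\underline{H}^{*}(-x/t)\}$ with point-dependent slopes; that is exactly the ingredient your fixed-$p$ scheme is missing, so the proposal has a genuine gap rather than a repairable technicality.
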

\begin{proof}
See\cite[Lemma 4.4]{B2} or \cite[Lemma 4.5]{LZh}.
\end{proof}

\subsection{Complete the proof of Theorem \ref{thm2.1}}

For any positive number $a,b$, let
$$S_{t}(a,b):=\{x\in\mathbf{R}|-at<x<bt\}.$$
We first prove a lemma we will need later:
\begin{lemma}\label{lem4.7}
Let $\tau>0$, and assume that $0\leq v, \tilde{v}\leq 1$ satisfy
\begin{equation}\label{l4.7.1}
u_{t}(t,x)=\int_{\mathbf{R}}K(x,y)u(t,y)dy-b(x)u(t,x)+f(x,u)\ \ t>0,x\in\mathbf{R},
\end{equation}
and $v(0,x)=v_{0}(x), \tilde{v}(0,x)=\tilde{v}_{0}(x)$ with $v_{0}(x)=\tilde{v}_{0}(x)$
for $x\in S_{\tau}(a,b)$. Then
$$\big|v(t,z)-\tilde{v}(t,z)\big|\leq ce^{ct}e^{-\theta\tau},\ \forall t\geq0,\ z\in S_{\tau}(a-\theta,b-\theta),$$
where $0<\theta<\min\{a,b\}$ and $c$ is a constant depending on $K, f$. In particular, fixing $T_{1}>0$,
for any $\sigma>0$, one can find $T$ such that
$$\big|v(T_{1},z)-\tilde{v}(T_{1},z)\big|\leq\sigma,\ \forall\tau>T,\ z\in S_{\tau}(a-\theta,b-\theta).$$
\end{lemma}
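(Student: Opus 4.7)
The plan is to control the difference $w := v - \tilde v$ by a two-sided exponential barrier adapted to the strip $S_\tau(a,b)$, and then to apply the comparison principle (Remark \ref{re4.1}) to propagate the initial coincidence of $v$ and $\tilde v$ from $S_\tau(a,b)$ into its $\theta\tau$-interior. Writing $f(x,v) - f(x,\tilde v) = \hat c(t,x)\,w$ with $\hat c(t,x) := \int_0^1 f_s'(x,\tilde v(t,x) + s\,w(t,x))\,ds \in L^\infty$, the difference $w$ satisfies
\begin{equation*}
w_t(t,x) = \int_{\mathbf{R}} K(x,y)\,w(t,y)\,dy - b(x)\,w(t,x) + \hat c(t,x)\,w(t,x),
\end{equation*}
with $w(0,x) = 0$ on $S_\tau(a,b)$ and $|w(0,x)| \leq 1$ everywhere.

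I would then introduce the fixed barrier $\Phi(x) := e^{x - b\tau} + e^{-(x+a\tau)}$, which satisfies $\Phi(x) \geq 1$ on $\mathbf{R} \setminus S_\tau(a,b)$ and hence $\Phi \geq |w(0,\cdot)|$ on all of $\mathbf{R}$. By (K1) with $p = \pm 1$, the integrals $\int_{\mathbf{R}} K(x,x-\xi)\,e^{\mp\xi}\,d\xi$ lie in $L^\infty(\mathbf{R})$, and combined with $b, \hat c \in L^\infty$ this yields a constant $C = C(K,f)$, independent of $\tau, \theta, a, b$, such that $K\Phi - b\,\Phi + \hat c\,\Phi \leq C\,\Phi$ pointwise. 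Then $\Psi(t,x) := e^{Ct}\,\Phi(x)$ is a supersolution of the equation for $w$ with $\Psi(0,\cdot) \geq |w(0,\cdot)|$, and applying the argument of Lemma \ref{lem4.1}/Remark \ref{re4.1} separately to $\Psi - w$ and $\Psi + w$---both bounded below by $-1$, since $|w| \leq 1$ and $\Psi \geq 0$---gives $|w(t,x)| \leq e^{Ct}\,\Phi(x)$ for every $(t,x)$.

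Evaluating at $z \in S_\tau(a-\theta, b-\theta)$, the strict inequalities $e^{z-b\tau} < e^{-\theta\tau}$ and $e^{-(z+a\tau)} < e^{-\theta\tau}$ give $\Phi(z) < 2\,e^{-\theta\tau}$, hence $|w(t,z)| \leq 2\,e^{Ct}\,e^{-\theta\tau}$, which is the claimed estimate with $c := \max\{2, C\}$; the ``in particular'' clause follows at once by choosing $T$ so large that $2\,e^{CT_1}\,e^{-\theta T} \leq \sigma$. The main technical point is verifying the supersolution inequality uniformly in $x$, which is exactly what the uniform bound from (K1) supplies via $K(e^{\pm\cdot})(x) = e^{\pm x}\int K(x,x-\xi)\,e^{\mp\xi}\,d\xi$; pinning the exponents in $\Phi$ to the \emph{constant} value $\pm 1$ (rather than tying them to $\pm\theta$) is what guarantees that $C$ depends only on $K$ and $f$, not on $\theta$ or $\tau$. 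One minor subtlety is that $\Psi$ is unbounded in $x$, so $\Psi \pm w$ fails the literal upper-boundedness hypothesis of Lemma \ref{lem4.1}; however, the proof of that lemma only uses $m := \inf z > -\infty$, so the comparison argument applies without modification.
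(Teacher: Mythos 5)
Your proof is correct, but it takes a genuinely different route from the paper's. The paper works in the weighted spaces $X_{z}$: it uses the semigroup bound $\|e^{Kt}\phi\|_{z}\le M_{0}e^{c_{0}t}\|\phi\|_{z}$ (this is where (K1) enters), writes $w=v-\tilde v$ through the variation-of-constants formula, and closes with Gronwall's inequality; the factor $e^{-\theta\tau}$ then comes from evaluating $\|w(0,\cdot)\|_{z}$ for $z\in S_{\tau}(a-\theta,b-\theta)$, since $w(0,\cdot)$ vanishes on $S_{\tau}(a,b)$ and the weight $e^{-|x-z|}$ is at most $e^{-\theta\tau}$ off that strip. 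You instead build the explicit two-sided exponential barrier $\Phi(x)=e^{x-b\tau}+e^{-(x+a\tau)}$ anchored at the endpoints of the strip, verify via (K1) with $p=\pm1$ that $e^{Ct}\Phi$ is a supersolution of the linear equation for $w$ with $C=C(K,f)$ independent of $\tau,\theta,a,b$, and conclude by the comparison principle of Lemma \ref{lem4.1}/Remark \ref{re4.1} applied to $e^{Ct}\Phi\pm w$. Both arguments exploit exactly the same structural facts (unit-rate exponential moments of $K$ from (K1), boundedness of $b$ and of $f_{s}^{\prime}$ on $[0,1]$) and give the same form of bound; yours is more elementary, avoiding the mild-solution/Gronwall machinery, and one fixed barrier serves all $z$ simultaneously, while the paper's norm estimate is uniform in $z$ by construction and sidesteps the one extra verification your route requires, namely that the comparison lemma applies to functions unbounded above. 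You flag that point correctly, and it is indeed harmless: the proof of Lemma \ref{lem4.1} uses only $\inf z>-\infty$, boundedness of the coefficients, and finiteness of $\int_{\mathbf{R}}K(x,y)z(t,y)\,dy$, which (K1) guarantees for functions growing like $e^{|x|}$.
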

\begin{proof}
First by the semigroup theory, there exists $c_{0}\in\mathbf{R}$ such that
\begin{equation}\label{l4.7.2}
\|e^{Kt}\phi\|_{z}\leq M_{0}e^{c_{0}t}\|\phi\|_{z},\ \forall t\geq0, z\in\mathbf{R}, \phi\in X_{z}.
\end{equation}
Let $w=v-\tilde{v}$. Then $w$ satisfies
\begin{equation*}
  \left\{
   \begin{aligned}
   w_{t}(t,x)=Kw(t,x)+(\overline{c}(t,x)-b(x))w(t,x)\ \ &t>0,x\in\mathbf{R},  \\
   w(0,x)=v_{0}(x)-\tilde{v}_{0}(x),\ \  &x\in\mathbf{R},\\
   \end{aligned}
   \right.
  \end{equation*}
where $\overline{c}(t,x)=\int_{0}^{1}f_{s}^{\prime}(x,\tilde{v}(t,x)+s(v(t,x)-\tilde{v}(t,x)))ds$.
Hence
\begin{equation}\label{l4.7.3}
 w(t,x)=M_{0}e^{Kt}w(0,x)+M_{0}\int_{0}^{t}e^{K(t-s)}(\overline{c}(s,x)-b(x))w(s,x)ds,
\end{equation}
and $|\overline{c}(t,x)-b(x)|\leq M\ \forall (t,x)\in(0,+\infty)\times\mathbf{R}$ for some $M$.
Using \eqref{l4.7.2}, we have
$$\|w(t,\cdot)\|_{z}\leq M_{0}e^{c_{0}t}\|w(0,\cdot)\|_{z}+M_{0}M\int_{0}^{t}e^{c_{0}(t-s)}\|w(s,\cdot)\|_{z}ds.$$
The Gronwall's inequality yields that
$$\|w(t,\cdot)\|_{z}\leq M_{0}e^{(c_{0}+M_{0}M)t}\|w(0,\cdot)\|_{z}\ \forall t\geq0, z\in\mathbf{R},$$
i.e., $\sup\limits_{x\in\mathbf{R}}\big(e^{-(x-z)}|v(t,x)-\tilde{v}(t,x)|\big)\leq
M_{0}e^{(c_{0}+M_{0}M)t}\sup\limits_{x\in\mathbf{R}}\big(e^{-(x-z)}|v(0,x)-\tilde{v}(0,x)|\big)\ \forall t\geq0, z\in\mathbf{R}.$
In particular, for any $z\in\ S_{\tau}(a-\theta,b-\theta)$, we have
\begin{equation}\label{l4.7.4}
\begin{split}
|v(t,z)-\tilde{v}(t,z)|
&\leq M_{0}e^{(c_{0}+M_{0}M)t}\sup\limits_{x\in\mathbf{R}}\big(e^{-(x-z)}|v(0,x)-\tilde{v}(0,x)|\big)\\
&=M_{0}e^{(c_{0}+M_{0}M)t}\sup\limits_{x\in\mathbf{R}\setminus S_{\tau}(a,b)}\big(e^{-(x-z)}|v(0,x)-\tilde{v}(0,x)|\big)\\
&\leq2M_{0}e^{(c_{0}+M_{0}M)t}\sup\limits_{x\in\mathbf{R}\setminus S_{\tau}(a,b)}e^{-(x-z)}\\
&\leq2M_{0}e^{(c_{0}+M_{0}M)t}e^{-\theta\tau}.
\end{split}
\end{equation}
\end{proof}
\begin{proof}[Proof of part 2 of Theorem \ref{thm2.1}] We will prove it in three steps.\\
Step 1: For any $\omega\in(0,\underline{\omega}), \omega^{-}\in(0,\underline{\omega}^{-})$, we have
$(1,\omega)$ and $(1,\omega^{-})\in \text{int}\{z_{\ast}=0\}$.

By the definition of $\underline{\omega},\ \exists\ \varepsilon>0$ such that
$\underline{H}(-p)\geq p\omega(1+\varepsilon)$ for any $p>0$; also from Proposition \ref{prop2.2} one can find
that there exists $0<\eta\leq\underline{H}(0)$ such that
$\underline{H}(-p)\geq p\omega+\eta$,
i.e., $-\eta\geq(-p)(-\omega)-\underline{H}(-p)$ for all
$p\in\mathbf{R}$. Then we obtain
$-\underline{H}^{*}(-\omega)\geq\eta>0$. Hence by the continuity of $\underline{H}^{*}$ and
Lemma \ref{lem4.6}, there exists a neighbourhood $B(1,\omega)$ of $(1,\omega)\in(0,+\infty)\times(0,+\infty)$ such that for any
$(t,x)\in B(1,\omega)$. Then we have
$$z_{\ast}(t,x)\geq \min\{-t\underline{H}^{*}(-\frac{x}{t}),0\}=0,$$
that is to say, $(1,\omega)\in \text{int}\{z_{\ast}=0\}$. Similarly, $(1,\omega^{-})\in \text{int}\{z_{\ast}=0\}$.

Step 2: Show that $\liminf\limits_{t\to+\infty}\{\inf\limits_{x\in S_{t}(\omega^{-},\omega)}u(t,x)\}>0.$

Note that $(1,\omega)$ and $(1,\omega^{-})\in \text{int}\{z_{\ast}=0\}$. Then by Lemma \ref{lem4.4}, there exists
$\theta>0$ such that
\begin{equation}\label{step2.1}
\liminf\limits_{t\to+\infty}\{\inf\limits_{\tilde{\omega}\in\Omega} u(t,\tilde{\omega}t)\}>0,
\end{equation}
where $\Omega:=(-\omega^{-},-(\omega^{-}-\theta))\cup(\omega-\theta,\omega)$.
Suppose that $\liminf\limits_{t\to+\infty}\{\inf\limits_{x\in S_{t}(\omega^{-},\omega)}u(t,x)\}=0$.
Then there exists $(t_{n},x_{n})$ satisfying $\lim\limits_{n\to\infty}u(t_{n},x_{n})=0$, with
$x_{n}\in S_{t_{n}}(-(\omega^{-}-\theta),\omega-\theta)$ since \eqref{step2.1}
(hence $B_{\theta}(x_{n})\subset S_{t_{n}}(\omega^{-},\omega)$) , and
$u(t_{n},x_{n})=\inf\limits_{t\in[0,t_{n}], x\in S_{t_{n}}(\omega^{-},\omega)}u(t,x)$. Therefore, at
$(t_{n},x_{n})$, we have
\begin{equation}\label{step2.2}
\begin{split}
0\geq\partial_{t}u(t_{n},x_{n})
&=\int_{\mathbf{R}}K(x_{n},y)u(t_{n},y)dy-b(x_{n})u(t_{n},x_{n})+f(x_{n},u(t_{n},x_{n}))\\
&\geq\int_{B_{\theta t_{n}}(x_{n})}K(x_{n},y)u(t_{n},y)dy-b(x_{n})u(t_{n},x_{n})+f(x_{n},u(t_{n},x_{n}))\\
&\geq\int_{B_{\theta t_{n}}(x_{n})}K(x_{n},y)u(t_{n},x_{n})dy-b(x_{n})u(t_{n},x_{n})+f(x_{n},u(t_{n},x_{n}))\\
&=\int_{B_{\theta t_{n}}^{C}(x_{n})}K(x_{n},y)u(t_{n},x_{n})dy+f(x_{n},u(t_{n},x_{n})).
\end{split}
\end{equation}
Combining this with \eqref{l4.4.7}
$$0\geq\limsup\limits_{n\to\infty}\Big(
\int_{B_{\theta t_{n}}^{C}(x_{n})}K(x_{n},y)dy+\frac{f(x_{n},u(t_{n},x_{n}))}{u(t_{n},x_{n})}\Big)\geq
\limsup\limits_{n\to\infty}f^{\prime}_{s}(x_{n},0)\geq\inf\limits_{x\in\mathbf{R}}f^{\prime}_{s}(x,0)
>0,$$
which is a contradiction.
Hence $\liminf\limits_{t\to+\infty}\{\inf\limits_{x\in S_{t}(\omega^{-},\omega)}u(t,x)\}>0.$\\
Step 3: End the proof.

We only need to show that
$\liminf\limits_{t\to+\infty}\{\inf\limits_{x\in S_{t}(\omega^{-}-2\theta,\omega-2\theta)}u(t,x)\}=1$
for any $\theta>0$ since $\omega\in(0,\underline{\omega}), \omega^{-}\in(0,\underline{\omega}^{-})$ are arbitrary.

Let $v$ be a solution of \eqref{c4.1.1} with initial value $v_{0}=\alpha$, where
$\alpha:=\frac{1}{2}\liminf\limits_{t\to+\infty}\{\inf\limits_{x\in S_{t}(\omega^{-},\omega)}u(t,x)\}\in(0,1).$
Then for any $\sigma>0$, by Corollary \ref{cor4.1}, there exists $T_{1}$ such that
$v(t,x)\geq1-\sigma\ \forall t\geq T_{1}, x\in\mathbf{R}$.
Let $\underline{u}^{\tau}(t,x)$ be a solution of \eqref{l4.7.1} with initial value $\underline{u}^{\tau}(0,x)=\min\limits_{x\in\mathbf{R}}\{\alpha, u(\tau,x)\}$.
Then from Step 2 one can easily find that $\underline{u}^{\tau}(0,x)=\alpha$ for
$x\in S_{\tau}(\omega^{-},\omega)$ whenever $\tau$ is large, say $\tau>T_{2}$ for some $T_{2}$.
Using Lemma \ref{lem4.7} by replacing $\tilde{v}, a, b$ with $\underline{u}^{\tau}, \omega^{-}, \omega$ respectively, one can find a constant $T_{3}(\geq T_{2})$ such that
$\big|v(T_{1},z)-\underline{u}^{\tau}(T_{1},z)\big|\leq\sigma,\ \forall\tau>T,\ z\in S_{\tau}(\omega^{-}-\theta,\omega-\theta).$ Therefore, $\underline{u}^{\tau}(T_{1},z)\geq v(T_{1},z)-\sigma\geq1-2\sigma,\ \forall\tau>T,\ z\in S_{\tau}(\omega^{-}-\theta,\omega-\theta).$
Moreover, there exists $T_{4}>0$ such that $S_{\tau}(\omega^{-}-2\theta,\omega-2\theta)\subset S_{\tau-T_{1}}(\omega^{-}-\theta,\omega-\theta)$ for $\tau\geq T_{4}$.
Now taking $T_{0}=T_{1}+T_{3}+T_{4}$, we have
\begin{equation}\label{step3.1}
\underline{u}^{t-T_{1}}(T_{1},z)\geq 1-2\sigma,\ \forall t>T_{0},\ z\in S_{t}(\omega^{-}-2\theta,\omega-2\theta)\subset S_{t-T_{1}}(\omega^{-}-\theta,\omega-\theta).
\end{equation}
On the other hand, Lemma \ref{lem4.1} yields that
$$u(s+t,z)\geq \underline{u}^{t}(s,z)\ \forall t\geq0, s\geq0, z\in\mathbf{R}.$$
Thus for $t\geq T_{0}$
$$u(t,z)\geq \underline{u}^{t-T_{1}}(T_{1},z)\geq 1-2\sigma\ \forall z\in S_{t}(\omega^{-}-2\theta,\omega-2\theta).$$
Therefore, $\liminf\limits_{t\to+\infty}\{\inf\limits_{x\in S_{t}(\omega^{-}-2\theta,\omega-2\theta)}u(t,x)\}=1$.
\end{proof}

\subsection{Examples}
In this subsection, we will give two examples to show that there are many kernels satisfying (K3) and (K5).
\begin{example}\label{eg4.1}
Assume that $K(x,y)\geq\theta_{0}$, when $|x-y|\leq\Delta$. Then (K3) and (K5) hold. Moreover,
$\displaystyle{\lim_{p\rightarrow+\infty}}\frac{\underline{H}(\pm p)}{p}=+\infty.$
\end{example}
\begin{proof}
Let $\eta_{0}=\frac{\Delta}{4}, \delta_{0}=\frac{\Delta}{2}$. Then for any $\eta\geq\eta_{0}$, we have
\begin{equation*}
\begin{split}
\int_{\mathbf{R}}K(x,y)\chi_{\eta+\delta_{0}}(y)\phi(y)dy
&=\int_{B_{\eta+\delta_{0}}(x)}K(x,y)\phi(y)dy\\
&\geq\int_{B_{\eta+\delta_{0}}(x)\cap\{|y|\leq\eta:|x-y|\leq\Delta, |x|\leq\eta+\delta_{0}\}}K(x,y)\phi(y)dy\\
&\geq \theta_{0}\min\limits_{|y|\leq\eta}\phi(y)|B_{\eta+\delta_{0}}(x)\cap\{|y|\leq\eta:|x-y|\leq\Delta, |x|\leq\eta+\delta_{0}\}|\\
&\geq\frac{\Delta}{2}\theta_{0}\min\limits_{|y|\leq\eta}\phi(y).
\end{split}
\end{equation*}
Hence (K3) holds.

Note also that $\Delta$ can be small enough to make sure that $u_{0}(x)>0\ \forall x\in B_{\eta_{0}}(0)$.
For any $t>0, x\in\mathbf{R},$ one can find $n$ large enough such that $|x|<\eta_{0}+n\delta_{0}.$
Then one can easily verify that $K^{n}u_{0}\geq C^{n}\min\limits_{|y|\leq\eta_{0}}u_{0}(y)>0$.
Hence (K5) holds. Moreover, taking $\phi=1$ as a test function, we have
\begin{equation*}
\begin{split}
\displaystyle{\lim_{p\rightarrow +\infty}}\frac{\underline{H}(\pm p)}{p}
&\geq\displaystyle{\lim_{p\rightarrow +\infty}}
\frac{\displaystyle{\lim_{R\rightarrow +\infty}}\inf\limits_{x\in I_{R}}
\{\int_{\mathbf{R}}K(x,y)e^{\pm p(y-x)}dy-a(x)\}}{p}\\
&\geq\displaystyle{\lim_{p\rightarrow +\infty}}
\frac{\displaystyle{\liminf_{R\rightarrow +\infty}}\inf\limits_{x\in I_{R}}
\{\int_{B_{\delta_{0}}(0)}K(x,x-\xi)e^{\mp p\xi}dy-a(x)\}}{p}\\
&\geq\displaystyle{\lim_{p\rightarrow +\infty}}
\frac{\theta_{0}\int_{B_{\delta_{0}}(0)}e^{\mp p\xi}dy-\sup_{x\in\mathbf{R}}a(x)}{p}\\
&\geq\displaystyle{\lim_{p\rightarrow +\infty}}
\frac{\theta_{0}(e^{|p|\delta_{0}}-1)}{p^{2}}=+\infty.
\end{split}
\end{equation*}
\end{proof}

\begin{rem}\label{re4.3}
An observation is that: assume that $a(\cdot)\in C(\mathbf{R})$ is periodic with period $L$ and
$\sum\limits_{l=-\infty}^{\infty}K(x,y+lL)=\sum\limits_{l=-\infty}^{\infty}K(x+L,y+L+lL)$. It follows
form \cite{C10} or \cite{LCW} that there exists an eigenpair $(\lambda_{per},\phi_{per})$ such that
$L_{p}\phi_{per}=\lambda_{per}\phi_{per}$.
Hence $\overline{\lambda_{1}}(p,-\infty)=\underline{{\lambda}_{1}}(p,-\infty)=\lambda_{per}$ by
Corollary \ref{cor2.1},
which yields that $\underline{\omega}=\overline{\omega}$ is exactly the spreading speed provided
(K1), (K2) and (K4) hold.
\end{rem}

\begin{example}\label{eg4.2}
Let $K(x,y)=\sum\limits_{n=1}^{\infty}a_{n}\delta_{q_{n}}(x-y)$,
where $a_{n}$ is a positive sequence, $q_{n}\in\mathbf{R}$
and $\delta$ is the Dirac's delta function on $\mathbf{R}$.
Assume that there exists $q_{n_{1}}>0$ and $q_{n_{2}}<0$
such that $q_{n_{1}}<a_{0}+b_{0}$ and $-q_{n_{2}}<a_{0}+b_{0}$, where $a_{0}=\sup\{a:u_{0}(x)>0\ \forall x\in(-a,0]\}$, $b_{0}=\sup\{b:u_{0}(x)>0\ \forall x\in[0,b)\}$.
Then (K3) and (K5) hold. Moreover, $\displaystyle{\lim_{p\rightarrow+\infty}}\frac{\underline{H}(\pm p)}{p}=+\infty.$
\end{example}
\begin{proof}
Let $\delta_{0}=\min\{q_{n_{1}}, -q_{n_{2}}\}$, $\eta_{0}=\max\{q_{n_{1}}, -q_{n_{2}}\}$.
For any $\eta\geq\eta_{0}$, we consider the following two cases:\\
Case 1: $x\in[0,\eta+\delta_{0}].$ Then
$$K[\chi_{\eta+\delta_{0}}\phi](x)\geq a_{n_{1}}\chi_{\eta+\delta_{0}}(x-q_{n_{1}})\phi(x-q_{n_{1}})
=a_{n_{1}}\phi(x-q_{n_{1}})\geq a_{n_{1}}\min\limits_{|y|\leq\eta}\phi(y)$$
since $-\eta\leq-q_{n_{1}}\leq x-q_{n_{1}}\leq\eta+\delta_{0}-q_{n_{1}}\leq\eta.$\\
Case 2: $x\in[-\eta-\delta_{0},0].$ Then
$$K[\chi_{\eta+\delta_{0}}\phi](x)\geq a_{n_{2}}\chi_{\eta+\delta_{0}}(x-q_{n_{2}})\phi(x-q_{n_{2}})
=a_{n_{2}}\phi(x-q_{n_{2}})\geq a_{n_{2}}\min\limits_{|y|\leq\eta}\phi(y)$$
since $\eta\geq-q_{n_{2}}\geq x-q_{n_{2}}\geq-\eta-\delta_{0}-q_{n_{2}}\geq-\eta.$

Therefore, $K[\chi_{\eta+\delta_{0}}\phi](x)\geq \min\{a_{n_{1}},a_{n_{2}}\}\min\limits_{|y|\leq\eta}\phi(y)$
for any $|x|\leq\eta+\delta_{0}$. Hence (K3) holds. One can easily verify that $Ku_{0}(x)>0\ \forall x\in(q_{n_{2}}-a_{0},q_{n_{1}}+b_{0})$. Then we have
$K^{n}u_{0}(x)>0\ \forall x\in(nq_{n_{2}}-a_{0},nq_{n_{1}}+b_{0})$ by induction. Therefore, (K5) holds.
Moreover, taking $\phi=1$ as a test function, we have
\begin{equation*}
\begin{split}
\displaystyle{\lim_{p\rightarrow +\infty}}\frac{\underline{H}(\pm p)}{p}
&\geq\displaystyle{\lim_{p\rightarrow +\infty}}
\frac{\displaystyle{\lim_{R\rightarrow +\infty}}\inf\limits_{x\in I_{R}}
\{\int_{\mathbf{R}}K(x,y)e^{\pm p(y-x)}dy-a(x)\}}{p}\\
&\geq\displaystyle{\lim_{p\rightarrow +\infty}}
\frac{\displaystyle{\liminf_{R\rightarrow +\infty}}\inf\limits_{x\in I_{R}}
\{\sum_{n=1}^{\infty}a_{n}e^{\mp pq_{n}}-a(x)\}}{p}\\
&\geq\displaystyle{\lim_{p\rightarrow +\infty}}
\frac{a_{n_{1}}e^{\mp pq_{n_{1}}}+a_{n_{2}}e^{\mp pq_{n_{2}}}-\sup_{x\in\mathbf{R}}a(x)}{p}\\
&=+\infty.
\end{split}
\end{equation*}
\end{proof}

\section{Almost periodic coefficients and periodic coefficients}

\subsection{An auxiliary nonlinear equation}
Before going any further, we consider the existence and uniqueness of the bounded solution of the following equation first.
\begin{equation}\label{5.1}
\varepsilon u^{\varepsilon}(x)-\int_{\mathbf{R}}K_{p}(x,y)e^{u^{\varepsilon}(y)-u^{\varepsilon}(x)}dy+a(x)=0,
\end{equation}
where $\varepsilon$ is a parameter and $K_{p}(x,y)=K(x,y)e^{p(y-x)}$. We give the following assumption:\\
(K6) For any fixed $R>0$, $\int_{B_{R}(0)}|K_{p}(x,x-s)-K_{p}(x+\xi,x+\xi-s)|ds\to0$ as $|\xi|\to0$
uniformly w.r.t $x\in\mathbf{R}$.
\begin{thm}\label{thm5.1}
Fix $\varepsilon>0$. Assume that (K1), (K2) and (K6) hold, $a(x)$ is uniformly continuous and $w, v$ are bounded, satisfying
$$\left\{
   \begin{aligned}
   \varepsilon w(x)-\int_{\mathbf{R}}K_{p}(x,y)e^{w(y)-w(x)}dy+a(x)\leq0,\ x\in\mathbf{R},\\
   \varepsilon v(x)-\int_{\mathbf{R}}K_{p}(x,y)e^{v(y)-v(x)}dy+a(x)\geq0,\ x\in\mathbf{R}.
   \end{aligned}
   \right.$$
Then $w(x)\leq v(x)$ on $\mathbf{R}$.
\end{thm}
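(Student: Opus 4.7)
My plan is to establish the comparison $w \leq v$ by a nonlocal maximum principle argument that hinges on a simple algebraic factorization. Setting $g := w - v$ and $M := \sup_{\mathbf{R}} g$ (which is finite since $w,v$ are bounded), I would argue by contradiction: assume $M > 0$ and pick a maximizing sequence $\{x_n\}$ with $g(x_n) \to M$.

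The first step is to subtract the subsolution inequality for $w$ from the supersolution inequality for $v$, obtaining pointwise in $x$
\begin{equation*}
\varepsilon\, g(x) \leq \int_{\mathbf{R}} K_p(x,y)\bigl(e^{w(y)-w(x)} - e^{v(y)-v(x)}\bigr)\,dy.
\end{equation*}
The key algebraic move is to rewrite the bracketed difference as
\begin{equation*}
e^{w(y)-w(x)} - e^{v(y)-v(x)} = e^{v(y)-v(x)}\bigl(e^{g(y)-g(x)} - 1\bigr),
\end{equation*}
which separates the $v$-dependence from a factor seeing only the difference $g$. Second, at $x = x_n$, the uniform pointwise bound $g(y) - g(x_n) \leq M - g(x_n)$ (valid since $g \leq M$) gives $e^{g(y) - g(x_n)} - 1 \leq e^{M - g(x_n)} - 1$ uniformly in $y$. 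Combined with $e^{v(y) - v(x_n)} \leq e^{2\|v\|_\infty}$ and the bound $C_0 := \sup_x \int_{\mathbf{R}} K_p(x,y)\,dy < \infty$ coming from (K1), this yields
\begin{equation*}
\varepsilon\, g(x_n) \leq e^{2\|v\|_\infty} C_0 \bigl(e^{M - g(x_n)} - 1\bigr).
\end{equation*}
Letting $n \to \infty$ drives the left side to $\varepsilon M > 0$ and the right side to $0$, the desired contradiction; hence $M \leq 0$ and $w \leq v$ on $\mathbf{R}$.

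The main ``obstacle,'' such as it is, lies in spotting the factorization in the first place; once seen, it reduces the nonlocal problem to the classical trick of uniformly bounding the integrand at an approximate supremum, bypassing the usual difficulty that the supremum may not be attained. I note that assumptions (K2) and (K6), as well as the uniform continuity of $a$, do not seem to enter the comparison argument I have sketched; they are presumably reserved for the existence half of the auxiliary theory surrounding \eqref{5.1} (for instance, a Perron-type construction or a fixed-point scheme requiring continuity of the kernel in $x$).
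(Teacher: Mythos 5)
Your proof is correct, and it takes a genuinely different---and simpler---route than the paper's. The paper runs a Crandall--Ishii--Lions style doubling-of-variables argument: it penalizes with $\Phi(x,y)=w(x)-v(y)-\alpha|x-y|-\mu(|x|+|y|)$, evaluates the subsolution inequality at $\xi$ and the supersolution inequality at $\eta$ for an (approximate) maximum point $(\xi,\eta)$ of $\Phi$, and then has to compare the kernels based at $\xi$ and at $\eta$ and the values $a(\xi)$, $a(\eta)$; this is precisely where (K2), (K6) and the uniform continuity of $a$ enter, and it also tacitly uses enough semicontinuity of $w,v$ for the penalized maximum to be attained. You instead subtract the two inequalities at the same point, which is legitimate here because the operator is a zeroth-order nonlocal operator defined classically and pointwise on bounded (measurable) functions, so no doubling of variables is needed; the factorization $e^{w(y)-w(x)}-e^{v(y)-v(x)}=e^{v(y)-v(x)}\bigl(e^{g(y)-g(x)}-1\bigr)$ together with $g\leq M$ bounds the integrand from above by $K_p(x_n,y)e^{v(y)-v(x_n)}\bigl(e^{M-g(x_n)}-1\bigr)$, hence the right-hand side by $\bigl(e^{M-g(x_n)}-1\bigr)e^{2\|v\|_{\infty}}C_0$ with $C_0=\sup_x\int_{\mathbf{R}}K_p(x,y)\,dy<\infty$ by (K1), and the strictly positive discount $\varepsilon$ forces $\varepsilon M\leq 0$ in the limit, the desired contradiction. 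What each approach buys: yours needs only (K1) and boundedness, and the maximizing-sequence device even dispenses with attainment of the supremum (so no continuity of $w,v$ is required), which confirms your observation that (K2), (K6) and the uniform continuity of $a$ are not needed for the comparison half---in the paper these hypotheses are exercised only through the doubling mechanism and in the surrounding existence/regularity theory; the paper's heavier machinery would become genuinely necessary only if the sub/supersolution inequalities were interpreted in a weaker, test-function (viscosity) sense rather than pointwise, which is not the case in Theorem \ref{thm5.1} as stated.
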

\begin{proof}
Set $$\Phi(x,y)=w(x)-v(y)-\alpha|x-y|-\mu(|x|+|y|)$$
for $\alpha>0,\ \mu>0$. Then $\Phi$ reaches its maximum at some point, say $(\xi,\eta)$,
over $\mathbf{R}^{2}$. Obviously, $(\xi,\eta)$ depends on $\alpha,\ \mu$.
If $w(x)\leq v(x)$ is not true, then there must exist $x_{0}\in\mathbf{R},\ \delta>0$ such that
$w(x_{0})-v(x_{0})\geq2\delta$. Now for sufficiently small $\mu$,
we have $\Phi(x_{0},x_{0})=w(x_{0})-v(x_{0})-2\mu |x_{0}|\geq\delta$, hence
$0<\delta\leq\Phi(\xi,\eta)\leq\sup\limits_{x}|w(x)|+\sup\limits_{x}|v(x)|$. From this we obtain
$$\delta+\alpha|\xi-\eta|+\mu(|\xi|+|\eta|)\leq w(\xi)-v(\eta)\leq\sup\limits_{x}|w(x)|+\sup\limits_{x}|v(x)|,$$
which yields $|\xi-\eta|\to0$ as $\alpha\to+\infty$ uniformly with respect to $\mu$. Furthermore,
\begin{equation}\label{t5.1.1}
\begin{split}
\varepsilon\delta
&\leq\varepsilon w(\xi)-\varepsilon v(\eta)\\
&\leq \int_{\mathbf{R}}K_{p}(\xi,y)e^{w(y)-w(\xi)}dy-a(\xi)
-\int_{\mathbf{R}}K_{p}(\eta,y)e^{v(y)-v(\eta)}dy+a(\eta)\\
&=\int_{\mathbf{R}}K_{p}(\xi,\xi-s)e^{w(\xi-s)-w(\xi)}ds-\int_{\mathbf{R}}K_{p}(\eta,\eta-s)e^{v(\eta-s)-v(\eta)}ds
+a(\eta)-a(\xi)\\
&\leq\int_{\mathbf{R}}K_{p}(\xi,\xi-s)e^{v(\eta-s)-v(\eta)+\mu( |\eta-s|+|\xi-s|-|\eta|-|\xi|)}ds-\int_{\mathbf{R}}K_{p}(\eta,\eta-s)e^{v(\eta-s)-v(\eta)}ds
+a(\eta)-a(\xi)\\
&\leq\int_{B_{R}(0)}\big(K_{p}(\xi,\xi-s)e^{2\mu |s|}- K_{p}(\eta,\eta-s)\big)e^{v(\eta-s)-v(\eta)}ds
+a(\eta)-a(\xi)+\frac{\varepsilon\delta}{2}\\
&\leq\int_{B_{R}(0)}\big|(K_{p}(\xi,\xi-s)- K_{p}(\eta,\eta-s))\big|e^{2\mu |s|}
e^{v(\eta-s)-v(\eta)}ds\\
&\quad+\int_{B_{R}(0)}K_{p}(\eta,\eta-s)(e^{2\mu|s|}-1)
e^{v(\eta-s)-v(\eta)}ds+a(\eta)-a(\xi)+\frac{\varepsilon\delta}{2}.
\end{split}
\end{equation}
The third inequality is valid because $\Phi$ reaches its maximum at $(\xi,\eta)$ over $\mathbf{R}^{2}$.
The last inequality is because of (K2) and (K6).
Taking $\mu\to0$ and $\alpha\to+\infty$ (after passing a subsequence) in \eqref{t5.1.1}, we have
$\frac{\varepsilon\delta}{2}\leq0$, which is a contradiction!
\end{proof}
Denote
$$\text{USC}(\mathbf{R})=\{\text{upper semicontinuous functions}\ v:\mathbf{R}\to \mathbf{R}\},$$
$$\text{LSC}(\mathbf{R})=\{\text{lower semicontinuous functions}\ v:\mathbf{R}\to \mathbf{R}\}.$$
\begin{thm}\label{thm-exist-uniq}
Assume that $K:\text{USC}(\mathbf{R})\to\text{USC}(\mathbf{R})$ and (K1), (K2) and (K6) hold. For any fixed $\varepsilon>0$, there is a unique solution $u^{\varepsilon}\in C(\mathbf{R})\cap L^{\infty}(\mathbf{R})$ of
equation \eqref{5.1} such that
\begin{equation}\label{subsuper}
\inf\limits_{x}\frac{\int_{\mathbf{R}}K(x,y)e^{p(y-x)}dy-a(x)}{\varepsilon}\leq
u^{\varepsilon}(x)\leq
\sup\limits_{x}\frac{\int_{\mathbf{R}}K(x,y)e^{p(y-x)}dy-a(x)}{\varepsilon}.
\end{equation}
\end{thm}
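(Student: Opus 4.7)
The plan is to deduce uniqueness and the two-sided bound \eqref{subsuper} directly from Theorem \ref{thm5.1}, and then to establish existence by Perron's method adapted to the present nonlocal setting. Set $\underline{c} := \inf_x \varepsilon^{-1}\!\left(\int_{\mathbf{R}} K_p(x,y)\,dy - a(x)\right)$ and $\bar{c} := \sup_x \varepsilon^{-1}\!\left(\int_{\mathbf{R}} K_p(x,y)\,dy - a(x)\right)$. Substituting $u\equiv\text{const}$ into \eqref{5.1} gives $e^{u(y)-u(x)}\equiv 1$, so the constants $\underline{c}$ and $\bar{c}$ are respectively a bounded subsolution and a bounded supersolution of \eqref{5.1}. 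Theorem \ref{thm5.1} then yields both uniqueness and the bound \eqref{subsuper}, since any bounded solution is squeezed between these two constants.

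For existence I would introduce the Perron class
\[
\mathcal{S} := \{w \in C(\mathbf{R})\cap L^{\infty}(\mathbf{R}) : \underline{c}\le w \le \bar{c},\ w \text{ is a subsolution of } \eqref{5.1}\},
\]
which is nonempty because $\underline{c}\in\mathcal{S}$, and define $u^{\varepsilon}(x) := \sup_{w\in\mathcal{S}} w(x)$. Write $(u^{\varepsilon})^{*}$ and $(u^{\varepsilon})_{*}$ for the upper and lower semicontinuous envelopes of $u^{\varepsilon}$. The heart of the proof is to show: (a) $(u^{\varepsilon})^{*}$ is a bounded USC subsolution of \eqref{5.1}, and (b) $(u^{\varepsilon})_{*}$ is a bounded LSC supersolution of \eqref{5.1}. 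Once (a) and (b) are in hand, a straightforward extension of Theorem \ref{thm5.1} to the USC/LSC setting (its proof, which runs via the doubling $\Phi(x,y)=w(x)-v(y)-\alpha|x-y|-\mu(|x|+|y|)$, needs only that $\Phi$ attain its supremum, and this is still guaranteed when $w$ is USC and $v$ is LSC) yields $(u^{\varepsilon})^{*}\le (u^{\varepsilon})_{*}$. Since trivially $(u^{\varepsilon})_{*}\le u^{\varepsilon}\le (u^{\varepsilon})^{*}$, we conclude that $u^{\varepsilon}$ is continuous and solves \eqref{5.1}.

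Step (a) is the semicontinuous-envelope stability: the hypothesis $K:\mathrm{USC}(\mathbf{R})\to\mathrm{USC}(\mathbf{R})$ is made precisely so that the nonlocal integral $\int K_{p}(x,y)e^{w(y)}dy$ is upper semicontinuous whenever $w$ is, and the uniform integrability (K2) together with the uniform bounds $\underline{c}\le w\le\bar{c}$ let one pass to the upper limit in the integral along a sequence $w_{n}\in\mathcal{S}$ with $w_{n}(x_{n})\to(u^{\varepsilon})^{*}(x_{0})$. Step (b) is the classical bump-function argument: if $(u^{\varepsilon})_{*}$ failed the supersolution inequality at some $x_{0}$ with strict defect, I would build a small localized perturbation $\tilde{u}$ of $u^{\varepsilon}$, supported near $x_{0}$, staying $\le\bar{c}$ and still a subsolution of \eqref{5.1}, but with $\tilde{u}(x_{0})>u^{\varepsilon}(x_{0})$ — contradicting the definition of the supremum.

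The main obstacle is step (b): the exponential coupling $e^{u(y)-u(x)}$ means that a perturbation of $u^{\varepsilon}$ near $x_{0}$ alters the operator at every point $x$, not only on the bump's support. Verifying that a sufficiently small, sufficiently concentrated bump still yields a subsolution therefore requires quantitative estimates. The uniform continuity of $a$ and the kernel continuity (K6) must be used to control the change of the operator at points $x$ close to $x_{0}$, while the uniform integrability (K2) controls the perturbation's effect at points $x$ far from $x_{0}$; the a priori bound $\underline{c}\le u^{\varepsilon}\le\bar{c}$ tames the exponential factor throughout. The bump's height and width must be tuned so that these three error contributions are each dominated by the strict supersolution defect at $x_{0}$.
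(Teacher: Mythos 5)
Your overall strategy is the same as the paper's: the two--sided bound \eqref{subsuper} and uniqueness follow from the comparison principle of Theorem \ref{thm5.1} applied with the constant barriers $\underline{c}/\varepsilon$ and $\overline{c}/\varepsilon$, and existence is obtained by Perron's method, with continuity extracted at the end by comparing the supremum construction with the symmetric infimum-of-supersolutions construction (the paper) respectively the two semicontinuous envelopes (you). The one genuine defect in your plan as written is the choice of the Perron class: you take $\mathcal{S}$ to consist of \emph{continuous} subsolutions, but in your step (b) the perturbed function $\tilde{u}$ is a bump added to $u^{\varepsilon}=\sup_{\mathcal{S}}$, which is in general only lower semicontinuous; hence $\tilde{u}$ is not continuous, is not a member of $\mathcal{S}$, and so does not contradict ``the definition of the supremum''. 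This is exactly why the paper takes the admissible class $A$ to be bounded \emph{upper semicontinuous} subsolutions (and a class $\overline{A}$ of lower semicontinuous supersolutions for the infimum), so that the modified functions used in Lemmas \ref{lem-e-u1} and \ref{lem-e-u2} (the one-point lift, $w+\rho$, $\max\{w,\chi_{n}\}$) remain admissible; you should enlarge $\mathcal{S}$ in the same way, after which your envelope argument goes through.

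Two smaller points of comparison. First, the global effect of the bump that you flag as the main obstacle is in fact harmless: raising a subsolution at points $y\neq x$ only increases $\int_{\mathbf{R}}K_{p}(x,y)e^{\tilde{u}(y)}dy$, which enters the subsolution inequality with the favorable sign, so outside the bump's support no estimate is needed. The quantitative work (uniform continuity of $a$, semicontinuity of $x\mapsto\int K_{p}(x,y)e^{w(y)}dy$, the strict defect $\eta$, and the a priori bound $e^{(\overline{c}-\underline{c})/\varepsilon}$ on the exponential factor) is only needed on and near the bump's support; in the paper this also forces a case analysis on the one-sided limsups of $w$ at $x_{0}$, because the Perron supremum is merely semicontinuous. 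Second, in the paper the hypothesis $K:\mathrm{USC}(\mathbf{R})\to\mathrm{USC}(\mathbf{R})$ is not needed on the subsolution side, where Fatou's lemma already gives the required lower semicontinuity of the nonlocal term; it is invoked for the infimum-of-supersolutions construction, where Fatou is unavailable. In your envelope formulation it plays the analogous role of giving upper semicontinuity of the nonlocal term along the USC envelope, which is consistent with where you use it.
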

Denote $\underline{c}=\inf\limits_{x}\{\int_{\mathbf{R}}K(x,y)e^{p(y-x)}dy-a(x)\}$ and
$\overline{c}=\sup\limits_{x}\{\int_{\mathbf{R}}K(x,y)e^{p(y-x)}dy-a(x)\}$.
Then the solution of \eqref{5.1} in $L^{\infty}(\mathbf{R})$ must satisfies \eqref{subsuper} by
Theorem \ref{thm5.1}.
Hence we only need to show the existence.
Let
$$A=\{v\in \text{USC}(\mathbf{R}): \frac{\underline{c}}{\varepsilon}\leq v(x)\leq\frac{\overline{c}}{\varepsilon}+1, v\ \text{is a subsolution}\}.$$
Here, we say $v$ is a subsolution (supersolution) if
$\varepsilon w(x)-\int_{\mathbf{R}}K_{p}(x,y)e^{w(y)-w(x)}dy+a(x)\leq(\geq)0$.
We prove the existence in several lemmas by using Perron's method.

\begin{lemma}\label{lem-e-u1}
Let $w(x):=\displaystyle{\sup_{v\in A}}v(x).$ Then $w\in A.$
\end{lemma}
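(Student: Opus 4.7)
Three properties must be verified for $w$: the pointwise bounds, upper semicontinuity, and the subsolution inequality. The bounds are immediate. The constant function $\underline{c}/\varepsilon$ lies in $A$: the subsolution inequality reduces to $\underline{c} \le \int_{\mathbf{R}} K_p(x,y)\,dy - a(x)$, which is precisely the definition of $\underline{c}$. Hence $w(x) \ge \underline{c}/\varepsilon$ pointwise, while $w(x) \le \overline{c}/\varepsilon + 1$ is built into $A$.

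For the remaining two properties I would use the standard Perron envelope trick. Set $w^{*}(x) := \limsup_{y \to x} w(y)$, which is automatically USC and still satisfies the pointwise bounds. The aim is to show that $w^{*}$ is a subsolution, hence $w^{*} \in A$; then the definition of $w$ as a supremum gives $w \ge w^{*}$, while always $w \le w^{*}$, forcing $w = w^{*}$, so $w$ is USC. To verify the subsolution property at an arbitrary $x_{0}$, choose $x_{n} \to x_{0}$ with $w(x_{n}) \to w^{*}(x_{0})$, and for each $n$ select $v_{n} \in A$ with $v_{n}(x_{n}) > w(x_{n}) - 1/n$, so that $v_{n}(x_{n}) \to w^{*}(x_{0})$. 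Starting from the subsolution inequality
$$\varepsilon v_{n}(x_{n}) + a(x_{n}) \le \int_{\mathbf{R}} K_{p}(x_{n}, y)\, e^{v_{n}(y) - v_{n}(x_{n})}\, dy,$$
and bounding $v_{n}(y) \le w^{*}(y)$ in the exponent, I would take $\limsup_{n}$. The left-hand side converges to $\varepsilon w^{*}(x_{0}) + a(x_{0})$ by continuity of $a$. The desired conclusion is that the limsup of the right-hand side is bounded by $\int K_{p}(x_{0}, y)\, e^{w^{*}(y) - w^{*}(x_{0})}\, dy$, giving $w^{*}$ the subsolution inequality.

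The main obstacle is passing the limsup through the nonlocal integral with only USC regularity of $w^{*}$. My approach is to pull out the constant factor $e^{w^{*}(x_{0}) - v_{n}(x_{n})} \to 1$, substitute $y = x_{n} - s$ to rewrite the integrand as $K_{p}(x_{n}, x_{n} - s)\, e^{w^{*}(x_{n}-s) - w^{*}(x_{0})}$, and exploit the uniform bound $e^{w^{*}(\cdot) - w^{*}(x_{0})} \le e^{\overline{c}/\varepsilon + 1 - \underline{c}/\varepsilon}$. Splitting at $|s| = R$, assumption (K2) controls the tail uniformly in $n$ as $R \to \infty$. On $\{|s| < R\}$, assumption (K6) yields $L^{1}$-convergence of $K_{p}(x_{n}, x_{n} - \cdot)$ to $K_{p}(x_{0}, x_{0} - \cdot)$, which takes care of the kernel difference; the remaining translate
$$\limsup_{n} \int_{|s|<R} K_{p}(x_{0}, x_{0}-s)\bigl[e^{w^{*}(x_{n}-s) - w^{*}(x_{0})} - e^{w^{*}(x_{0}-s) - w^{*}(x_{0})}\bigr] ds \le 0$$
is handled by reverse Fatou using that $\limsup_{n} w^{*}(x_{n}-s) \le w^{*}(x_{0}-s)$ for each $s$. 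This reverse-Fatou step, combined with the bookkeeping needed so that the uniform tail bound from (K2) applies at shifting centres $x_{n}$, is the delicate part.
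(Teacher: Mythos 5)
Your proof is correct, but it is not the route the paper takes. The paper verifies the subsolution property of $w$ by a purely pointwise contradiction at a fixed test point: if $\varepsilon w(x_{0})-\int_{\mathbf{R}}K_{p}(x_{0},y)e^{w(y)-w(x_{0})}dy+a(x_{0})\geq\eta>0$, it picks $v^{\delta}\in A$ with $v^{\delta}(x_{0})\geq w(x_{0})-\delta$, uses $v^{\delta}\leq w$ inside the exponential together with $-v^{\delta}(x_{0})\leq -w(x_{0})+\delta$, and concludes that $v^{\delta}$ itself violates the subsolution inequality at $x_{0}$ once $\delta$ is small, the error being controlled by $\varepsilon\delta+(e^{\delta}-1)\exp\big(\frac{\overline{c}-\underline{c}}{\varepsilon}\big)\int_{\mathbf{R}}K_{p}(x_{0},y)dy$; since the test point never moves, no limit is passed through the nonlocal term and neither (K2) nor (K6) is needed in this lemma (they are reserved for the comparison argument of Theorem \ref{thm5.1}). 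Your envelope argument --- replace $w$ by its upper semicontinuous regularization $w^{*}$, prove $w^{*}$ is a subsolution via near-optimal $v_{n}$ at points $x_{n}\to x_{0}$ with $w(x_{n})\to w^{*}(x_{0})$, and push the limsup through the integral by splitting at $|s|=R$, using (K2) for the uniform tail, (K6) for the $L^{1}$ kernel shift, and reverse Fatou with the dominant $e^{(\overline{c}-\underline{c})/\varepsilon+1}K_{p}(x_{0},x_{0}-\cdot)$ --- is heavier, but it buys something the paper's proof does not explicitly deliver: upper semicontinuity of $w$, which is literally required for membership in $A$. (The paper instead argues that $w$ is lower semicontinuous, via the inequality $\liminf_{x\to x_{0}}v^{\delta}(x)\geq v^{\delta}(x_{0})$, which is the LSC property even though elements of $A$ are only assumed USC, and defers genuine regularity of $w$ to the final step using the comparison theorem; your version closes that bookkeeping cleanly.) Two small points to fix in your write-up: define $w^{*}$ by the unpunctured envelope $w^{*}(x)=\lim_{r\to 0^{+}}\sup_{|y-x|\leq r}w(y)$ so that $w\leq w^{*}$ is automatic, and note that the convergence of the left-hand side $\varepsilon v_{n}(x_{n})+a(x_{n})$ uses the standing continuity of $a$.
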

\begin{proof}
It is obvious that $\frac{\underline{c}}{\varepsilon}\leq w(x)\leq\frac{\overline{c}}{\varepsilon}+1$
for any $x\in\mathbf{R}$. In fact, $w(x)\leq\frac{\overline{c}}{\varepsilon}$
since $\frac{\overline{c}}{\varepsilon}$ is a supersolution.
Now for any fixed $x_{0}\in\mathbf{R}$, any $\delta>0$,
there exist $v^{\delta}\in A$ such that $v^{\delta}(x_{0})\geq w(x_{0})-\delta$. Therefore,
$$\displaystyle{\liminf_{x\rightarrow x_{0}}}w(x)\geq
\displaystyle{\liminf_{x\rightarrow x_{0}}}v^{\delta}(x)\geq
v^{\delta}(x_{0})\geq w(x_{0})-\delta.$$
Since $\delta$ is arbitrarily, we have $\displaystyle{\liminf_{x\rightarrow x_{0}}}w(x)\geq w(x_{0})$.
Hence $w\in\text{LSC}(\mathbf{R})$. Next, we prove that $w$ is a subsolution. If not, then there exist
$x_{0}$ and $\eta>0$ such that
$\varepsilon w(x_{0})-\int_{\mathbf{R}}K_{p}(x_{0},y)e^{w(y)-w(x_{0})}dy+a(x_{0})\geq\eta.$ Hence
\begin{equation*}
\begin{split}
&\ \ \ \ \varepsilon v^{\delta}(x_{0})
 -\int_{\mathbf{R}}K_{p}(x_{0},y)e^{v^{\delta}(y)-v^{\delta}(x_{0})}dy+a(x_{0})\\
&\geq\varepsilon w(x_{0})-\varepsilon\delta
 -\int_{\mathbf{R}}K_{p}(x_{0},y)e^{w(y)-v^{\delta}(x_{0})}dy+a(x_{0})\\
&\geq\varepsilon w(x_{0})-\varepsilon\delta
 -\int_{\mathbf{R}}K_{p}(x_{0},y)e^{w(y)-w(x_{0})+\delta}dy+a(x_{0})\\
&\geq\eta-\varepsilon\delta
 -\int_{\mathbf{R}}K_{p}(x_{0},y)e^{w(y)-w(x_{0})}(e^{\delta}-1)dy\\
&\geq\eta-\varepsilon\delta
 -\int_{\mathbf{R}}K_{p}(x_{0},y)dy\exp(\frac{\overline{c}-\underline{c}}{\varepsilon})(e^{\delta}-1)\\
&>0\\
\end{split}
\end{equation*}
for $\delta$ small enough, which is contradicts $v^{\delta}\in A$. From all above, we know that $w\in A$.
\end{proof}

\begin{lemma}\label{lem-e-u2}
$w$ is a supersolution. Moreover,
$\varepsilon w(x)-\int_{\mathbf{R}}K_{p}(x,y)e^{w(y)-w(x)}dy+a(x)=0.$
\end{lemma}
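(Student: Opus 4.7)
The approach is Perron's method: from Lemma~\ref{lem-e-u1} we already have $w\in A$, so $w$ is a subsolution; it only remains to show that $w$ is also a supersolution, after which the pointwise identity in the statement follows at once by combining both inequalities. Throughout, write $F[v](x):=\varepsilon v(x)-\int_{\mathbf{R}} K_p(x,y)e^{v(y)-v(x)}\,dy+a(x)$.

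Suppose toward a contradiction that there exist $x_0\in\mathbf{R}$ and $\eta>0$ with $F[w](x_0)<-3\eta$. The first step is to promote this strict inequality to a ball $B_r(x_0)$. Using (K2) one truncates the integral to a large centred ball of radius $N$ with uniform error $o(1)$ as $N\to\infty$; on the truncated ball (K6) controls the variation of $K_p(x,\cdot)$ as $x\to x_0$; continuity of $a$ handles the last term; and the pointwise factor $e^{w(y)-w(x)}$ is handled by dominated convergence using $\|w\|_\infty<\infty$ together with the one-sided continuity of $w$ established in Lemma~\ref{lem-e-u1}. This yields $F[w](x)<-2\eta$ on some $B_r(x_0)$.

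Next I construct a local bump perturbation. Choose $\rho\in C_c^\infty(B_r(x_0))$ with $0\le \rho\le 1$ and $\rho(x_0)=1$, and set $\tilde w := w + \delta\rho$ for $\delta>0$ to be chosen. For $x\notin B_r(x_0)$ we have $\tilde w(x)=w(x)$ and $\rho(y)\ge 0$, so the integral term can only grow and $F[\tilde w](x)\le F[w](x)\le 0$. For $x\in B_r(x_0)$, a direct expansion yields
\[
F[\tilde w](x)-F[w](x)=\varepsilon\delta\rho(x) - \int_{\mathbf{R}} K_p(x,y)\, e^{w(y)-w(x)}\bigl(e^{\delta(\rho(y)-\rho(x))}-1\bigr)\,dy,
\]
which by $\|w\|_\infty<\infty$, the elementary bound $|e^{\delta\sigma}-1|\le 2\delta$ for $|\sigma|\le 1$, and the uniform finiteness of $\int K_p(x,y)\,dy$ from (K1), is $O(\delta)$ uniformly on $B_r(x_0)$. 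Taking $\delta$ small enough gives $F[\tilde w]\le -\eta<0$ on $B_r(x_0)$, so $\tilde w$ is a subsolution globally. Since $w\le \overline c/\varepsilon$ by Theorem~\ref{thm5.1} applied against the constant supersolution $\overline c/\varepsilon$, the bound $\tilde w\le \overline c/\varepsilon+1$ holds for $\delta$ small, hence $\tilde w\in A$. But $\tilde w(x_0)=w(x_0)+\delta>w(x_0)$, contradicting $w=\sup_{v\in A} v$.

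The principal obstacle is the first step: $w$ is only semicontinuous, so continuity of $x\mapsto F[w](x)$ at $x_0$ is not automatic and must be extracted by combining (K2) (uniform integrability at infinity), (K6) (uniform continuity of the kernel on bounded sets), $\|w\|_\infty<\infty$, and continuity of $a$; this is essentially the only place these kernel hypotheses enter the argument.
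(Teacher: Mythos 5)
There is a genuine gap at the very first step, and it is precisely the difficulty the paper's proof is built around. You claim that from $F[w](x_0)<-3\eta$ you can deduce $F[w](x)<-2\eta$ on a ball $B_r(x_0)$, invoking (K2), (K6), continuity of $a$, dominated convergence and ``the one-sided continuity of $w$ established in Lemma~\ref{lem-e-u1}''. But Lemma~\ref{lem-e-u1} only gives \emph{lower} semicontinuity of $w$ ($\liminf_{x\to x_0}w(x)\ge w(x_0)$); it gives no upper control of $w$ near $x_0$. Writing $F[w](x)=\varepsilon w(x)+a(x)-e^{-w(x)}\int_{\mathbf{R}}K_{p}(x,y)e^{w(y)}dy$, an upward jump of $w$ at points $x$ arbitrarily close to $x_0$ pushes $F[w](x)$ \emph{up} in both places where $w(x)$ enters (the term $\varepsilon w(x)$ grows and the damping factor $e^{-w(x)}$ shrinks the negative integral term), and at such points all you know is $F[w](x)\le 0$ because $w$ is a subsolution — not $\le-2\eta$. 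Dominated convergence is irrelevant here: the obstruction is the $x$-dependence through $w(x)$, not the integrand in $y$. Consequently, when the bump $\tilde w=w+\delta\rho$ is evaluated at such a point inside $\mathrm{supp}\,\rho$, the $O(\delta)$ increase need not be absorbed and $\tilde w$ may fail to be a subsolution there, so the contradiction with maximality of $w$ does not follow.

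Your overall strategy (Perron bump plus the comparison principle to get the equality) is the same as the paper's, but the paper's proof is organized exactly to avoid assuming any continuity of $w$ at $x_0$: it first proves the auxiliary claim $\liminf_{x\to x_0}w(x)=w(x_0)$ (by bumping $w$ at the single point $x_0$), uses Fatou's lemma to get lower semicontinuity of $x\mapsto\int K_p(x,y)e^{w(y)}dy$, and then splits into two cases. In Case 1, where $\limsup_{x\to x_0^{\pm}}w(x)=w(x_0)$ on at least one side, $w$ stays within $\delta$ of $w(x_0)$ on a one-sided interval and a smooth bump supported in that interval works — this is the only situation in which your ball argument is essentially valid. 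In Case 2, where $w$ jumps up on both sides of $x_0$, the paper replaces the smooth bump by $w_n=\max\{w,\chi_n\}$ with $\chi_n\equiv w(x_0)+\delta$ on a tiny interval $(x_n,y_n)$ pinned between nearby high points of $w$; the modification then changes $w$ only at points where the new value is the controlled constant $w(x_0)+\delta$, which is what lets the strict violation at $x_0$ be transferred. Note also that continuity of $w$ is only obtained \emph{after} this lemma, by comparing $w$ with $\overline{w}$ via Theorem~\ref{thm5.1}, so it cannot be used inside this proof. To repair your argument you would need to add the claim $\liminf_{x\to x_0}w=w(x_0)$ and the two-case analysis (or some equivalent device handling upward jumps of $w$ near $x_0$).
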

\begin{proof}
It is sufficient to show $\varepsilon w(x)-\int_{\mathbf{R}}K_{p}(x,y)e^{w(y)-w(x)}dy+a(x)\geq0.$
If not, then there exist $x_{0}$ and $\eta>0$ such that
$(\varepsilon w(x_{0})+a(x_{0}))e^{w(x_{0})}-\int_{\mathbf{R}}K_{p}(x_{0},y)e^{w(y)}dy\leq-\eta.$
First, we have
$$\liminf \limits_{x\to x_{0}}\int_{\mathbf{R}}K_{p}(x,y)e^{w(y)}dy\geq
\int_{\mathbf{R}}\liminf \limits_{x\to x_{0}}K_{p}(x,y)e^{w(y)}dy
=\int_{\mathbf{R}}K_{p}(x_{0},y)e^{w(y)}dy$$
by Fatou's Lemma, 
i.e., $\int_{\mathbf{R}}K_{p}(x,y)e^{w(y)}dy$ is a lower
semicontinuous function. Then for any $\delta>0$, there exists $\sigma_{1}>0$ such that
\begin{equation}\label{Klsc}
\int_{\mathbf{R}}K_{p}(x,y)e^{w(y)}dy\geq\int_{\mathbf{R}}K_{p}(x_{0},y)e^{w(y)}dy-\delta,\ \forall x\in B_{\sigma_{1}}(x_{0}).
\end{equation}
Claim: $\liminf \limits_{x\to x_{0}}w(x)=w(x_{0}).$\\
Proof of Claim: If $\liminf \limits_{x\to x_{0}}w(x)>w(x_{0})$, then we can prove that $\tilde{w}$
is a subsolution by a similar computation to Lemma \ref{lem-e-u1}, where
$$\tilde{w}(x)=\left\{
   \begin{aligned}
 w(x)& \ \ x\neq x_{0},\\
 w(x)+\delta& \ \ x=x_{0},\\
   \end{aligned}
   \right.$$
with $\delta\in(0,\liminf \limits_{x\to x_{0}}w(x)-w(x_{0}))$ sufficiently small. Hence $\tilde{w}\in A$.
In particular, $\tilde{w}(x_{0})\leq w(x_{0})$, which is contradicts $\tilde{w}(x_{0})=w(x_{0})+\delta$.
Thus $\liminf \limits_{x\to x_{0}}w(x)=w(x_{0}).$

Note that $\limsup \limits_{x\to x_{0}^{\pm}}w(x)\geq\liminf \limits_{x\to x_{0}^{\pm}}w(x)\geq\liminf \limits_{x\to x_{0}}w(x)=w(x_{0}).$ Then there are two cases we need consider.\\
Case 1: $\limsup \limits_{x\to x_{0}^{+}}w(x)=w(x_{0})$ or
$\limsup \limits_{x\to x_{0}^{-}}w(x)=w(x_{0})$.\\
We will prove the case where $\limsup \limits_{x\to x_{0}^{-}}w(x)=w(x_{0})$. One can prove similarly when $\limsup \limits_{x\to x_{0}^{+}}w(x)=w(x_{0})$.
For any $\delta>0$, there exists $\sigma_{2}>0$ such that $|w(x)-w(x_{0})|\leq\delta$ for any $x\in(x_{0}-\sigma_{2},x_{0}).$
Let $0<\sigma_{0}<\min\{\sigma_{1},\sigma_{2}\}$ and $\delta\leq\frac{\eta}{4}$ will be chosen later.
Then for any $x\in(x_{0}-\sigma_{0},x_{0}),$ we have
\begin{equation}\label{Klsc1}
\begin{split}
&\ \ \ \ (\varepsilon w(x)+a(x))e^{w(x)}-\int_{\mathbf{R}}K_{p}(x,y)e^{w(y)}dy\\
&\leq(\varepsilon w(x_{0})+a(x_{0}))e^{w(x_{0})}-\int_{\mathbf{R}}K_{p}(x_{0},y)e^{w(y)}dy+\delta\\
&\quad-(\varepsilon w(x_{0})+a(x_{0}))e^{w(x_{0})}+(\varepsilon w(x)+a(x))e^{w(x)}\\
&\leq-\eta+\delta-(\varepsilon w(x_{0})+a(x_{0}))e^{w(x_{0})}+(\varepsilon w(x)+a(x))e^{w(x)}\\
&\leq-\frac{3\eta}{4}+\big((\varepsilon w(x)+a(x))e^{w(x)}-(\varepsilon w(x_{0})+a(x_{0}))e^{w(x_{0})}\big)\\
&\leq-\frac{\eta}{2}
\end{split}
\end{equation}
for $\sigma_{0}$ small enough by the continuity of $w$ and $a$.
Consider $\tilde{w}=w+\rho$, where $\rho(x)\geq0$ is smooth with $\text{supp}\rho\subset(x_{0}-\sigma_{0},x_{0})$.
Denote $\rho_{0}=\sup \rho(x)$. Then we can prove that $\tilde(w)$ is a subsolution. In fact,
if $x\notin(x_{0}-\sigma_{0},x_{0})$, then
\begin{equation*}
\begin{split}
&\ \ \ \ (\varepsilon \tilde{w}(x)+a(x))e^{\tilde{w}(x)}-\int_{\mathbf{R}}K_{p}(x,y)e^{\tilde{w}(y)}dy\\
&=(\varepsilon {w}(x)+a(x))e^{{w}(x)}-\int_{\mathbf{R}}K_{p}(x,y)e^{\tilde{w}(y)}dy\\
&\leq(\varepsilon {w}(x)+a(x))e^{{w}(x)}-\int_{\mathbf{R}}K_{p}(x,y)e^{{w}(y)}dy\\
&\leq0.
\end{split}
\end{equation*}
If $x\in(x_{0}-\sigma_{0},x_{0})$, then using \eqref{Klsc1}, we have
\begin{equation*}
\begin{split}
&\ \ \ \ (\varepsilon \tilde{w}(x)+a(x))e^{\tilde{w}(x)}-\int_{\mathbf{R}}K_{p}(x,y)e^{\tilde{w}(y)}dy\\
&\leq(\varepsilon \tilde{w}(x)+a(x))e^{\tilde{w}(x)}-\int_{\mathbf{R}}K_{p}(x,y)e^{{w}(y)}dy\\
&=(\varepsilon {w}(x)+a(x))e^{{w}(x)}-\int_{\mathbf{R}}K_{p}(x,y)e^{{w}(y)}dy\\
&\quad+(\varepsilon \tilde{w}(x)+a(x))e^{\tilde{w}(x)}-(\varepsilon {w}(x)+a(x))e^{{w}(x)}\\
&\leq-\frac{\eta}{2}+(\varepsilon \tilde{w}(x)+a(x))e^{\tilde{w}(x)}-(\varepsilon {w}(x)+a(x))e^{{w}(x)}\\
&\leq0
\end{split}
\end{equation*}
for $\rho_{0}$ small enough by the boundedness of $w$ and $a$.
Therefore, we obtain $\tilde{w}\in A$, which is contradicts $\tilde{w}\gvertneqq w$.\\
Case 2: $\limsup \limits_{x\to x_{0}^{\pm}}w(x)>w(x_{0})$.\\
There exist $\theta>0$, an increasing sequence $\{x_{n}\}_{n=1}^{\infty}$ with $\lim\limits_{n\to\infty}x_{n}=x_{0}$,
and a decreasing sequence $\{y_{n}\}_{n=1}^{\infty}$ with $\lim\limits_{n\to\infty}y_{n}=x_{0}$ such that
$w(x_{n})>w(x_{0})+\theta$ and $w(y_{n})>w(x_{0})+\theta$.
Moreover, for any fixed $n\in\mathbf{N}$, there always exist neighbourhoods $U(x_{n})$ and $U(y_{n})$
such that $w(x)\geq w(x_{n})-\frac{\theta}{2}>w(x_{0})+\frac{\theta}{2}$  for any $x\in U(x_{n})$ and
$w(x)\geq w(y_{n})-\frac{\theta}{2}>w(x_{0})+\frac{\theta}{2}$ for any $x\in U(y_{n})$. Therefore,
we can find a function $\chi_{n}$ satisfies:\\
(i). $\chi_{n}$ is continuous on $\mathbf{R}\setminus\{x_{n},y_{n}\},$\\
(ii). $$\chi_{n}(x)\left\{
   \begin{aligned}
 &=w(x)+\delta,\ &\ x\in(x_{n},y_{n}),\\
 &\leq w(x),\ &\ x\notin(x_{n},y_{n}),\\
   \end{aligned}
   \right.$$
where $\delta\in(0,\max\{\frac{\theta}{2}, \frac{\eta}{2}, 1\})$ will be chosen later.

Let ${w}_{n}(x)=\max\{w(x), \chi_{n}(x)\}$. then ${w}_{n}$ is lower semicontinuous and
$w_{n}(x_{0})=w(x_{0})+\delta>w(x_{0})$. For $\sigma_{1}$ given in \eqref{Klsc}, there exists
$N\in\mathbf{N}$ such that $(x_{n},y_{n})\subset B_{\sigma_{1}}(x_{0})$ for any $n\geq N$.
That is to say, for any $\delta>0$, there exists $N\in\mathbf{N}$ such that \eqref{Klsc}
is still valid as long as $x\in (x_{n},y_{n})$ with $n\geq N$.

If $x\in\{x: \chi_{n}\leq w(x)\}$, then $w_{n}(x)=w(x)$ and
\begin{equation}\label{Klsc2}
\begin{split}
&\ \ \ \ (\varepsilon w_{n}(x)+a(x))e^{w_{n}(x)}-\int_{\mathbf{R}}K_{p}(x,y)e^{w_{n}(y)}dy\\
&=(\varepsilon w(x)+a(x))e^{w(x)}-\int_{\mathbf{R}}K_{p}(x,y)e^{w_{n}(y)}dy\\
&\leq(\varepsilon w(x)+a(x))e^{w(x)}-\int_{\mathbf{R}}K_{p}(x,y)e^{w(y)}dy\\
&\leq0.
\end{split}
\end{equation}
If $x\in\{x: \chi_{n}>w(x)\}=\{x: w(x)<w(x_{0})+\delta\}\cap(x_{n},y_{n})\subset(x_{n},y_{n})$,
then $w_{n}(x)=w(x_{0})+\delta$. Using \eqref{Klsc}, we have
\begin{equation}\label{Klsc3}
\begin{split}
&\ \ \ \ (\varepsilon w_{n}(x)+a(x))e^{w_{n}(x)}-\int_{\mathbf{R}}K_{p}(x,y)e^{w_{n}(y)}dy\\
&\leq(\varepsilon w(x_{0})+a(x_{0}))e^{w(x_{0})}-\int_{\mathbf{R}}K_{p}(x,y)e^{w(y)}dy\\
&\quad+(\varepsilon w_{n}(x)+a(x))e^{w_{n}(x)}-(\varepsilon {w}(x_{0})+a(x_{0}))e^{{w}(x_{0})}\\
&\leq(\varepsilon w(x_{0})+a(x_{0}))e^{w(x_{0})}-\int_{\mathbf{R}}K_{p}(x_{0},y)e^{w(y)}dy+\delta\\
&\quad+(\varepsilon w_{n}(x)+a(x))e^{w_{n}(x)}-(\varepsilon {w}(x_{0})+a(x_{0}))e^{{w}(x_{0})}\\
&\leq-\frac{\eta}{2}+(\varepsilon w_{n}(x)+a(x))e^{w_{n}(x)}-(\varepsilon {w}(x_{0})+a(x_{0}))e^{{w}(x_{0})}\\
&\leq0
\end{split}
\end{equation}
for $\delta$ small and $N$ large enough by the continuity of $a$.
It follows from \eqref{Klsc2} and \eqref{Klsc3} that $w_{n}$ is a subsolution when $n$ is large enough.
Hence $w_{n}\in A$, which is contradicts ${w}_{n}\gvertneqq w$.
Both Case 1 and Case 2 can not occur. Thus the proof is complete.
\end{proof}

Let $\overline{A}=\{v\in \text{LSC}(\mathbf{R}): \frac{\underline{c}}{\varepsilon}-1\leq v(x)\leq\frac{\overline{c}}{\varepsilon}, v\ \text{is a supersolution}\}$
and $\overline{w}(x):=\displaystyle{\inf_{v\in A}}v(x).$ Then $\overline{w}\in\overline{A}$ and
$$\varepsilon \overline{w}(x)-\int_{\mathbf{R}}K_{p}(x,y)e^{\overline{w}(y)-\overline{w}(x)}dy+a(x)=0.$$
One can prove it by almost the same argument as before. We only point out that we need the assumption
$K:\text{USC}(\mathbf{R})\to\text{USC}(\mathbf{R})$ because we can't use Fatou's Lemma this time.
By Theorem \ref{thm5.1}, we obtain that the solution $w(x)=\overline(w)(x)$ is continuous.

\subsection{Almost periodic coefficients}
In this subsection,  we always assume that\\
(K3)$^{\prime}$ $K(x,y)\geq\theta_{0}$ for $|x-y|\leq\Delta$, and $K(x,y)\leq k_{0}\ \forall(x,y)\in\mathbf{R}^{2}$, and\\
(K7) $\exists \ \varepsilon_{0}>0$ and $r(\varepsilon)$ s.t.
$$\alpha_{p}:=\sup\limits_{x\in\mathbf{R},\varepsilon\in[0,\varepsilon_{0}]}
\frac{\int_{B_{r(\varepsilon)}^{c}(x)}K_{p}(x,y)dy}
{\int_{\mathbf{R}}K_{p}(x,y)dy}\exp(\frac{\overline{c}-\underline{c}}{\varepsilon})<1$$
and $\varepsilon r(\varepsilon)\to0$ as $\varepsilon\to0^{+}$.\\
One can easily verify that (K7) holds if $K$ is compact support. By compact support we mean that:\\
(K7)$^{\prime}$ $\exists r_{0}(>\Delta_{0})$ s.t. $K(x,y)=0$ if $|x-y|\geq r_{0}$. (This yields (K2).)

Under some assumptions (c.f. Theorem \ref{thm5.3}), we will prove that
$\underline{\omega}=\overline{\omega}$
when the media is almost periodic, here almost periodic media means that:\\
(K8) For any sequence $\{x_{n}\}$ there exists a subsequence still denoted by $x_{n}$ s.t.
\begin{equation}\label{5.2.1}
\int_{B_{r_{0}}(0)}\big|K_{p}(x+x_{n},x+x_{n}-s)-K_{p}(x+x_{m},x+x_{m}-s)\big|ds\to0\ \text{as}\ n,m\to\infty
\end{equation}
uniformly w.r.t $x\in\mathbf{R}$, and\\
(K9) $a(x)=b(x)-f_{s}^{\prime}(x,0)$ is almost periodic, i.e., for any sequence $x_{n}$,
there exists a subsequence $x_{n_{k}}$ such that $a(\cdot+x_{n_{k}})$ converges in
$C(\mathbf{R})$.\\

We need the following Harnack type inequality
\begin{thm}\label{thm5.2}
Assume that (K1), (K2) and (K6) hold, $u^{\varepsilon}$ satisfies \eqref{5.1}.
For any $x,y\in\mathbf{R}$ with $|x-y|<r$, there exist constants $C_{0}, C_{1}$ depending on
$r, K, p,$ and $f_{s}^{\prime}$, but independent of $\varepsilon, x,$ and $y$ such that
$$\phi^{\varepsilon}(y)\geq C_{0}C_{1}^{O(r(\varepsilon))}(a(x)+\varepsilon u^{\varepsilon}(x))\phi^{\varepsilon}(x),$$
where $\phi^{\varepsilon}=e^{u^{\varepsilon}}$.
\end{thm}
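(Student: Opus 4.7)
The strategy is to rewrite \eqref{5.1} multiplicatively and iterate. Multiplying \eqref{5.1} by $\phi^\varepsilon(x)=e^{u^\varepsilon(x)}$ gives the pointwise identity
$$(a(x)+\varepsilon u^\varepsilon(x))\phi^\varepsilon(x)=\int_{\mathbf{R}}K_p(x,z)\phi^\varepsilon(z)\,dz,$$
so that $\phi^\varepsilon$ is a positive solution of a linear integral equation with spatially varying positive ``eigenvalue'' $V(x):=a(x)+\varepsilon u^\varepsilon(x)$, which is positive since $K_p\not\equiv 0$. Iterating this identity $n$ times from $y$ (with $w_0:=y$) yields
$$\phi^\varepsilon(y)=\int_{\mathbf{R}^n}\prod_{i=0}^{n-1}\frac{K_p(w_i,w_{i+1})}{V(w_i)}\,\phi^\varepsilon(w_n)\,dw_1\cdots dw_n.$$

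Choosing $n=\lceil 2(r+r(\varepsilon))/\Delta\rceil=O(r(\varepsilon))$ and restricting integration to admissible tuples with $|w_i-w_{i-1}|\leq\Delta$ and $w_n\in B_{r(\varepsilon)}(x)$, assumption (K3)$^{\prime}$ gives $K_p(w_i,w_{i+1})\geq\theta_0 e^{-|p|\Delta}$ and \eqref{subsuper} gives $V(w_i)\leq\|a\|_\infty+\overline{c}$. A straight-line interpolation $w_i=y+(i/n)(w_n-y)+\delta_i$ with $\delta_i\in[-\Delta/4,\Delta/4]$ shows that for each $w_n\in B_{r(\varepsilon)}(x)$ the $(n-1)$-fold volume of admissible $(w_1,\ldots,w_{n-1})$ is at least $(\Delta/2)^{n-1}$. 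These combine to
$$\phi^\varepsilon(y)\geq C_2^{\,O(r(\varepsilon))}\int_{B_{r(\varepsilon)}(x)}\phi^\varepsilon(z)\,dz,\qquad C_2:=\frac{\theta_0 e^{-|p|\Delta}\,\Delta}{2(\|a\|_\infty+\overline{c})}.$$
On the other hand, applying the identity at $x$ and splitting $\int_{\mathbf{R}}=\int_{B_{r(\varepsilon)}(x)}+\int_{B_{r(\varepsilon)}^c(x)}$, the tail is handled by (K7) together with the $L^\infty$ bounds in \eqref{subsuper}: since $\phi^\varepsilon(z)\leq e^{\overline{c}/\varepsilon}$ and $\phi^\varepsilon(y)\geq e^{\underline{c}/\varepsilon}$,
$$\int_{B_{r(\varepsilon)}^c(x)}K_p(x,z)\phi^\varepsilon(z)\,dz\leq \alpha_p M_p\,e^{\underline{c}/\varepsilon}\leq \alpha_p M_p\,\phi^\varepsilon(y),$$
where $M_p:=\sup_x\int_{\mathbf{R}}K_p(x,z)\,dz$. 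Bounding $K_p(x,z)\leq k_0 e^{|p|r(\varepsilon)}$ on $B_{r(\varepsilon)}(x)$ then yields
$$(a(x)+\varepsilon u^\varepsilon(x))\phi^\varepsilon(x)\leq k_0 e^{|p|r(\varepsilon)}\int_{B_{r(\varepsilon)}(x)}\phi^\varepsilon(z)\,dz+\alpha_p M_p\,\phi^\varepsilon(y),$$
and substituting the iterated lower bound and solving for $\phi^\varepsilon(y)$ gives
$$\phi^\varepsilon(y)\geq \frac{C_2^{\,O(r(\varepsilon))}}{k_0 e^{|p|r(\varepsilon)}+\alpha_p M_p\, C_2^{\,O(r(\varepsilon))}}\,(a(x)+\varepsilon u^\varepsilon(x))\phi^\varepsilon(x),$$
which is of the announced form once the two exponentials $C_2^{O(r(\varepsilon))}$ and $e^{|p|r(\varepsilon)}$ are consolidated into a single factor $C_0 C_1^{O(r(\varepsilon))}$.

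The main obstacle is the iterated lower bound, in particular the careful bookkeeping of the $(n-1)$-fold admissible path volume so that the resulting constant ends up independent of $\varepsilon$, $x$, $y$. Two features are essential: (K3)$^{\prime}$ provides a pointwise positive lower bound on $K_p$ at each single step, which drives the iteration, while (K7) absorbs the divergent weight $e^{(\overline{c}-\underline{c})/\varepsilon}$ in the tail estimate, making the tail comparable to $\phi^\varepsilon(y)$ rather than the naive $e^{\overline{c}/\varepsilon}$. The matching of scales --- the cutoff $r(\varepsilon)$ in (K7) and the step count $n=O(r(\varepsilon))$ in the iteration --- is precisely what allows the two otherwise divergent factors $k_0 e^{|p|r(\varepsilon)}$ and $C_2^{-O(r(\varepsilon))}$ to combine cleanly into a single $C_1^{O(r(\varepsilon))}$.
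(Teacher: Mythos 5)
Your proof is correct, and it follows the same two-step skeleton as the paper's argument --- (i) bound $(a(x)+\varepsilon u^{\varepsilon}(x))\phi^{\varepsilon}(x)$ by the local integral $\int_{B_{r(\varepsilon)}(x)}K_{p}\phi^{\varepsilon}$ plus a tail controlled through $\alpha_{p}$ from (K7) and the two-sided bound \eqref{subsuper}, then (ii) dominate that local integral by $C^{-O(r(\varepsilon))}\phi^{\varepsilon}(y)$ --- but it differs in how the two steps are carried out. For step (ii) the paper simply iterates two Harnack-type lemmas quoted from Coville (Lemmas \ref{lem5.1} and \ref{lem5.2}), whereas you prove the needed estimate from scratch by iterating the identity $(a+\varepsilon u^{\varepsilon})\phi^{\varepsilon}=\int K_{p}(\cdot,z)\phi^{\varepsilon}(z)\,dz$ along chains of $n=O(r(\varepsilon))$ steps of length at most $\Delta$, using the lower bound $K\geq\theta_{0}$ from (K3)$'$ and the straight-line volume count $(\Delta/2)^{n-1}$; this makes the proof self-contained at no real cost, since all constants are swallowed by the $C_{1}^{O(r(\varepsilon))}$ factor anyway. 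For step (i) the paper absorbs the tail into the left-hand side and divides by $1-\alpha_{p}$, which uses $\alpha_{p}<1$; you instead bound the tail by $\alpha_{p}M_{p}\phi^{\varepsilon}(y)$ via $\phi^{\varepsilon}\leq e^{\overline{c}/\varepsilon}$, $\phi^{\varepsilon}(y)\geq e^{\underline{c}/\varepsilon}$ and keep it on the right, which only needs $\alpha_{p}$ finite --- a slightly more robust variant. One presentational caveat: although the theorem header lists only (K1), (K2), (K6), your explicit use of (K3)$'$ and (K7) is legitimate because they are standing assumptions of this subsection, and the paper's own proof uses them too (through $\theta_{0}$, $k_{0}$, $\Delta$ in Coville's lemmas and through $\alpha_{p}$); it would be worth saying this explicitly, and also noting that $V=a+\varepsilon u^{\varepsilon}>0$ pointwise because $V\phi^{\varepsilon}=\int K_{p}\phi^{\varepsilon}\geq\theta_{0}e^{-|p|\Delta}\int_{B_{\Delta}}\phi^{\varepsilon}>0$, which is the precise justification behind your remark that $V$ is positive.
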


\begin{lemma}\label{lem5.1}
Assume that (K1), (K2) and (K6) hold (except that $K$ is bounded),
$u^{\varepsilon}$ satisfies \eqref{5.1}. For any $y\in\mathbf{R}$, $0<\delta\leq\Delta$,
there exists a constant $C_{0}$ depending on $K, p, \delta$ and $f_{s}^{\prime}$, but independent of $\varepsilon, y$ such that
$$\phi(y)\geq C_{0}\int_{B_{\delta}(y)}\phi(s)ds.$$
\end{lemma}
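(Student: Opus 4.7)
The plan is to read off the inequality directly from the integral equation that $\phi := e^{u^\varepsilon}$ satisfies. Multiplying equation \eqref{5.1} through by $\phi(x) = e^{u^\varepsilon(x)}$ yields the identity
\[
\int_{\mathbf{R}} K_p(x,s)\,\phi(s)\,ds \;=\; \bigl(\varepsilon u^\varepsilon(x)+a(x)\bigr)\phi(x).
\]
Thus $\phi(y)$ controls an entire weighted integral of $\phi$ against $K_p(y,\cdot)$, and the task reduces to two independent estimates: a pointwise lower bound on $K_p(y,\cdot)$ over $B_\delta(y)$, and a uniform upper bound on the coefficient $\varepsilon u^\varepsilon(y)+a(y)$.

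For the kernel bound, restrict the integration domain to $B_\delta(y) \subseteq B_\Delta(y)$ and invoke assumption (K3)$'$ to get $K(y,s)\geq\theta_0$ there; combining with the trivial estimate $e^{p(s-y)}\geq e^{-|p|\delta}$ for $|s-y|\leq\delta$ gives
\[
\int_{\mathbf{R}} K_p(y,s)\,\phi(s)\,ds \;\geq\; \theta_0\, e^{-|p|\delta}\int_{B_\delta(y)}\phi(s)\,ds.
\]
For the coefficient bound, invoke the a priori estimate \eqref{subsuper} from Theorem \ref{thm-exist-uniq}, which says $\varepsilon u^\varepsilon(y)\leq\overline{c}$, together with $\|a\|_\infty<\infty$ (immediate from (K1) and boundedness of $b$ and $f_s'(\cdot,0)$). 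This produces $\varepsilon u^\varepsilon(y)+a(y)\leq\overline{c}+\|a\|_\infty$, a finite constant depending only on $K$, $p$, and $f_s'$ — crucially, independent of $\varepsilon$ and $y$.

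Dividing the two bounds yields the lemma with
\[
C_0 \;=\; \frac{\theta_0\, e^{-|p|\delta}}{\overline{c}+\|a\|_\infty}.
\]
There is no real obstacle here: the only non-trivial input is that \eqref{subsuper} provides a $y$- and $\varepsilon$-uniform ceiling on $\varepsilon u^\varepsilon$, and everything else is a one-line manipulation of the equation using (K3)$'$. I expect this lemma is really a single "one-step" estimate that will be iterated across overlapping balls (using (K7), so that $r(\varepsilon)$-sized neighborhoods can be chained) in the proof of the full Harnack inequality of Theorem \ref{thm5.2}, which explains why the exponent $O(r(\varepsilon))$ appears there but not here.
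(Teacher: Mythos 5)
Your proof is correct, and it is self-contained where the paper is not: the paper's ``proof'' of Lemma \ref{lem5.1} is just the citation \cite[Lemma 2.1]{C}, and what you write is essentially the standard one-step estimate behind that cited lemma, adapted to the present equation. The adaptation is the only point of substance, and you handle it properly: here the zeroth-order coefficient is $\varepsilon u^{\varepsilon}+a$ rather than a fixed bounded function as in Coville's setting, so to make $C_{0}$ independent of $\varepsilon$ and $y$ one needs the uniform ceiling $\varepsilon u^{\varepsilon}(y)+a(y)\leq\overline{c}+\|a\|_{\infty}$, which is exactly what \eqref{subsuper} supplies (the paper notes right after Theorem \ref{thm-exist-uniq} that any bounded solution of \eqref{5.1} satisfies \eqref{subsuper} by the comparison Theorem \ref{thm5.1}, so invoking it for the solution $u^{\varepsilon}$ considered here is legitimate). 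Two bookkeeping remarks: the lower bound $K\geq\theta_{0}$ on $\{|x-y|\leq\Delta\}$ that you use is the half of (K3)$'$ which is a standing assumption of this subsection, and the parenthetical ``except that $K$ is bounded'' in the statement signals precisely that the upper bound $K\leq k_{0}$ is not needed --- indeed your argument never uses it; and the denominator is strictly positive, since $\varepsilon u^{\varepsilon}(y)+a(y)=\int_{\mathbf{R}}K_{p}(y,s)e^{u^{\varepsilon}(s)-u^{\varepsilon}(y)}ds>0$, so dividing by $\overline{c}+\|a\|_{\infty}\geq\varepsilon u^{\varepsilon}(y)+a(y)$ is harmless. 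With $C_{0}=\theta_{0}e^{-|p|\delta}/(\overline{c}+\|a\|_{\infty})$, which depends only on $K$, $p$, $\delta$ and $f_{s}^{\prime}$ (through $a$ and $\overline{c}$), the claimed inequality follows exactly as you state, and your closing observation about how the lemma feeds into the chaining argument of Theorem \ref{thm5.2} matches the paper's use of it.
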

\begin{proof}
See\cite[Lemma 2.1]{C}.
\end{proof}

\begin{lemma}\label{lem5.2}
Assume that (K1), (K2) and (K6) hold (except that $K$ is bounded),
$u^{\varepsilon}$ satisfies \eqref{5.1}. For any $y\in\mathbf{R}$ and $r\geq\Delta$,
there exist $d(\geq\frac{\Delta}{6})$ depending on $\Delta$ and $C_{1}$ only depending on $K, p, \Delta$
and $f_{s}^{\prime}$, but independent of $r$ $\varepsilon, y$ such that
$$\int_{B_{r}(y)}\phi(s)ds\geq C_{1}\int_{B_{r+d}(y)}\phi(s)ds.$$
\end{lemma}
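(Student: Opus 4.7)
My plan is to start from the equation \eqref{5.1}, which after multiplication by $e^{u^{\varepsilon}(x)}$ reads
\[
\int_{\mathbf{R}}K_{p}(x,s)\phi(s)\,ds \;=\; \phi(x)\bigl(\varepsilon u^{\varepsilon}(x)+a(x)\bigr),
\]
integrate this identity over $x\in B_{r}(y)$, swap the order of integration by Fubini--Tonelli (all integrands are nonnegative), and then invoke (K3)$'$ together with a short geometric estimate to obtain the claimed comparison.

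After swapping, the identity becomes
\[
\int_{B_{r}(y)}\phi(x)\bigl(\varepsilon u^{\varepsilon}(x)+a(x)\bigr)\,dx
\;=\; \int_{\mathbf{R}}\phi(s)\!\left(\int_{B_{r}(y)}K_{p}(x,s)\,dx\right)ds.
\]
I bound the left side from above by $M\int_{B_{r}(y)}\phi$, where $M:=\overline{c}+\|a\|_{\infty}$ and $\overline{c}$ is the uniform upper bound on $\varepsilon u^{\varepsilon}+a$ provided by Theorem \ref{thm-exist-uniq}; note that $M$ depends only on $K$, $p$, and $f_{s}^{\prime}$, independent of $\varepsilon$ and $y$. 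On the right side I discard the contribution of $s\notin B_{r+d}(y)$ and, for $x\in B_{r}(y)\cap B_{\Delta}(s)$, estimate $K_{p}(x,s)\geq\theta_{0}e^{-|p|\Delta}$ using (K3)$'$, which yields
\[
\int_{B_{r}(y)}K_{p}(x,s)\,dx \;\geq\; \theta_{0}e^{-|p|\Delta}\,\bigl|B_{r}(y)\cap B_{\Delta}(s)\bigr|.
\]

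The main technical point is a short geometric claim: for $r\geq\Delta$ and $0<d\leq\Delta$, one has $\bigl|B_{r}(y)\cap B_{\Delta}(s)\bigr|\geq\Delta-d$ for every $s\in B_{r+d}(y)$. In one dimension this reduces to a case analysis on $|s-y|$: if $|s-y|\leq r-\Delta$ the intersection has length $2\Delta$; if $r-\Delta<|s-y|\leq r$ it has length at least $\Delta$; and if $r<|s-y|<r+d$ it equals $\Delta-(|s-y|-r)\geq\Delta-d$. The hypothesis $r\geq\Delta$ is used precisely to guarantee that the left endpoint of $B_{\Delta}(s)$ does not escape $B_{r}(y)$ on the far side.

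Combining these bounds gives $M\int_{B_{r}(y)}\phi \geq \theta_{0}e^{-|p|\Delta}(\Delta-d)\int_{B_{r+d}(y)}\phi$. Choosing $d=\Delta/6$ (so that simultaneously $d\geq\Delta/6$ and $d\leq\Delta$) and setting $C_{1}:=5\theta_{0}e^{-|p|\Delta}\Delta/(6M)$ completes the argument, with $C_{1}$ depending only on $K$, $p$, $\Delta$, $f_{s}^{\prime}$ and manifestly independent of $r$, $\varepsilon$, $y$. Aside from this geometric lemma, everything is bookkeeping built on the uniform bound from Theorem \ref{thm-exist-uniq} and the pointwise lower bound (K3)$'$.
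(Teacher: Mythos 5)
Your proof is correct, but it is worth noting that the paper does not actually prove this lemma: it simply defers to \cite[Lemma 2.5]{C}, so your argument is a self-contained substitute rather than a variant of an in-paper proof. Your route --- multiply \eqref{5.1} by $\phi(x)=e^{u^{\varepsilon}(x)}$ to get $\int_{\mathbf{R}}K_{p}(x,s)\phi(s)\,ds=(\varepsilon u^{\varepsilon}(x)+a(x))\phi(x)$, integrate over $x\in B_{r}(y)$, swap by Tonelli, and then combine the lower bound $K_{p}(x,s)\geq\theta_{0}e^{-|p|\Delta}$ on $\{|x-s|\leq\Delta\}$ with the one-dimensional overlap estimate $|B_{r}(y)\cap B_{\Delta}(s)|\geq\Delta-d$ for $s\in B_{r+d}(y)$, $r\geq\Delta$, $0<d\leq\Delta$ --- is sound, and the $\varepsilon$- and $y$-independence of $C_{1}$ is correctly traced to the uniform bound \eqref{subsuper}, which any bounded solution of \eqref{5.1} satisfies by Theorem \ref{thm5.1}. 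Two bookkeeping remarks: (i) the lower bound $K\geq\theta_{0}$ on $|x-y|\leq\Delta$ is not among the hypotheses you quote ((K1), (K2), (K6)) but is the standing assumption (K3)$^{\prime}$ of this subsection, and the parenthetical ``except that $K$ is bounded'' only drops the upper bound $k_{0}$ --- so you use exactly what is available, but you should say so explicitly; (ii) in bounding $\varepsilon u^{\varepsilon}+a\leq M$ you should note that $a$ is bounded because $b\in L^{\infty}$ (determined by $K$) and $f_{s}^{\prime}(\cdot,0)$ is bounded, so $M$ indeed depends only on $K$, $p$, $f_{s}^{\prime}$ as claimed. Compared with the covering-type argument one would import from \cite{C} (built on pointwise estimates like Lemma \ref{lem5.1}), your Fubini computation is more elementary, keeps the constants explicit (e.g.\ $C_{1}=5\Delta\theta_{0}e^{-|p|\Delta}/(6M)$ with $d=\Delta/6$), and verifies the stated requirements $d\geq\Delta/6$ and independence of $r$ directly; its only limitation is that the geometric step is written specifically for the one-dimensional setting, which is the setting of the paper anyway.
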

\begin{proof}
See \cite[Lemma 2.5]{C}.
\end{proof}

\begin{proof}[Proof of Theorem \ref{thm5.2}]
First from \eqref{5.1}, we have
\begin{equation*}
\begin{split}
(a(x)+\varepsilon u^{\varepsilon}(x))\phi(x)
&=\int_{\mathbf{R}}K_{p}(x,y)\phi(y)dy\\
&=\int_{B_{r(\varepsilon)}(x)}K_{p}(x,y)\phi(y)dy+\int_{B_{r(\varepsilon)}^{c}(x)}K_{p}(x,y)\phi(y)dy\\
&=\int_{B_{r(\varepsilon)}(x)}K_{p}(x,y)\phi(y)dy+\frac{\int_{B_{r(\varepsilon)}^{c}(x)}K_{p}(x,y)\phi(y)dy}
{\int_{\mathbf{R}}K_{p}(x,y)\phi(y)dy}\int_{\mathbf{R}}K_{p}(x,y)\phi(y)dy\\
&\leq\int_{B_{r(\varepsilon)}(x)}K_{p}(x,y)\phi(y)dy+\alpha_{p}\int_{\mathbf{R}}K_{p}(x,y)\phi(y)dy\\
&=\int_{B_{r(\varepsilon)}(x)}K_{p}(x,y)\phi(y)dy+\alpha_{p}(a(x)+\varepsilon u^{\varepsilon}(x))\phi(x).\\
\end{split}
\end{equation*}
Hence,
\begin{equation}\label{l5.2.1}
(a(x)+\varepsilon u^{\varepsilon}(x))\phi(x)
\leq\frac{\int_{B_{r(\varepsilon)}(x)}K_{p}(x,y)\phi(y)dy}{1-\alpha_{p}}
\leq\frac{k_{0}}{1-\alpha_{p}}e^{O(r(\varepsilon))}\int_{B_{r(\varepsilon)}(x)}\phi(y)dy.
\end{equation}
On the other hand, using Lemmas \ref{lem5.1} and \ref{lem5.2}, we have
\begin{equation*}
\begin{split}
\phi(y)\geq C_{0}\int_{B_{\Delta}(y)}\phi(s)ds
\geq C_{0}C_{1}\int_{B_{\Delta+d}(y)}\phi(s)ds\\
\geq C_{0}C_{1}^{2}\int_{B_{\Delta+2d}(y)}\phi(s)ds
\geq C_{0}C_{1}^{O(r(\varepsilon))}\int_{B_{r(\varepsilon)}(x)}\phi(s)ds.\\
\end{split}
\end{equation*}
Combining this with \eqref{l5.2.1}, we have
$$\phi(y)\geq\frac{(1-\alpha_{p})C_{0}}{k_{0}}(\frac{C_{1}}{e})^{O(r(\varepsilon))}(a(x)+\varepsilon u^{\varepsilon}(x))\phi(x).$$
\end{proof}

\begin{rem}\label{re5.1}
If $K(6)^{\prime}$ holds, then the conclusion of Theorem \ref{thm5.2} will be
$$\phi^{\varepsilon}(y)\geq C(a(x)+\varepsilon u^{\varepsilon}(x))\phi^{\varepsilon}(x),$$
where the constant $C$ depends on $r, K, p,$ and
$f_{s}^{\prime}$, but independent of $\varepsilon, x,$ and $y$.
\end{rem}

A useful corollary of Theorem \ref{thm5.2} is that

\begin{cor}\label{cor5.1}
Under the assumptions of Theorem \ref{thm5.2}. We further assume that $a(x)$ is locally Lipschitz continuous.
Then for any fixed $x_{0}\in\mathbf{R}, R>0$, we have
$\limsup\limits_{\varepsilon\to0^{+}}\varepsilon(u^{\varepsilon}(x_{0})-u^{\varepsilon}(\tilde{y}))\leq0$
uniformly with respect to $\tilde{y}\in B_{R}(0)$. In particular,
$\lim\limits_{\varepsilon\to0^{+}}\varepsilon(u^{\varepsilon}(x_{0})-u^{\varepsilon}({y}))=0$
for any fixed $x,y\in\mathbf{R}.$
\end{cor}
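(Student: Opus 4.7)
The plan is to apply Theorem~\ref{thm5.2} directly. Fix $R>0$ and choose $r > R + |x_{0}|$, so that $|x_{0} - \tilde y| < r$ for every $\tilde y \in B_{R}(0)$. Theorem~\ref{thm5.2} then yields
$$\phi^{\varepsilon}(\tilde y) \geq C_{0}\, C_{1}^{O(r(\varepsilon))}\bigl(a(x_{0}) + \varepsilon u^{\varepsilon}(x_{0})\bigr)\phi^{\varepsilon}(x_{0}),$$
where $\phi^{\varepsilon}=e^{u^{\varepsilon}}$ and $C_{0},C_{1}$ are positive constants independent of $\varepsilon,x_{0},\tilde y$. Taking natural logarithms and multiplying by $\varepsilon$ gives
$$\varepsilon\bigl(u^{\varepsilon}(x_{0}) - u^{\varepsilon}(\tilde y)\bigr) \leq -\varepsilon \log C_{0} - \varepsilon\, O(r(\varepsilon))\log C_{1} - \varepsilon \log\bigl(a(x_{0}) + \varepsilon u^{\varepsilon}(x_{0})\bigr).$$

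On the right-hand side, the first term tends to $0$ trivially. The second is controlled by (K7): since $\varepsilon r(\varepsilon)\to 0$ as $\varepsilon\to 0^{+}$, one has $|\varepsilon\, O(r(\varepsilon))\log C_{1}| = O(\varepsilon r(\varepsilon))|\log C_{1}|\to 0$. Both bounds are uniform in $\tilde y\in B_{R}(0)$, as the constants depend only on $r,K,p,f'_{s}$ and not on $\tilde y$.

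The delicate point is the third term $-\varepsilon\log\bigl(a(x_{0}) + \varepsilon u^{\varepsilon}(x_{0})\bigr)$. From \eqref{5.1}, $a(x_{0}) + \varepsilon u^{\varepsilon}(x_{0}) = \int_{\mathbf{R}} K_{p}(x_{0},y)\, e^{u^{\varepsilon}(y)-u^{\varepsilon}(x_{0})}\,dy > 0$, and this quantity is uniformly bounded above (by $\|a\|_{\infty}+\overline{c}$). The goal is to establish that it does not decay faster than $e^{-o(1)/\varepsilon}$, so that the $\varepsilon\log$ vanishes. Here is where the local Lipschitz continuity of $a$ enters: combining it with Theorem~\ref{thm5.2} applied on a $\Delta$-ball around $x_{0}$ (so that $e^{u^{\varepsilon}(y)-u^{\varepsilon}(x_{0})} \geq C_{0}C_{1}^{O(r(\varepsilon))}(a(x_{0})+\varepsilon u^{\varepsilon}(x_{0}))$), and using the fact that $a(y)$ cannot deviate much from $a(x_{0})$ on that ball, one builds a self-consistent lower bound that prevents exponential collapse. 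This is the main obstacle of the proof.

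Once $\limsup_{\varepsilon\to 0^{+}}\varepsilon(u^{\varepsilon}(x_{0})-u^{\varepsilon}(\tilde y))\leq 0$ is established uniformly on $B_{R}(0)$, the ``in particular'' statement follows from symmetry: for fixed $x,y\in\mathbf{R}$, apply the uniform estimate once with $(x_{0},\tilde y)=(x,y)$ and once with $(x_{0},\tilde y)=(y,x)$, choosing $R$ each time to contain both points. This yields both $\limsup_{\varepsilon\to 0^{+}}\varepsilon(u^{\varepsilon}(x)-u^{\varepsilon}(y))\leq 0$ and $\limsup_{\varepsilon\to 0^{+}}\varepsilon(u^{\varepsilon}(y)-u^{\varepsilon}(x))\leq 0$, from which $\lim_{\varepsilon\to 0^{+}}\varepsilon(u^{\varepsilon}(x)-u^{\varepsilon}(y)) = 0$ follows immediately.
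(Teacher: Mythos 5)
Your reduction via Theorem \ref{thm5.2} and the treatment of the first two error terms (including the use of $\varepsilon r(\varepsilon)\to 0$ from (K7)) match the paper, and your symmetry argument for the ``in particular'' statement is fine. But the heart of the corollary is precisely the term you defer: one must show $\liminf_{\varepsilon\to 0^{+}}\varepsilon\ln\bigl(a(x_{0})+\varepsilon u^{\varepsilon}(x_{0})\bigr)\geq 0$, i.e.\ that $a(x_{0})+\varepsilon u^{\varepsilon}(x_{0})$ cannot collapse exponentially fast, and your sketch for this step does not work. If you apply Theorem \ref{thm5.2} on a ball around $x_{0}$ to bound $e^{u^{\varepsilon}(y)-u^{\varepsilon}(x_{0})}$ from below by $C_{0}C_{1}^{O(r(\varepsilon))}\bigl(a(x_{0})+\varepsilon u^{\varepsilon}(x_{0})\bigr)$ and insert this into \eqref{5.1} at $x_{0}$, the unknown quantity $a(x_{0})+\varepsilon u^{\varepsilon}(x_{0})$ appears on both sides and cancels: you are left with an inequality between fixed constants, which yields no lower bound at all. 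The ``self-consistent'' bound you describe therefore produces nothing, and local Lipschitz continuity of $a$ enters nowhere in a usable way in your version.

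The paper's argument for this claim is genuinely different and is the missing idea. One evaluates \eqref{5.1} at a point $x_{0}$ where $a+\varepsilon u^{\varepsilon}$ attains its global infimum (or, if the infimum is not attained, at endpoints of expanding intervals $[x_{n},y_{n}]$ on which the minimum sits at an endpoint). At such a point the minimality gives $u^{\varepsilon}(y)-u^{\varepsilon}(x_{0})\geq \bigl(a(x_{0})-a(y)\bigr)/\varepsilon$ for the relevant $y$, so restricting the integral to a ball of radius $\varepsilon$ around $x_{0}$ and using the local Lipschitz bound $|a(y)-a(x_{0})|\leq L(x_{0})|y-x_{0}|$ makes the exponent bounded below by $-L(x_{0})$, independently of $\varepsilon$; the kernel lower bound from (K3)$^{\prime}$ then gives $a(x_{0})+\varepsilon u^{\varepsilon}(x_{0})\geq c\,\varepsilon$ with $c$ independent of $\varepsilon$, hence $\varepsilon\ln\bigl(a(x_{0})+\varepsilon u^{\varepsilon}(x_{0})\bigr)\to 0$, and the bound transfers to every fixed $x$ because $x_{0}$ is a (near-)minimizer of $a+\varepsilon u^{\varepsilon}$. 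Note that this is where the Lipschitz hypothesis and the $\varepsilon$-scaling of the ball are actually used, and that the non-attained-infimum case requires the separate endpoint argument; without these ingredients your proof is incomplete at its decisive step.
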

\begin{proof}
Claim: For any fixed $x\in\mathbf{R}$, $\liminf\limits_{\varepsilon\to0}\varepsilon\ln(a(x)+\varepsilon u^{\varepsilon}(x))
\geq0$.\\
Proof of claim: There are two cases we need consider:\\
Case 1: $(a(x)+\varepsilon u^{\varepsilon}(x))$ reaches its minimum at some point, say $x_{0}$. Then
$(a(x)+\varepsilon u^{\varepsilon}(x))\geq(a(x_{0})+\varepsilon u^{\varepsilon}(x_{0}))$, hence
\begin{equation*}
\begin{split}
a(x_{0})+\varepsilon u^{\varepsilon}(x_{0})
&=\int_{\mathbf{R}}K_{p}(x_{0},y)e^{u^{\varepsilon}(y)-u^{\varepsilon}(x_{0})}dy\\
&\geq \int_{\mathbf{R}}K_{p}(x_{0},y)\exp(\frac{a(y)-a(x_{0})}{\varepsilon})dy\\
&\geq \int_{B_{\varepsilon}(x_{0})}K_{p}(x_{0},y)\exp(\frac{a(y)-a(x_{0})}{y-x_{0}}\frac{y-x_{0}}{\varepsilon})dy\\
&\geq \int_{B_{\varepsilon}(x_{0})}K_{p}(x_{0},y)\exp(-L(x_{0})\frac{|y-x_{0}|}{\varepsilon})dy\\
&\geq \int_{B_{\varepsilon}(x_{0})}K_{p}(x_{0},y)\exp(-L(x_{0}))dy\\
&\geq2\varepsilon\theta_{0}e^{-|p|\varepsilon}e^{-L(x_{0})}.
\end{split}
\end{equation*}
Hence, $\liminf\limits_{\varepsilon\to0}\varepsilon\ln(a(x)+\varepsilon u^{\varepsilon}(x))\geq
\liminf\limits_{\varepsilon\to0}\varepsilon\ln2\varepsilon\theta_{0}e^{-|p|\varepsilon}e^{-L(x_{0})}=0.$\\
Case 2: The minimum can not be reached. Then there must be $[x_{n},y_{n}]\subset\mathbf{R}$ with
$x_{n}\to-\infty$ and $y_{n}\to+\infty$ such that either
$a(x_{n})+\varepsilon u^{\varepsilon}(x_{n})=
\min\limits_{x\in[x_{n},y_{n}]}a(x)+\varepsilon u^{\varepsilon}(x)\ \forall n$
or
$a(y_{n})+\varepsilon u^{\varepsilon}(y_{n})=
\min\limits_{x\in[x_{n},y_{n}]}a(x)+\varepsilon u^{\varepsilon}(x)\ \forall n.$

If
$a(x_{n})+\varepsilon u^{\varepsilon}(x_{n})=
\min\limits_{x\in[x_{n},y_{n}]}a(x)+\varepsilon u^{\varepsilon}(x)\ \forall n$, then
\begin{equation*}
\begin{split}
a(x_{n})+\varepsilon u^{\varepsilon}(x_{n})
&=\int_{\mathbf{R}}K_{p}(x_{n},y)e^{u^{\varepsilon}(y)-u^{\varepsilon}(x_{n})}dy\\
&\geq \int_{x_{n}}^{x_{n}+\varepsilon}K_{p}(x_{n},y)\exp(\frac{a(y)-a(x_{n})}{\varepsilon})dy\\
&\geq \int_{x_{n}}^{x_{n}+\varepsilon}K_{p}(x_{n},y)\exp(-L(x_{n})\frac{|y-x_{n}|}{\varepsilon})dy\\
&\geq\varepsilon\theta_{0}e^{-|p|\varepsilon}e^{-L(x_{n})}.
\end{split}
\end{equation*}
Note that $x\in[x_{n},y_{n}]$ for a sufficiently large $n$, hence $\liminf\limits_{\varepsilon\to0}\varepsilon\ln(a(x)+\varepsilon u^{\varepsilon}(x))\geq
\liminf\limits_{\varepsilon\to0}\varepsilon\ln2\varepsilon\theta_{0}e^{-|p|\varepsilon}e^{-L(x_{n})}=0.$
One can obtain a similar conclusion when $a(y_{n})+\varepsilon u^{\varepsilon}(y_{n})=
\min\limits_{x\in[x_{n},y_{n}]}a(x)+\varepsilon u^{\varepsilon}(x)\ \forall n.$ Thus the claim is proved .

By Theorem \ref{thm5.2},
\begin{equation}\label{t5.2.1}
e^{u^{\varepsilon}(\tilde{y})-u^{\varepsilon}(x_{0})}\geq C_{0}C_{1}^{O(r(\varepsilon))}(a(x_{0})+\varepsilon u^{\varepsilon}(x_{0}))\ \forall x,\tilde{y}\in B_{R}(0).
\end{equation}
Therefore
\begin{equation*}
\begin{split}
\limsup\limits_{\varepsilon\to0^{+}}\varepsilon(u^{\varepsilon}(x_{0})-u^{\varepsilon}(\tilde{y}))
&\leq\limsup\limits_{\varepsilon\to0^{+}}\big(-\varepsilon\ln C_{0}-\varepsilon O(r(\varepsilon))\ln C_{1}-\varepsilon\ln(a(x_{0})+\varepsilon u^{\varepsilon}(x_{0}))\big)\\
&\leq-\liminf\limits_{\varepsilon\to0^{+}}\varepsilon\ln(a(x_{0})+\varepsilon u^{\varepsilon}(x_{0}))
\leq0
\end{split}
\end{equation*}
uniformly with respect to $\tilde{y}\in B_{R}(0)$.
\end{proof}

For convenience, we set two situations:
$$(S1): (K1), (K3)^{\prime}, (K4), (K6), (K7)^{\prime}, (K8), (K9)\ \text{hold};$$
$$(S2): (K1), (K3)^{\prime}, (K4), (K7), (K9)\ \text{hold}.$$

\begin{thm}\label{thm5.3}
Assume that $a(x)$ is locally Lipschitz continuous,
$u^{\varepsilon}\in C(\mathbf{R})\cap L^{\infty}(\mathbf{R})$
be the solution of \eqref{5.1}. If we further assume one of the following conditions:\\
(i) Under situation (S1) and
\begin{equation}\label{t5.3.0}
\kappa:=\inf\limits_{p}\inf\limits_{x}\int_{\mathbf{R}}K(x,y)e^{p(y-x)}dy-\sup\limits_{x}a(x)+\inf\limits_{x}a(x)>0;
\end{equation}
(ii) Under situation (S2) and $K(x,y)=K(x-y)$.\\
Then $\varepsilon u^{\varepsilon}(x)$ converges to some constant as $\varepsilon\to0$
uniformly with respect to $x\in\mathbf{R}$.
\end{thm}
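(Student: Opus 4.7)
The plan is a three-step bootstrap: local oscillation control, almost-periodic transfer, and a triangle-inequality assembly. The $L^{\infty}$ bounds $\underline{c} \leq \varepsilon u^{\varepsilon}(x) \leq \overline{c}$ from \eqref{subsuper} are fixed throughout. As a first step I upgrade the inner claim in the proof of Corollary~\ref{cor5.1}: that proof gives $\liminf_{\varepsilon \to 0} \varepsilon \ln(a(x_0) + \varepsilon u^{\varepsilon}(x_0)) \geq 0$ at an approximate minimizer $x_0$ of $x \mapsto a(x) + \varepsilon u^{\varepsilon}(x)$, and since the minimum bounds every other value from below, this strengthens to $\liminf_{\varepsilon \to 0} \inf_{x \in \mathbf{R}} \varepsilon \ln(a(x) + \varepsilon u^{\varepsilon}(x)) \geq 0$. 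Plugging this into Theorem~\ref{thm5.2} (whose constants depend only on the ball radius), using $\varepsilon r(\varepsilon) \to 0$ from (K7), and symmetrizing the roles of $x$ and $y$ yields, for every $R > 0$,
\[
\mathrm{osc}_{B_R(0)}\,\varepsilon u^{\varepsilon} := \sup_{x,y \in B_R(0)} |\varepsilon u^{\varepsilon}(x) - \varepsilon u^{\varepsilon}(y)| \longrightarrow 0 \quad \text{as } \varepsilon \to 0.
\]

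The second step transfers this local flatness across $\mathbf{R}$ via almost periods. The key algebraic identity is that $w + c$ solves \eqref{5.1} with $(K, a)$ replaced by $(K, a + \varepsilon c)$, since the constant cancels in $e^{(w(y)+c) - (w(x)+c)}$. Combined with Theorem~\ref{thm5.1}, this gives the sharp stability $\|\varepsilon u^{\varepsilon, K, a_1} - \varepsilon u^{\varepsilon, K, a_2}\|_{\infty} \leq \|a_1 - a_2\|_{\infty}$ when the kernel is held fixed. Fix $\eta > 0$: (K8)--(K9) in situation (i), and the almost periodicity of $a$ combined with the translation invariance of $K$ in situation (ii), furnish a relatively dense set $T_{\eta} \subset \mathbf{R}$ of approximate periods $\tau$, with inclusion length $L_{\eta}$, satisfying $\|a(\cdot+\tau)-a\|_{\infty} \leq \eta$ (and, in (i), the analogous (K8)-bound on the kernel). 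For $\tau \in T_{\eta}$ the shift $v(x) := u^{\varepsilon}(x+\tau)$ solves \eqref{5.1} with $\tau$-shifted coefficients; in case (ii) the $a$-comparison directly yields $|\varepsilon u^{\varepsilon}(x+\tau) - \varepsilon u^{\varepsilon}(x)| \leq \eta$. In case (i) the kernel is also perturbed, and the spectral-gap condition $\kappa > 0$ from \eqref{t5.3.0}, combined with the Harnack inequality, is used to bound $e^{u^{\varepsilon}(y) - u^{\varepsilon}(x)}$ uniformly in $\varepsilon$, so that the kernel perturbation contributes only an order-$\eta$ term; combining with the $a$-comparison gives $|\varepsilon u^{\varepsilon}(x+\tau) - \varepsilon u^{\varepsilon}(x)| \leq C\eta$.

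The third step is the assembly. For any $x \in \mathbf{R}$ and $\eta > 0$, choose $\tau \in T_{\eta}$ with $x+\tau \in B_{L_{\eta}}(0)$. The triangle inequality combined with the previous two steps yields
\[
|\varepsilon u^{\varepsilon}(x) - \varepsilon u^{\varepsilon}(0)| \leq |\varepsilon u^{\varepsilon}(x) - \varepsilon u^{\varepsilon}(x+\tau)| + |\varepsilon u^{\varepsilon}(x+\tau) - \varepsilon u^{\varepsilon}(0)| \leq C\eta + \mathrm{osc}_{B_{L_{\eta}}(0)}\,\varepsilon u^{\varepsilon}.
\]
Passing to $\limsup_{\varepsilon \to 0}$ and then $\eta \to 0$ gives $\sup_x |\varepsilon u^{\varepsilon}(x) - \varepsilon u^{\varepsilon}(0)| \to 0$. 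Since $\{\varepsilon u^{\varepsilon}(0)\}$ is bounded, any subsequence admits a further subsequence with $\varepsilon u^{\varepsilon}(0) \to \alpha$; the uniform oscillation estimate upgrades this to uniform convergence of $\varepsilon u^{\varepsilon}$ to $\alpha$ on $\mathbf{R}$. Uniqueness of $\alpha$ across subsequences follows from the eigenvalue identity $L_p \phi^{\varepsilon} = \varepsilon u^{\varepsilon} \phi^{\varepsilon}$ with $\phi^{\varepsilon} = e^{u^{\varepsilon}}$, which in the limit identifies $\alpha$ with a generalized principal eigenvalue of $L_p$ on $\mathbf{R}$, unique under the standing hypotheses.

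The main obstacle is the kernel perturbation in Step 2 under situation (S1). Without $\kappa > 0$, the ratio $e^{u^{\varepsilon}(y) - u^{\varepsilon}(x)}$ could grow like $e^{(\overline{c} - \underline{c})/\varepsilon}$, so an $\eta$-perturbation of $K$ in the (K8)-norm would blow up into an order-$1$ perturbation of the equation and defeat the comparison. Evaluating $a(x) + \varepsilon u^{\varepsilon}(x) = \int K_p(x,y) e^{u^{\varepsilon}(y) - u^{\varepsilon}(x)} dy$ at the minimum $x_*$ of $u^{\varepsilon}$ gives $a(x_*) + \varepsilon u^{\varepsilon}(x_*) \geq \int K_p(x_*, y) dy$, and the definition of $\kappa$ then forces $\inf_x [a(x) + \varepsilon u^{\varepsilon}(x)] \geq \kappa > 0$; this uniform positivity, fed into the Harnack inequality, produces the needed uniform upper bound on $e^{u^{\varepsilon}(y) - u^{\varepsilon}(x)}$ and closes the perturbation argument.
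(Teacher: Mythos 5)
Your proposal is correct in substance and rests on exactly the same three pillars as the paper's proof: the comparison principle of Theorem \ref{thm5.1} together with the constant-shift trick (a constant added to a solution only perturbs the equation through $\varepsilon\cdot\text{const}$, which is how the paper builds $w=u_n^{\varepsilon}-\eta_{m,n}/\varepsilon$), the Harnack inequality (Theorem \ref{thm5.2}/Remark \ref{re5.1}) combined with \eqref{t5.3.0} to bound $e^{u^{\varepsilon}(y)-u^{\varepsilon}(x)}\leq 1/(C\kappa)$ so that the kernel perturbation in case (S1) stays of order $\eta$, and the Corollary \ref{cor5.1}-type local estimate coming from the positivity claim $\liminf_{\varepsilon\to0}\varepsilon\ln(a+\varepsilon u^{\varepsilon})\geq0$. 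Where you differ is in the packaging: the paper proves the key claim ($\varepsilon(u^{\varepsilon}-u^{\varepsilon}(0))\to0$ uniformly) by contradiction, extracting Bochner subsequences of translates as in (K8)--(K9) and invoking Corollary \ref{cor5.1} at a single fixed point $x_{n_0}$, whereas you argue directly with Bohr-type relatively dense sets $T_{\eta}$ of common $\eta$-almost periods and a two-sided oscillation bound on balls; this is legitimate, but you should note explicitly that (K8)--(K9) are stated in Bochner form, so you are invoking the standard equivalence with the Bohr property (which works here because the $\sup_{x}$ in the (K8)-norm makes translate distances independent of the base point) and the standard fact that finitely many almost periodic objects admit common $\eta$-almost periods; your strengthening of the claim to $\inf_x$ is obtained by the same minimum-point argument as in Corollary \ref{cor5.1} and inherits the same implicit use of the local Lipschitz constant there, so it adds nothing new to justify. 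The second genuine divergence is the uniqueness of the limit across sequences: the paper redoes a doubling-of-variables comparison (as in Theorem \ref{thm5.1}) on $u^{\varepsilon}$ versus $u^{\varepsilon'}$, while you identify any subsequential limit with $\underline{\lambda}_1(p,-\infty)=\overline{\lambda}_1(p,-\infty)$ by testing with $\phi^{\varepsilon}=e^{u^{\varepsilon}}$ and Proposition \ref{prop2.1}; this is cleaner and not circular (it anticipates Step 1 of Theorem \ref{thm5.4} rather than using it), but it does require $\phi^{\varepsilon}\in\mathcal{A}$, i.e.\ uniform continuity of $u^{\varepsilon}$, an admissibility point the paper itself asserts without proof in Theorem \ref{thm5.4} and which you should at least flag (it can be checked from \eqref{5.1} using (K6) and uniform continuity of $a$). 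With these two points made explicit, your argument goes through and buys a quantitative, contradiction-free proof; the paper's route avoids any appeal to Bohr almost periods at the cost of two separate contradiction arguments.
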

\begin{proof}
Let $\hat{u}^{\varepsilon}(x):=u^{\varepsilon}(x)-u^{\varepsilon}(0)$. Then $\hat{u}^{\varepsilon}(x)$
satisfies
$$\varepsilon \hat{u}^{\varepsilon}(x)-
\int_{\mathbf{R}}K_{p}(x,y)e^{\hat{u}^{\varepsilon}(y)-\hat{u}^{\varepsilon}(x)}dy+a(x)+
\varepsilon u^{\varepsilon}(0)=0$$
Claim: $\varepsilon\hat{u}^{\varepsilon}(x)\to0$ as $\varepsilon\to0$ uniformly with respect to $x$.\\
Proof of claim:
Assume by contradiction that $\exists\ \varepsilon_{n}\to0,\ x_{n},\ \theta>0$ such that
$|\varepsilon_{n}\hat{u}^{\varepsilon_{n}}(x_{n})|\geq2\theta$. We will prove the case where
$\varepsilon_{n}\hat{u}^{\varepsilon_{n}}(x_{n})\geq2\theta$, and one can prove the case where
$\varepsilon_{n}\hat{u}^{\varepsilon_{n}}(x_{n})\leq-2\theta$ similarly.
Without loss of generality, we may assume that $a(x+x_{n})$ converges uniformly as $n\to+\infty$
since $a(x)$ is almost periodic and that \eqref{5.2.1} holds in $(S1)$.

Set $u_{n}^{\varepsilon}(x):=\hat{u}^{\varepsilon}(x+x_{n})$.
Then $u_{n}^{\varepsilon}(x),\ u_{m}^{\varepsilon}(x)$ satisfy
\begin{equation}\label{t5.3.1}
\varepsilon u_{n}^{\varepsilon}(x)-
\int_{\mathbf{R}}K_{p}(x+x_{n},y+x_{n})e^{u_{n}^{\varepsilon}(y)-u_{n}^{\varepsilon}(x)}dy
+a(x+x_{n})+\varepsilon u^{\varepsilon}(0)=0,\ \text{and}
\end{equation}
\begin{equation}\label{t5.3.2}
\varepsilon u_{m}^{\varepsilon}(x)-
\int_{\mathbf{R}}K_{p}(x+x_{m},y+x_{m})e^{u_{m}^{\varepsilon}(y)-u_{m}^{\varepsilon}(x)}dy+a(x+x_{m})+
\varepsilon u^{\varepsilon}(0)=0.
\end{equation}
Set $w(x):=u_{n}^{\varepsilon}(x)-\frac{\eta_{m,n}}{\varepsilon}$, where
$$\eta_{m,n}:=\|a(\cdot+x_{n})-a(\cdot+x_{m})\|_{\infty}+
\|\int_{\mathbf{R}}\big( K_{p}(\cdot+x_{n},y+x_{n})-K_{p}(\cdot+x_{m},y+x_{m})\big)e^{u_{n}^{\varepsilon}(y)-u_{n}^{\varepsilon}(x)}dy\|_{\infty}.$$
Then $w(x)$ satisfies
$$\varepsilon w(x)-
\int_{\mathbf{R}}K_{p}(x+x_{m},y+x_{m})e^{w(y)-w(x)}dy+a(x+x_{m})+
\varepsilon u^{\varepsilon}(0)\leq0$$
since $w(y)-w(x)=u_{n}^{\varepsilon}(y)-u_{n}^{\varepsilon}(x)$.
Hence by Theorem \ref{thm5.1}, we have $w(x)\leq u_{m}^{\varepsilon}(x)$ for any $x\in\mathbf{R}$, i.e.,
$$\varepsilon\hat{u}^{\varepsilon}(x+x_{n})\leq\varepsilon\hat{u}^{\varepsilon}(x+x_{m})+\eta_{m,n}.$$
Setting $\varepsilon=\varepsilon_{n},\ x=0$, we have
\begin{equation}\label{t5.3.3}
2\theta\leq\varepsilon_{n}\hat{u}^{\varepsilon_{n}}(x_{n})\leq\varepsilon_{n}\hat{u}^{\varepsilon_{n}}(x_{m})+\eta_{m,n}
\leq\varepsilon_{n}(u^{\varepsilon_{n}}(x_{m})-u^{\varepsilon_{n}}(0))+\eta_{m,n}
\end{equation}
for any $n,m\in\mathbf{N}$.
On the other hand, from Remark \ref{re5.1}, we have
\begin{equation*}
\begin{split}
e^{\tilde{u}(y)-\tilde{u}(x)}
&\leq\frac{1}{C(a(y)+\varepsilon u^{\varepsilon}(y))}\\
&\leq\frac{1}{C(\inf\limits_{x}a(x)+\inf\limits_{x}\{\int_{\mathbf{R}}K(x,y)e^{p(y-x)}dy-a(x)\})}\\
&\leq\frac{1}{C(\inf\limits_{x}\int_{\mathbf{R}}K(x,y)e^{p(y-x)}dy-\sup\limits_{x}a(x)+\inf\limits_{x}a(x))}\\
&\leq\frac{1}{\kappa C}
\end{split}
\end{equation*}
in $(S1)$. Hence $\eta_{m,n}\leq\|a(\cdot+x_{n})-a(\cdot+x_{m})\|_{\infty}+\frac{1}{\kappa C}
\sup\limits_{x\in\mathbf{R}}\int_{B_{r_{0}(x)}}
|\big( K_{p}(x+x_{n},y+x_{n})-K_{p}(x+x_{m},y+x_{m})\big)|dy$ in $(S1)$ and
$\eta_{m,n}\leq\|a(\cdot+x_{n})-a(\cdot+x_{m})\|_{\infty}$ in $(S2)$.
Therefore $\eta_{m,n}\to0$ as $m,n\to\infty$ uniformly with respect to $\varepsilon$ by the choice of $x_{n}$.
One can find $n_{0}$ such that $\eta_{m,n}<\theta$ $\forall m,n\geq n_{0}$. In particular,
$\eta_{n_{0},n}<\theta$ $\forall n\geq n_{0}$. Hence from \eqref{t5.3.3}, we have
$$2\theta\leq\varepsilon_{n}(u^{\varepsilon_{n}}(x_{n_{0}})-u^{\varepsilon_{n}}(0))+\eta_{n_{0},n}
\leq\varepsilon_{n}(u^{\varepsilon_{n}}(x_{n_{0}})-u^{\varepsilon_{n}}(0))+\theta\to\theta$$
as $n\to\infty$ by Corollary \ref{cor5.1}, which is a contradiction!
Thus we complete the proof of claim.

The claim means that for any sequence $\{\varepsilon_{n}\}$ there exists a subsequence still denoted by $\{\varepsilon_{n}\}$ such that
$\varepsilon_{n}u^{\varepsilon_{n}}\rightrightarrows
\lim\limits_{n\to+\infty}\varepsilon_{n}u^{\varepsilon_{n}}(0)$.
Then we still need to show that for any sequence $\varepsilon_{n}$ trending to $0$,
$\varepsilon_{n}u^{\varepsilon_{n}}$ converges to the same constant as $n\to+\infty$.
If not, then exist $\{\varepsilon_{n}\}$ and $\{\varepsilon^{\prime}_{n}\}$ such that
$\varepsilon_{n}u^{\varepsilon_{n}}\rightrightarrows a,\ \text{and}\
 \varepsilon^{\prime}_{n}u^{\varepsilon^{\prime}_{n}}\rightrightarrows b$
as $n\to+\infty$. Without loss of generality, we may assume $a>b$. Then we choose
$\varepsilon\in\{\varepsilon_{n}\},\ \varepsilon^{\prime}\in\{\varepsilon^{\prime}_{n}\}$ such that
$\|\varepsilon u^{\varepsilon}-a\|_{{\infty}}<\frac{a-b}{4},\
 \|\varepsilon^{\prime}u^{\varepsilon^{\prime}}-b\|_{{\infty}}<\frac{a-b}{4}$. Hence
$\varepsilon u^{\varepsilon}(x)-\varepsilon^{\prime}u^{\varepsilon^{\prime}}(y)>\frac{a-b}{2}$
$\forall\ x,y\in\mathbf{R}$. Let
$$\Phi(x,y)=u^{\varepsilon}(x)-u^{\varepsilon^{\prime}}(y)-\alpha|x-y|-\mu(|x|+|y|).$$
Then by the same argument as Theorem \ref{thm5.1},
$\Phi$ reaches its maximum at some point, say $(\xi,\eta)$,
over $\mathbf{R}^{2}$. $(\xi,\eta)$ depends on $\alpha\ \text{and}\ \mu$,
and $|\xi-\eta|\to0$ as $\alpha\to+\infty$ uniformly with respect to $\mu$. One can finally find that
$$0<\frac{a-b}{2}
\leq \int_{\mathbf{R}}K_{p}(\xi,y)e^{u^{\varepsilon}(y)-u^{\varepsilon}(\xi)}dy-a(\xi)
-\int_{\mathbf{R}}K_{p}(\eta,y)e^{u^{\varepsilon^{\prime}}(y)-u^{\varepsilon^{\prime}}(\eta)}dy+a(\eta)\to0$$
as $\mu\to0$ and $\alpha\to+\infty$ (after passing a subsequence), which is a contradiction! Thus the proof is complete.
\end{proof}

Denote $\lambda_{0}:=\lim\limits_{\varepsilon\to0}\varepsilon u^{\varepsilon}$, now we can prove our
main result of this section.

\begin{thm}\label{thm5.4}
Under the assumptions in Theorem \ref{thm5.3} (except that $a$ is locally Lipschitz continuous). If
$K\varphi$ is uniformly continuous for any $\varphi\in L^{\infty}(\mathbf{R})\cap C(\mathbf{R})$,
then we have
$$\underline{\omega}=\overline{\omega}.$$
\end{thm}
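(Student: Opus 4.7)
The plan is to deduce $\underline{\omega}=\overline{\omega}$ by showing that for every $p\in\mathbf{R}$ one actually has $\underline{H}(p)=\overline{H}(p)$, which by definition \eqref{2.3} gives the claim. Fix $p$ and let $u^{\varepsilon}$ be the solution of \eqref{5.1} provided by Theorem~\ref{thm-exist-uniq}. Setting $\phi^{\varepsilon}:=e^{u^{\varepsilon}}$, a direct rearrangement of \eqref{5.1} shows
\[
L_{p}\phi^{\varepsilon}(x)=\bigl(K_{p}\phi^{\varepsilon}\bigr)(x)-a(x)\phi^{\varepsilon}(x)=\varepsilon u^{\varepsilon}(x)\,\phi^{\varepsilon}(x),\qquad x\in\mathbf{R}.
\]
By Theorem~\ref{thm5.3}, $\varepsilon u^{\varepsilon}\to\lambda_{0}$ uniformly for some constant $\lambda_{0}=\lambda_{0}(p)$. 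So the plan is: extract a limit $\psi_{p}\in\mathcal{A}$ of the normalized sequence $\psi^{\varepsilon}:=\phi^{\varepsilon}/\phi^{\varepsilon}(0)$ and pass $\varepsilon\to 0$ to obtain the true eigen-identity $L_{p}\psi_{p}=\lambda_{0}\psi_{p}$ on all of $\mathbf{R}$. Once this is done, Corollary~\ref{cor2.1} instantly forces $\underline{\lambda_{1}}(p,R)=\overline{\lambda_{1}}(p,R)=\lambda_{0}$ for every $R$, whence $\underline{H}(p)=\overline{H}(p)=\lambda_{0}$ after letting $R\to+\infty$.

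The compactness of $\{\psi^{\varepsilon}\}$ rests on two ingredients. First, the Harnack-type estimate of Theorem~\ref{thm5.2}/Remark~\ref{re5.1} gives $\phi^{\varepsilon}(y)\geq C(a(x)+\varepsilon u^{\varepsilon}(x))\,\phi^{\varepsilon}(x)$ for $|x-y|$ on a fixed scale. Since $\varepsilon u^{\varepsilon}\to\lambda_{0}$ uniformly and $a$ is almost periodic (hence uniformly continuous and bounded), one has $a+\varepsilon u^{\varepsilon}\in[c,C]$ with $0<c\leq C$ for all small $\varepsilon$; iterating the Harnack bound over a chain of points as in Lemma~\ref{lem5.2} and using $\psi^{\varepsilon}(0)=1$ produces uniform positive upper and lower bounds $c_{1}\leq\psi^{\varepsilon}\leq c_{2}$ on the whole line. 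Second, to get equicontinuity I invoke the new hypothesis that $K\varphi$ is uniformly continuous whenever $\varphi\in L^{\infty}\cap C$: applied to $\varphi^{\varepsilon}:=e^{p\cdot}\psi^{\varepsilon}$, it makes $K_{p}\psi^{\varepsilon}=e^{-p\cdot}K(e^{p\cdot}\psi^{\varepsilon})$ uniformly continuous with a modulus uniform in $\varepsilon$. The identity $K_{p}\psi^{\varepsilon}=(a+\varepsilon u^{\varepsilon})\psi^{\varepsilon}$, together with the uniform continuity and strict positivity of $a+\varepsilon u^{\varepsilon}$, then transfers uniform continuity back to $\psi^{\varepsilon}$ itself with a common modulus.

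Arzel\`a--Ascoli now supplies a subsequence $\psi^{\varepsilon_{n}}\to\psi_{p}$ locally uniformly, and the uniform bounds plus the common modulus of continuity give $\psi_{p}\in\mathcal{A}$. Passing to the limit in $K_{p}\psi^{\varepsilon_{n}}=(a+\varepsilon_{n}u^{\varepsilon_{n}})\psi^{\varepsilon_{n}}$ is legitimate: the tail of the $K_{p}$-integral is controlled uniformly in $\varepsilon$ by the uniform integrability assumption (K2), and on bounded sets we have locally uniform convergence of the integrand together with a dominating bound. The limit is $L_{p}\psi_{p}=\lambda_{0}\psi_{p}$ on $\mathbf{R}$, yielding $\underline{H}(p)=\overline{H}(p)$ for each $p$ and hence $\underline{\omega}=\overline{\omega}$.

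The main obstacle I expect is the equicontinuity step: Harnack delivers ratio control only on a fixed scale, so the argument that $\psi^{\varepsilon}$ is globally uniformly bounded and equicontinuous is the crux, and it genuinely needs both the new assumption "$K\varphi$ uniformly continuous" and the uniform positivity of $a+\varepsilon u^{\varepsilon}$ (which is where Theorem~\ref{thm5.3}'s convergence $\varepsilon u^{\varepsilon}\to\lambda_{0}$ with $\lambda_{0}$ finite is essential, together with (K4) / \eqref{t5.3.0} to prevent degeneration). Beyond that step the remainder of the argument is a routine limit passage and an appeal to Corollary~\ref{cor2.1}.
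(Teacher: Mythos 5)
Your overall strategy differs from the paper's, and it has two genuine gaps. The paper never passes to a limiting exact eigenfunction: since $L_{p}\phi^{\varepsilon}=\varepsilon u^{\varepsilon}\phi^{\varepsilon}$ with $\phi^{\varepsilon}=e^{u^{\varepsilon}}\in\mathcal{A}$ for each fixed $\varepsilon$ (here boundedness of $u^{\varepsilon}$ and the hypothesis on $K\varphi$ give membership in $\mathcal{A}$), and $\|\varepsilon u^{\varepsilon}-\lambda_{0}\|_{\infty}\leq\kappa$ for small $\varepsilon$, the function $\phi^{\varepsilon}$ is itself an admissible test function satisfying $L_{p}\phi^{\varepsilon}\geq(\lambda_{0}-\kappa)\phi^{\varepsilon}$ and $L_{p}\phi^{\varepsilon}\leq(\lambda_{0}+\kappa)\phi^{\varepsilon}$ on all of $\mathbf{R}$, so directly from Definition \ref{def2.1} one gets $\lambda_{0}-\kappa\leq\underline{\lambda}_{1}(p,-\infty)\leq\overline{\lambda}_{1}(p,-\infty)\leq\lambda_{0}+\kappa$, and $\kappa\to0$ finishes the Lipschitz case with no compactness argument at all. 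Your route instead hinges on extracting a limit $\psi_{p}\in\mathcal{A}$ of $\psi^{\varepsilon}=\phi^{\varepsilon}/\phi^{\varepsilon}(0)$, and the crucial claim that iterated Harnack gives uniform global bounds $c_{1}\leq\psi^{\varepsilon}\leq c_{2}$ on the whole line is not justified: Remark \ref{re5.1} controls the ratio $\phi^{\varepsilon}(y)/\phi^{\varepsilon}(x)$ only on a fixed scale, each link of a Harnack chain costs a fixed multiplicative constant (typically less than $1$), so chaining yields bounds that deteriorate exponentially in $|x|$, uniformly in $\varepsilon$ only on compact sets. Theorem \ref{thm5.3} gives $\varepsilon\,(u^{\varepsilon}(x)-u^{\varepsilon}(0))\to0$, which does not bound $u^{\varepsilon}(x)-u^{\varepsilon}(0)$ itself; hence the limit $\psi_{p}$ may fail to satisfy $0<\inf\psi_{p}\leq\sup\psi_{p}<\infty$, and Corollary \ref{cor2.1} (which requires a test function in $\mathcal{A}$) cannot be invoked. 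This is precisely the difficulty the paper's approximate-eigenfunction device is designed to avoid. (A further soft spot: the hypothesis that $K\varphi$ is uniformly continuous is a statement per function, so it does not by itself give a modulus of continuity uniform in $\varepsilon$ for the family $\psi^{\varepsilon}$, and $e^{p\cdot}\psi^{\varepsilon}$ is not in $L^{\infty}$ for $p\neq0$.)

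The second gap is that you invoke Theorem \ref{thm5.3} for the given $a$, but that theorem assumes $a$ locally Lipschitz, and Theorem \ref{thm5.4} explicitly drops this assumption. The paper's proof has a second step you are missing: mollify $a$ to $a_{n}=\rho_{n}\ast a$ (Lipschitz, and $a_{n}\to a$ uniformly by almost periodicity), apply the Lipschitz case to $a_{n}$, and then transfer the equality $\underline{\lambda}_{1}(p,-\infty,a_{n})=\overline{\lambda}_{1}(p,-\infty,a_{n})$ to $a$ via the stability estimate of Proposition \ref{prop2.2}, $|\lambda_{1}(p,\cdot,a)-\lambda_{1}(p,\cdot,a_{n})|\leq\|a-a_{n}\|_{\infty}$. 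Without this approximation step your argument only covers the Lipschitz case, which is not the statement being proved.
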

\begin{proof}
Step 1: We prove the theorem when $a$ is locally Lipschitz continuous. Let $\phi(x)=\mathrm{e}^{u^{\varepsilon}(x)}$, where $u^{\varepsilon}$ is a solution of \eqref{5.1}.
Obviously, $\phi\in\mathcal{A}$, and
$(L_{p}\phi)(x)=\varepsilon u^{\varepsilon}(x)\phi(x)$.
Moreover, for any $\kappa>0$ small, there exists $\varepsilon_{0}$ such that
$\|\varepsilon u^{\varepsilon}-\lambda_{0}\|_{{\infty}}\leq\kappa$ $\forall\varepsilon\leq\varepsilon_{0}$. Then from the definition and
the monotonicity of $\underline{\lambda}_{1},\overline{\lambda}_{1}$,
one can take $\phi$ as a test function to obtain
$$\lambda_{0}-\kappa\leq\underline{{\lambda}_{1}}(p,-\infty)\leq
\underline{{\lambda}_{1}}(p,R)\leq\overline{{\lambda}_{1}}(p,R)
\leq\overline{{\lambda}_{1}}(p,-\infty)\leq\lambda_{0}+\kappa$$
for $\kappa>0,\ R\in\mathbf{R},\ p\in\mathbf{R}$. Setting $\kappa\to0$, we have
$$\lambda_{0}=\underline{{\lambda}_{1}}(p,-\infty)=\underline{{\lambda}_{1}}(p,n)
=\overline{{\lambda}_{1}}(p,n)=\overline{{\lambda}_{1}}(p,-\infty).$$

Step 2: Now consider general $a(x)$. Let the standard mollifier
$$\rho(x)=
\left\{
   \begin{aligned}
 C\exp(\frac{1}{|x|^{2}-1}),\ & \ \ \ |x|<1,\\
               0,\ & \ \ \ |x|\geq1,\\
   \end{aligned}
   \right.$$
for some constant $C$ such that $\int_{\mathbf{R}}\rho(y)dy=1$. Denote $\rho_{n}(\cdot)=n\rho(n\cdot).$ Then one can easily check that
$a_{n}(x):=\int_{\mathbf{R}}\rho_{n}(x-y)a(y)$ is Lipschitz continuous for any $n\in\mathbf{N}$
since $\sup\limits_{x\in\mathbf{R}}|a^{\prime}_{n}(x)|<\infty$ and there exists a subsequence still
denoted by $n$ such that $a_{n}(\cdot)\to a(\cdot)$ in $C(\mathbf{R})$ since $a$ is almost priodic.
Then, by Step 1,
$\underline{{\lambda}_{1}}(p,-\infty,a_{n})=\overline{{\lambda}_{1}}(p,-\infty,a_{n}).$
Combining this with Propsition \ref{prop2.2}, one can find that:
$$\underline{{\lambda}_{1}}(p,-\infty,a_{n})-\|a_{n}-a\|_{\infty}\leq
\underline{{\lambda}_{1}}(p,-\infty,a)\leq\overline{{\lambda}_{1}}(p,-\infty,a)
\leq\overline{{\lambda}_{1}}(p,-\infty,a_{n})+\|a_{n}-a\|_{\infty}$$
for any $n\in\mathbf{N}$. Hence
$$\lim\limits_{n\to\infty}\underline{{\lambda}_{1}}(p,-\infty,a_{n})=
\underline{{\lambda}_{1}}(p,-\infty,a)=\overline{{\lambda}_{1}}(p,-\infty,a)
=\lim\limits_{n\to\infty}\overline{{\lambda}_{1}}(p,-\infty,a_{n}).$$
\end{proof}

If, furthermore, $K(x,y)=K(y,x)$, then we can prove that the speed in the positive direction
equals to the speed in the negative direction, i.e.,
$\underline{\omega}^{-}=\overline{\omega}^{-}=\underline{\omega}=\overline{\omega}$, where
$\underline{\omega}^{-},\ \overline{\omega}^{-}$ were given in Remark \ref{re2.1}. In fact, we have the following theorem:
\begin{thm}\label{thm5.5}
Under the assumptions in Theorem \ref{thm5.3}. If $K(x,y)=K(y,x)$, then
$$
\underline{{\lambda}_{1}}(p,-\infty)=\overline{{\lambda}_{1}}(p,-\infty)=
\underline{{\lambda}^{-}_{1}}(p,-\infty)=\overline{{\lambda}^{-}_{1}}(p,-\infty).
$$
\end{thm}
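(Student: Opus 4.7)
The plan is to combine Theorem~\ref{thm5.4} with a reflection change of variables and an integration identity that exploits the symmetry $K(x,y)=K(y,x)$. First, Theorem~\ref{thm5.4} applied to both $K$ and to the reflected kernel $K^-(x,y):=K(-x,-y)$ (which inherits the structural hypotheses of Theorem~\ref{thm5.3} with coefficient $a^-(x)=a(-x)$) yields
\begin{equation*}
\underline{\lambda_1}(p,-\infty)=\overline{\lambda_1}(p,-\infty)=\lambda_0(p),\quad \underline{\lambda_1^-}(p,-\infty)=\overline{\lambda_1^-}(p,-\infty)=\lambda_0^-(p),
\end{equation*}
where $\lambda_0(p)$ and $\lambda_0^-(p)$ are the uniform limits of $\varepsilon u^\varepsilon$ and $\varepsilon u^{-,\varepsilon}$ from Theorem~\ref{thm5.3}. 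Thus the theorem reduces to $\lambda_0(p)=\lambda_0^-(p)$. A change of variables (independent of symmetry) identifies $\lambda_0^-(p)$ with $\lambda_0(-p)$: setting $v^\varepsilon(x):=u^{-,\varepsilon}(-x)$ and substituting $z=-y$ in the defining equation for $u^{-,\varepsilon}$ yields
\begin{equation*}
\varepsilon v^\varepsilon(x)-\int_{\mathbf{R}}K(x,z)e^{-p(z-x)}e^{v^\varepsilon(z)-v^\varepsilon(x)}dz+a(x)=0,
\end{equation*}
which is exactly \eqref{5.1} at parameter $-p$. Uniqueness (Theorem~\ref{thm5.1}) gives $v^\varepsilon=\hat u^\varepsilon$, and hence $\lambda_0^-(p)=\lambda_0(-p)$.

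The heart of the argument is to prove $\lambda_0(p)=\lambda_0(-p)$ using $K(x,y)=K(y,x)$. Let $u^\varepsilon$ and $\hat u^\varepsilon$ be the bounded solutions of \eqref{5.1} at parameters $p$ and $-p$. I would multiply each equation by $e^{u^\varepsilon(x)+\hat u^\varepsilon(x)}$ (which absorbs the self-normalizing factor inside the exponential), subtract, and integrate over $[-N,N]$ to obtain
\begin{equation*}
\int_{-N}^N\varepsilon(u^\varepsilon-\hat u^\varepsilon)(x)\,e^{u^\varepsilon(x)+\hat u^\varepsilon(x)}\,dx=T_1-T_2,
\end{equation*}
where
\begin{equation*}
T_1=\iint_{\substack{x\in[-N,N]\\ y\in\mathbf{R}}}K(x,y)e^{p(y-x)}e^{u^\varepsilon(y)+\hat u^\varepsilon(x)}dydx,\ T_2=\iint_{\substack{x\in[-N,N]\\ y\in\mathbf{R}}}K(x,y)e^{-p(y-x)}e^{u^\varepsilon(x)+\hat u^\varepsilon(y)}dydx.
\end{equation*}
Swapping the dummy variables $x\leftrightarrow y$ in $T_2$ and using $K(x,y)=K(y,x)$ converts $T_2$ into an integral with the same integrand as $T_1$, but over the region $\{y\in[-N,N],\,x\in\mathbf{R}\}$ rather than $\{x\in[-N,N],\,y\in\mathbf{R}\}$. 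Therefore $T_1-T_2$ is supported entirely on the "boundary" set where exactly one of $x,y$ lies outside $[-N,N]$.

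To finish, one bounds $|T_1-T_2|$ by a constant $C_\varepsilon$ that is independent of $N$: under (K7$'$) the boundary region has area controlled by $r_0^2$ and the integrand is pointwise bounded for fixed $\varepsilon$; in case (S2), the even kernel $K(x-y)$ combined with (K7) (via splitting the tail at $|y-x|=r(\varepsilon)$ and using $\varepsilon r(\varepsilon)\to 0$) furnishes an analogous $N$-independent bound. Since $u^\varepsilon$ and $\hat u^\varepsilon$ are bounded below for each fixed $\varepsilon$, $\int_{-N}^N e^{u^\varepsilon+\hat u^\varepsilon}dx\to+\infty$ as $N\to\infty$. Combined with the uniform convergence $\varepsilon(u^\varepsilon-\hat u^\varepsilon)\to\lambda_0(p)-\lambda_0(-p)$ from Theorem~\ref{thm5.3}, for any $\delta>0$ and $\varepsilon$ sufficiently small,
\begin{equation*}
|\lambda_0(p)-\lambda_0(-p)|\leq\frac{C_\varepsilon}{\int_{-N}^N e^{u^\varepsilon+\hat u^\varepsilon}dx}+\delta;
\end{equation*}
sending $N\to\infty$ and then $\delta\to 0^+$ forces $\lambda_0(p)=\lambda_0(-p)$, which completes the chain of equalities. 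The main obstacle I anticipate is the rigorous $N$-independent estimate for $|T_1-T_2|$ in case (S2), where $K$ need not be compactly supported and one must delicately balance the $e^{O(1/\varepsilon)}$ growth of $e^{u^\varepsilon+\hat u^\varepsilon}$ against the tail decay supplied by (K7).
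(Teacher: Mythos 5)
Your overall strategy is essentially the paper's: test the two eigenvalue relations against each other, integrate over a large box, use $K(x,y)=K(y,x)$ to cancel the bulk (so that only a ``boundary'' contribution survives), and compare against the lower bound $(\lambda_0-\lambda_0^--\delta)\int e^{u^\varepsilon+\hat u^\varepsilon}$ coming from the uniform convergence in Theorem \ref{thm5.3}. Your preliminary reduction is a nice bookkeeping simplification: via $x\mapsto-x$ and uniqueness (Theorem \ref{thm5.1}) you identify $\lambda_0^-(p)$ with $\lambda_0(-p)$ and then work with the single kernel $K$ at parameters $\pm p$, whereas the paper keeps the reflected eigenfunction and pairs $\phi(x)$ with $\psi(-x)$ directly; the two are equivalent. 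In case (S1) (compact support (K7)$'$ together with the bound $K\le k_0$ from (K3)$'$) your estimate is complete: the boundary set intersected with $\{|x-y|\le r_0\}$ has measure $O(r_0^2)$ and the integrand is bounded by $k_0e^{|p|r_0}e^{\sup u^\varepsilon+\sup\hat u^\varepsilon}$, which is finite for fixed $\varepsilon$, so the $N\to\infty$ limit at fixed $\varepsilon$ does the job.

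The genuine weak point is your treatment of case (S2). Hypothesis (K7) gives no spatial decay beyond the single cutoff $r(\varepsilon)$: for the part of the boundary region with $|x-y|>r(\varepsilon)$ the best (K7) yields is $\int_{|y-x|>r(\varepsilon)}K_p(x,y)e^{u^\varepsilon(y)}dy\le \alpha_p\,e^{u^\varepsilon(x)}\sup_x\int_{\mathbf{R}}K_p(x,y)dy$, and integrating this in $x$ over all of $[-N,N]$ produces a term proportional to $\int_{-N}^N e^{u^\varepsilon+\hat u^\varepsilon}dx$ with the fixed constant $\alpha_p\sup_x\int K_p$, not an $N$-independent constant $C_\varepsilon$; so the step ``(K7) furnishes an analogous $N$-independent bound'' fails as stated. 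Two easy repairs exist. (a) For the convolution kernel of (S2), (K1) gives all exponential moments, hence $\int_{\mathbf{R}}|s|K(s)e^{ps}ds<\infty$, and by Tonelli $\iint_{x\in[-N,N],\,y\notin[-N,N]}K(x-y)e^{p(y-x)}dydx\le\int_0^\infty sK(s)(e^{ps}+e^{-ps})ds$, which is $N$-independent; multiplying by the (fixed-$\varepsilon$) bound $e^{\sup u^\varepsilon+\sup\hat u^\varepsilon}$ gives your $C_\varepsilon$. Note that since you send $N\to\infty$ \emph{before} $\varepsilon\to0$, the $e^{O(1/\varepsilon)}$ size of $C_\varepsilon$ is harmless, so the ``delicate balance'' you worry about is not actually needed, and (K7) plays no role here. (b) Alternatively, argue as the paper does: using uniform integrability of $K_p(x,x-\cdot)$ (automatic for a convolution kernel under (K1)), choose a cutoff $R$ so large that the far-tail part of the boundary term is at most $\delta'\int_{-N}^N e^{u^\varepsilon+\hat u^\varepsilon}dx$ with $\delta'$ small, and bound the remaining strip of width $R$ by an $N$-independent constant; this ``small constant times the bulk'' version of the estimate is exactly how the paper closes the argument. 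With either repair your proof goes through.
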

\begin{proof}
First by Theorem \ref{thm5.4}, there exist
$u^{\varepsilon}, v^{\varepsilon}\in C{(\mathbf{R})}\cap L^{\infty}(\mathbf{R})$ such that
$$\left\{
   \begin{aligned}
    \int_{\mathbf{R}}K(x,y)e^{p(y-x)}\phi(y)dy-a(x)\phi(x)
    =\varepsilon u^{\varepsilon}(x)\phi(x),\ x\in\mathbf{R},\\
    \int_{\mathbf{R}}K^{-}(x,y)e^{p(y-x)}\psi(y)dy-a^{-}(x)\psi(x)
    =\varepsilon u^{\varepsilon}(x)\psi(x),\ x\in\mathbf{R},
   \end{aligned}
   \right.$$
where $\phi=e^{u^{\varepsilon}},\ \psi=e^{v^{\varepsilon}}\in\mathcal{A}$ and $a^{-}(x)=a(-x)$.
Moreover,
$$\underline{{\lambda}_{1}}(p,-\infty)=\overline{{\lambda}_{1}}(p,-\infty)=\lim\limits_{\varepsilon\to0}
\varepsilon u^{\varepsilon}$$
$$\underline{{\lambda}^{-}_{1}}(p,-\infty)=\overline{{\lambda}^{-}_{1}}(p,-\infty)=\lim\limits_{\varepsilon\to0}
\varepsilon v^{\varepsilon}.$$
We denote $\lambda_{0}:=\lim\limits_{\varepsilon\to0}\varepsilon u^{\varepsilon}$ and
$\lambda^{-}_{0}:=\lim\limits_{\varepsilon\to0}\varepsilon v^{\varepsilon}$.
Now it is sufficient to show
$\lambda_{0}=\lambda^{-}_{0}$. If not, we may, without loss of generality, assume by contradiction that
$\lambda_{0}<\lambda^{-}_{0}$, then there exists $\varepsilon_{0}$ such that
$\varepsilon_{0} u^{\varepsilon_{0}}<\lambda_{0}+\frac{\lambda^{-}_{0}-\lambda_{0}}{4}$ and
$\varepsilon_{0} v^{\varepsilon_{0}}>\lambda^{-}_{0}-\frac{\lambda^{-}_{0}-\lambda_{0}}{4}$. Denote $d_{0}:=\frac{\lambda^{-}_{0}-\lambda_{0}}{4}$. Hence
$$\left\{
   \begin{aligned}
    \int_{\mathbf{R}}K(x,y)e^{p(y-x)}\phi(y)dy-a(x)\phi(x)
    =\varepsilon_{0} u^{\varepsilon_{0}}(x)\phi(x)
    \leq(\lambda_{0}+d_{0})\phi(x),\ x\in\mathbf{R},\\
    \int_{\mathbf{R}}K^{-}(x,y)e^{p(y-x)}\psi(y)dy-a^{-}(x)\psi(x)
    =\varepsilon_{0} u^{\varepsilon_{0}}(x)\psi(x)
    \geq(\lambda^{-}_{0}-d_{0})\psi(x),\ x\in\mathbf{R}.
   \end{aligned}
   \right.$$
Therefore,
\begin{equation}\label{t5.5.1}
\int_{-r}^{r}\psi(-x)\int_{\mathbf{R}}K(x,y)e^{p(y-x)}\phi(y)dydx-\int_{-r}^{r}a(x)\phi(x)\psi(-x)dx
    \leq(\lambda_{0}+d_{0})\int_{-r}^{r}\phi(x)\psi(-x)dx,
\end{equation}
\begin{equation}\label{t5.5.2}
\int_{-r}^{r}\phi(x)\int_{\mathbf{R}}K(x,y)e^{-p(y-x)}\psi(-y)dydx-\int_{-r}^{r}a(x)\phi(x)\psi(-x)dx
    \geq(\lambda^{-}_{0}-d_{0})\int_{-r}^{r}\phi(x)\psi(-x)dx
\end{equation}
for any $r\in\mathbf{R}$. \eqref{t5.5.2} follows from $K(x,y)=K(y,x)$.
By \eqref{t5.5.2}-\eqref{t5.5.1}, we have
\begin{equation}\label{t5.5.3}
\begin{split}
2d_{0}\int_{-r}^{r}\phi(x)\psi(-x)dx
&\leq\int_{-r}^{r}\phi(x)\int_{\mathbf{R}}K(x,y)e^{-p(y-x)}\psi(-y)dydx\\
&\quad-\int_{-r}^{r}\psi(-x)\int_{\mathbf{R}}K(x,y)e^{p(y-x)}\phi(y)dydx\\
&=\int_{-r}^{r}\phi(y)\int_{\mathbf{R}}K_{p}(x,y)\psi(-x)dxdy\\
&\quad-\int_{-r}^{r}\psi(-x)\int_{\mathbf{R}}K_{p}(x,y)\phi(y)dydx\\
&\leq\int_{-r}^{r}\phi(y)\int_{B^{C}_{r}(0)}K_{p}(x,y)\psi(-x)dxdy\\
&=\int_{-r}^{r}\phi(y)\int_{r}^{+\infty}K_{p}(x,y)\psi(-x)dxdy\\
&\quad+\int_{-r}^{r}\phi(y)\int_{-\infty}^{-r}K_{p}(x,y)\psi(-x)dxdy.\\
\end{split}
\end{equation}
Denote the first term of the right hand side of \eqref{t5.5.3} by $A_{1}$ and the second term by $A_{2}$.
Now there exists $r_{0}$ so that
$\int_{B^{c}_{r_{0}}(x)}K(x,y)e^{p(y-x)}dy\leq d_{0}\frac{\inf\limits_{x\in\mathbf{R}}\psi(x)}{2\sup\limits_{x\in\mathbf{R}}\psi(x)}.$
Then
\begin{equation*}
\begin{split}
A_{1}
&=\int_{-r}^{r}\phi(y)\int_{r}^{r+r_{0}}K_{p}(x,y)\psi(-x)dxdy
+\int_{-r}^{r}\phi(y)\int_{r+r_{0}}^{+\infty}K_{p}(x,y)\psi(-x)dxdy\\
&\leq r_{0}\sup\limits_{x\in\mathbf{R}}\psi(x)\sup\limits_{x\in\mathbf{R}}\phi(x)
 \int_{-r}^{r}K_{p}(x,y)dy+\frac{d_{0}}{2}\inf\limits_{x\in\mathbf{R}}\psi(x)\int_{-r}^{r}\phi(y)dy.\\
\end{split}
\end{equation*}
Similarly,
$A_{2}\leq r_{0}\sup\limits_{x\in\mathbf{R}}\psi(x)\sup\limits_{x\in\mathbf{R}}\phi(x)
 \int_{-r}^{r}K_{p}(x,y)dy+\frac{d_{0}}{2}\inf\limits_{x\in\mathbf{R}}\psi(x)\int_{-r}^{r}\phi(y)dy.$
Combining this with \eqref{t5.5.3}, we have
\begin{equation*}
\begin{split}
2d_{0}\inf\limits_{x\in\mathbf{R}}\psi(x)\int_{-r}^{r}\phi(y)dy
&\leq2d_{0}\int_{-r}^{r}\phi(x)\psi(-x)dx\leq A_{1}+A_{2}\\
&\leq2r_{0}\sup\limits_{x\in\mathbf{R}}\psi(x)\sup\limits_{x\in\mathbf{R}}\phi(x)
 \int_{-r}^{r}K_{p}(x,y)dy+{d_{0}}\inf\limits_{x\in\mathbf{R}}\psi(x)\int_{-r}^{r}\phi(y)dy.\\
\end{split}
\end{equation*}
Hence \begin{equation}\label{t5.5.4}
d_{0}\inf\limits_{x\in\mathbf{R}}\psi(x)\int_{-r}^{r}\phi(y)dy
\leq2r_{0}\sup\limits_{x\in\mathbf{R}}\psi(x)\sup\limits_{x\in\mathbf{R}}\phi(x)\int_{-r}^{r}K_{p}(x,y)dy,
\end{equation}
which is a contradiction since the right hand side of \eqref{t5.5.4} $\to+\infty$ as $r\to+\infty$
while the left hand side of \eqref{t5.5.4} is bounded!
\end{proof}

\subsection{Periodic coefficients}
In this subsection, we always assume that $a(\cdot)\in\ C(\mathbf{R})$ is periodic function with period $L$.
We consider a special class of kernel $K(x,y)=\sum\limits_{n=1}^{N}a_{n}\delta_{q_{n}}(x-y)$,
where $N\in\mathbf{N}\cup\{+\infty\}$, $a_{n}$ is a positive sequence, $q_{n}\in\mathbf{R}$
and $\delta$ is the Dirac's delta function on $\mathbf{R}$.
Assume that $K$ satisfies the following hypotheses:\\
(H1) $\sum\limits_{n=1}^{N}a_{n}e^{-pq_{n}}<+\infty\ \forall p\in\mathbf{R}.$\\
(H2) $\inf_{p\in\mathbf{R}}\sum\limits_{n=1}^{N}a_{n}e^{-pq_{n}}-\sum\limits_{n=1}^{N}a_{n}+
\lim\limits_{R\to+\infty}\inf\limits_{x\in I_{R}}f^{\prime}_{s}(x,0)>0.$\\
(H3) The assumptions of Example {eg4.2} hold.\\
(H4) Either (1) $\exists q_{n_{3}}$ such that $\frac{q_{n_{3}}}{L}\notin\mathbf{Q} (i=1,2)$ or

\ \ \ \ \ \ \ \ \ \ \ \ \ (2) $\exists (a,b)$ such that $(a,b)\subset\overline{\{q_{n}\}_{n}}$.

The condition (H1) and (H2) are corresponding to (K1) and (K4) respectively.
\begin{thm}\label{thm5.6}
Under the above assumptions. We have
$$\underline{\omega}=\overline{\omega}.$$
\end{thm}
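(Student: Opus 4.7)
The plan is to reduce to the setting of Remark \ref{re4.3} by producing a single $L$-periodic test function $\phi_{per}\in\mathcal{A}$ and a scalar $\lambda_{per}$ satisfying $L_{p}\phi_{per}=\lambda_{per}\phi_{per}$ on all of $\mathbf{R}$. By Corollary \ref{cor2.1} such an eigenpair forces $\underline{\lambda_{1}}(p,-\infty)=\overline{\lambda_{1}}(p,-\infty)=\lambda_{per}$, hence $\underline{H}(p)=\overline{H}(p)$ for every $p\in\mathbf{R}$ via \eqref{2.2}, and the conclusion $\underline{\omega}=\overline{\omega}$ is immediate from \eqref{2.3}.

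Since $K(x,y)=\sum_{n}a_{n}\delta_{q_{n}}(x-y)$ depends only on $x-y$, the twisted operator takes the explicit shift form
$$L_{p}\phi(x)=\sum_{n=1}^{N}a_{n}e^{-pq_{n}}\phi(x-q_{n})-a(x)\phi(x),$$
and because $a$ is $L$-periodic it is natural to seek $\phi_{per}$ in the Banach space $C_{per}([0,L])$ of $L$-periodic continuous functions, on which the shifts $x\mapsto x-q_{n}$ descend to rotations of the torus $\mathbf{R}/L\mathbf{Z}$. First I would verify that $L_{p}$ preserves $C_{per}([0,L])$ and is bounded there via (H1), and that after adding a large enough multiple of the identity it becomes positivity-preserving. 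I would then apply the Krein--Rutman theorem to extract a nonnegative principal eigenpair $(\lambda_{per},\phi_{per})$ with $\phi_{per}\not\equiv 0$.

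Hypothesis (H4) enters precisely in upgrading $\phi_{per}\geq 0$ to $\phi_{per}>0$ uniformly, so that $\phi_{per}$ actually lands in $\mathcal{A}$. In case (H4)(1), some ratio $q_{n_{3}}/L$ is irrational, so the orbit $\{k q_{n_{3}}\bmod L:k\in\mathbf{Z}\}$ is dense in $[0,L]$ and iterated shifts by $q_{n_{3}}$ spread positive mass into every arc of the torus. In case (H4)(2), the continuum of shifts indexed by $q\in(a,b)\subset\overline{\{q_{n}\}}$ already smears positive mass across a full interval in a single step. Either way, some power of (a suitably shifted) $L_{p}$ is strongly positive in the Krein--Rutman sense, which forces $\phi_{per}>0$ everywhere, yielding the required test function in $\mathcal{A}$. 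Once $\phi_{per}\in\mathcal{A}$ is in hand, the eigenvalue identity extends from the torus to all of $\mathbf{R}$ by periodicity, Corollary \ref{cor2.1} applies, and the theorem follows.

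The main obstacle will be case (H4)(1): the weighted shift by an irrational translate is not a compact operator on $C_{per}([0,L])$, so the standard compact-operator version of Krein--Rutman does not apply off the shelf. I expect to handle this by a regularization, replacing each Dirac mass $\delta_{q_{n}}$ by a narrow mollifier $\rho_{\eta}(\cdot-q_{n})$ to obtain compact integral operators $L_{p}^{\eta}$, extracting principal eigenpairs $(\lambda^{\eta},\phi^{\eta})$ at each scale $\eta>0$, and then passing to the limit $\eta\to 0$ using a Harnack-type bound in the spirit of Theorem \ref{thm5.2} (whose discrete-kernel analogue is essentially the mechanism driving Example \ref{eg4.2}) together with the Lipschitz dependence of the principal eigenvalues on the zero-order coefficient from Proposition \ref{prop2.2}. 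The density ingredient supplied by (H4) is stable under smoothing, so the uniform positivity of the limiting $\phi_{per}$ is preserved.
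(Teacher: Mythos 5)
There is a genuine gap at the heart of your plan: the existence of an exact periodic eigenpair $(\lambda_{per},\phi_{per})$ with $\phi_{per}\in\mathcal{A}$ and $L_{p}\phi_{per}=\lambda_{per}\phi_{per}$. For the Dirac-comb kernel the operator on $C_{per}([0,L])$ is $\phi\mapsto\sum_{n}a_{n}e^{-pq_{n}}\phi(\cdot-q_{n})-a(\cdot)\phi(\cdot)$, a weighted sum of rotations plus a multiplication operator; neither the shifts nor the multiplication part are compact, so Krein--Rutman does not apply, and this is not repaired by adding a multiple of the identity. Your mollification fix does not close the gap either. First, even for the mollified kernel the operator is only a compact perturbation of multiplication by $-a$, and for nonlocal operators of this type the spectral bound need not be an eigenvalue with a continuous positive eigenfunction (this is exactly the issue studied in \cite{C10,LCW,BCV}, which is why Remark \ref{re4.3} must \emph{assume} the eigenpair exists via those criteria). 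Second, and more seriously, the limit $\eta\to0$ requires a Harnack bound $\sup\phi^{\eta}/\inf\phi^{\eta}\leq C$ \emph{uniform in} $\eta$, but the constants in Lemmas \ref{lem5.1}--\ref{lem5.2} and Theorem \ref{thm5.2} depend on the bounds $\theta_{0},k_{0},\Delta$ of (K3)$^{\prime}$ and on (K7), all of which degenerate for $\rho_{\eta}$-mollified Dirac masses (the kernel blows up like $\eta^{-1}$ and has no uniform lower bound on a fixed band $|x-y|\leq\Delta$). In case (H4)(1) the positivity mechanism is iterating the shift by $q_{n_{3}}$ with $q_{n_{3}}/L$ irrational: no finite power of the operator is strongly positive, the number of iterations needed to cover the torus at scale $\eta$ blows up as $\eta\to0$ (and can blow up arbitrarily fast for Liouville ratios $q_{n_{3}}/L$), so the quantitative positivity of $\phi^{\eta}$ is not ``stable under smoothing'' as you assert; $\inf\phi^{\eta}$ may collapse and the limit may fail to be a continuous positive eigenfunction. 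In short, reducing Theorem \ref{thm5.6} to Corollary \ref{cor2.1} presupposes an exact eigenfunction whose existence is precisely what cannot be guaranteed here.

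The paper avoids this obstruction by never constructing an exact eigenfunction. It solves the $\varepsilon$-regularized nonlinear equation \eqref{5.1} by Perron's method, obtaining an $L$-periodic $u^{\varepsilon}$; it then proves, along the orbits $mq_{k}$ and using the density of $A_{\pm}\ (\mathrm{mod}\ L)$ supplied by (H4), the one-sided estimates $\liminf_{\varepsilon\to0^{+}}\varepsilon\bigl(u^{\varepsilon}(mq_{k})-u^{\varepsilon}(0)\bigr)\geq0$, and combines them with the comparison principle (Theorem \ref{thm5.1}) to show $\varepsilon\bigl(u^{\varepsilon}(x)-u^{\varepsilon}(0)\bigr)\to0$ uniformly, hence $\varepsilon u^{\varepsilon}\to\lambda_{0}$ uniformly for some constant $\lambda_{0}$. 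Since $L_{p}e^{u^{\varepsilon}}=\varepsilon u^{\varepsilon}e^{u^{\varepsilon}}$ and $e^{u^{\varepsilon}}\in\mathcal{A}$, these serve as \emph{approximate} test functions in Definition \ref{def2.1} on both sides, squeezing $\underline{\lambda_{1}}(p,-\infty)=\overline{\lambda_{1}}(p,-\infty)=\lambda_{0}$ and hence $\underline{H}=\overline{H}$ and $\underline{\omega}=\overline{\omega}$, with no exact eigenpair ever needed. If you want to salvage your route, you would have to either verify a Coville-type existence criterion directly for the Dirac-comb operator (not available in the cited references) or replace the exact eigenfunction by such an approximate-test-function argument, at which point you have essentially rejoined the paper's proof.
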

\begin{proof}
The proof was divided into three steps:

Step 1: The Perron's method gives us a unique periodic solution $u^{\varepsilon}\in C(\mathbf{R})$ of
$$\big(\varepsilon u^{\varepsilon}(x)+a(x)\big)\phi(x)=
\int_{\mathbf{R}}K_{p}(x,y)\phi(y)dy=\sum\limits_{n=1}^{N}a_{n}e^{-pq_{n}}\phi(x-q_{n}),$$
where $\phi(x)=e^{u^{\varepsilon}(x)}.$ Moreover, the period of $u^{\varepsilon}$ is $L$.
It is sufficient to show that
$\varepsilon u^{\varepsilon}(x)$ converges to some constant as $\varepsilon\to0$
uniformly with respect to $x\in\mathbf{R}$.

Step 2: For any $q_{k}\in\{q_{n}\}, m\in\mathbf{Z}$, we have
\begin{equation}\label{t5.6.1}
\big(\varepsilon u^{\varepsilon}(mq_{k})+a(mq_{k})\big)\phi(mq_{k})
=\sum\limits_{n=1}^{N}a_{n}e^{-pq_{n}}\phi(mq_{k}-q_{n}).
\end{equation}
Hence, from \eqref{t5.6.1}, we have
$$\big(\varepsilon u^{\varepsilon}(mq_{k})+a(mq_{k})\big)\phi(mq_{k})
\geq a_{k}e^{-pq_{k}}\phi((m-1)q_{k}),$$
which yields
$$e^{u^{\varepsilon}(mq_{k})-u^{\varepsilon}((m-1)q_{k})}
\geq\frac{a_{k}e^{-pq_{k}}}{\varepsilon u^{\varepsilon}(mq_{k})+a(mq_{k})}
\geq\frac{a_{k}e^{-pq_{k}}}{\overline{c}+a(mq_{k})},$$
the latter inequality following from \eqref{subsuper}. Therefore,
$$\liminf\limits_{\varepsilon\to0^{+}}\varepsilon(u^{\varepsilon}(mq_{k})-u^{\varepsilon}((m-1)q_{k}))\geq
\liminf\limits_{\varepsilon\to0^{+}}\{\varepsilon\ln(a_{k}e^{-pq_{k}})-\varepsilon\ln(\overline{c}+a(mq_{k}))\}
=0.$$
Now it is easy to find that
$$\liminf\limits_{\varepsilon\to0^{+}}\varepsilon(u^{\varepsilon}(mq_{k})-u^{\varepsilon}(0))\geq0,\
\forall m=1,2,\cdots,$$
and
$$\liminf\limits_{\varepsilon\to0^{+}}\varepsilon(u^{\varepsilon}(0)-u^{\varepsilon}(mq_{k}))\geq0,\
\forall m=-1,-2,\cdots,$$
i.e., \begin{equation}\label{t5.6.2}
\limsup\limits_{\varepsilon\to0^{+}}\varepsilon(u^{\varepsilon}(-mq_{k})-u^{\varepsilon}(0))\leq0,\
\forall m=1,2,\cdots.
\end{equation}
Form the assumption (H3), we have
$[0,L]\subset\overline{A_{\pm}},$ where $A_{\pm}:=\bigcup\limits_{m=1}^{\infty}\pm m\{q_{n}\}(\mod L)$.

Step 3: First, we show that $\varepsilon(u^{\varepsilon}(x)-u^{\varepsilon}(0))\to0$ as
$\varepsilon\to0$ uniformly with respect to $x$.
Denote $\varepsilon\hat{u}^{\varepsilon}(x):=\varepsilon(u^{\varepsilon}(x)-u^{\varepsilon}(0)).$
If not, there exist $\varepsilon_{n}\to0,\ x_{n}\in[0,L],\ \theta>0$ such that
$|\varepsilon_{n}\hat{u}^{\varepsilon_{n}}(x_{n})|\geq2\theta$. We will prove the case where
$\varepsilon_{n}\hat{u}^{\varepsilon_{n}}(x_{n})>2\theta$, and one can prove the case where
$\varepsilon_{n}\hat{u}^{\varepsilon_{n}}(x_{n})<-2\theta$ similarly.
Since $u^{\varepsilon}(x)\in C(\mathbf{R})$ and $[0,L]\subset\overline{A_{\pm}},$
one can choose $x_{n}\in A_{-}$. As we did in the proof of
Theorem \ref{thm5.3}, we have
$$2\theta\leq\varepsilon_{n}\hat{u}^{\varepsilon_{n}}(x_{n})\leq\varepsilon_{n}\hat{u}^{\varepsilon_{n}}(x_{m})+\eta_{m,n}
\leq\varepsilon_{n}(u^{\varepsilon_{n}}(x_{m})-u^{\varepsilon_{n}}(0))+\eta_{m,n},$$
where $\eta_{m,n}=\|a(\cdot+x_{n})-a(\cdot+x_{m})\|_{\infty}$ with
$\eta_{m,n}\to0$ as $m,n\to\infty$. Hence there exists $n_{0}$ such that
$$2\theta\leq\varepsilon_{n}(u^{\varepsilon_{n}}(x_{n_{0}})-u^{\varepsilon_{n}}(0))+\eta_{n_{0},n}
\leq\varepsilon_{n}(u^{\varepsilon_{n}}(x_{n_{0}})-u^{\varepsilon_{n}}(0))+\theta,\ \ \forall n\geq n_{0}.$$
Then
$\limsup\limits_{\varepsilon\to0^{+}}\varepsilon(u^{\varepsilon}(x_{n_{0}})-u^{\varepsilon}(0))\geq\theta$,
which contradicts \eqref{t5.6.2}!

Finally, by a similar argument to Theorem \ref{thm5.3}, one can prove that
$\varepsilon u^{\varepsilon}(x)$ converges to some constant as $\varepsilon\to0$
uniformly with respect to $x\in\mathbf{R}$.
\end{proof}

\begin{thm}
Under the assumptions in Theorem \ref{thm5.6}. If $K(x,y)=K(y,x)$, then
$$
\underline{{\lambda}_{1}}(p,-\infty)=\overline{{\lambda}_{1}}(p,-\infty)=
\underline{{\lambda}^{-}_{1}}(p,-\infty)=\overline{{\lambda}^{-}_{1}}(p,-\infty).
$$
\end{thm}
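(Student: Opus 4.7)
I plan to mirror the strategy of Theorem \ref{thm5.5}, replacing the large-$r$ truncation argument by a one-period duality identity that the $L$-periodicity of the objects from Theorem \ref{thm5.6} makes available. The goal is to reduce everything to proving $\lambda_0 = \lambda_0^{-}$, where $\lambda_0$ and $\lambda_0^{-}$ denote the common values delivered by Theorem \ref{thm5.6} for the original and reversed problems.

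\textbf{Setup.} I would apply Theorem \ref{thm5.6} twice --- once to $(K,a)$ and once to $(K^{-},a^{-})$, which under the symmetry $K(x,y)=K(y,x)$ amounts to the same kernel with potential $a(-\cdot)$ and so inherits (H1)--(H4) unchanged. This produces $L$-periodic positive functions $\phi^{\varepsilon}=e^{u^{\varepsilon}}$ and $\psi^{\varepsilon}=e^{v^{\varepsilon}}$ satisfying
$$L_p \phi^{\varepsilon} = \varepsilon u^{\varepsilon} \phi^{\varepsilon}, \qquad L_p^{-} \psi^{\varepsilon} = \varepsilon v^{\varepsilon} \psi^{\varepsilon} \quad \text{on } \mathbf{R},$$
with $\varepsilon u^{\varepsilon}\to\lambda_0$ and $\varepsilon v^{\varepsilon}\to\lambda_0^{-}$ uniformly as $\varepsilon\to 0^{+}$. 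By Theorem \ref{thm5.6} itself, $\lambda_0 = \underline{{\lambda}_{1}}(p,-\infty) = \overline{{\lambda}_{1}}(p,-\infty)$ and $\lambda_0^{-} = \underline{{\lambda}^{-}_{1}}(p,-\infty) = \overline{{\lambda}^{-}_{1}}(p,-\infty)$, so everything reduces to the equality $\lambda_0 = \lambda_0^{-}$.

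\textbf{Duality and contradiction.} For the Dirac kernel, $K(x,y)=K(y,x)$ is equivalent to an involution $\sigma$ on indices with $q_{\sigma(n)}=-q_n$ and $a_{\sigma(n)}=a_n$. Using this re-indexing together with the change of variable $y=x-q_n$ in the shift term and the $L$-periodicity of both arguments, I expect to verify the adjoint identity
$$\int_{0}^{L} L_p \phi(x)\,\chi(x)\,dx = \int_{0}^{L} \phi(y)\,L_{-p}\chi(y)\,dy \quad \text{for all $L$-periodic }\phi,\chi,$$
i.e.\ $L_p^{*}=L_{-p}$ on periodic functions. Separately, setting $\tilde\psi^{\varepsilon}(x):=\psi^{\varepsilon}(-x)$ and writing $L_p^{-}\psi^{\varepsilon}(x) = \sum_n a_n e^{pq_n}\psi^{\varepsilon}(x+q_n) - a(-x)\psi^{\varepsilon}(x)$, the substitution $x\mapsto -x$ yields directly
$$L_{-p}\tilde\psi^{\varepsilon}(x) = \varepsilon v^{\varepsilon}(-x)\,\tilde\psi^{\varepsilon}(x), \qquad x \in \mathbf{R}.$$
Pairing $L_p\phi^{\varepsilon}=\varepsilon u^{\varepsilon}\phi^{\varepsilon}$ against $\tilde\psi^{\varepsilon}$ in $L^{2}([0,L])$ and invoking the two displays above gives
$$\int_{0}^{L}\bigl[\varepsilon u^{\varepsilon}(x)-\varepsilon v^{\varepsilon}(-x)\bigr]\,\phi^{\varepsilon}(x)\,\tilde\psi^{\varepsilon}(x)\,dx = 0 \qquad \forall\,\varepsilon>0.$$
If $\lambda_0 \neq \lambda_0^{-}$, say $\lambda_0 < \lambda_0^{-}$, then for $\varepsilon$ small the bracket is uniformly at most $-(\lambda_0^{-}-\lambda_0)/2<0$ while $\phi^{\varepsilon}\tilde\psi^{\varepsilon}>0$, contradicting the identity. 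Hence $\lambda_0 = \lambda_0^{-}$ and all four principal eigenvalues coincide.

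\textbf{Main obstacle.} The main subtlety lies in the careful bookkeeping around the involution $\sigma$ induced by $K(x,y)=K(y,x)$: I need to thread it correctly through both the adjoint identity $L_p^{*}=L_{-p}$ and the reflected equation for $\tilde\psi^{\varepsilon}$, paying attention to signs of $p$ in the exponential factors after re-indexing. I also need to confirm that Theorem \ref{thm5.6}'s hypotheses apply to the reversed problem --- a routine check since $K^{-}=K$ as measures and $a^{-}=a(-\cdot)$ remains $L$-periodic, so that (H1)--(H4) transfer unchanged under $p\mapsto -p$.
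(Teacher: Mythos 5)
Your proposal is correct, and it reaches the conclusion by a genuinely cleaner route than the one the paper intends (the paper simply says ``similar to Theorem \ref{thm5.5}''). The shared core is the same: reduce to $\lambda_0=\lambda_0^-$ for the $\varepsilon$-regularized solutions and test the two eigen-relations against each other's reflected eigenfunction, using $K(x,y)=K(y,x)$. Where you differ is in the execution of the pairing. The paper's Theorem \ref{thm5.5} argument integrates $\phi(x)\psi(-x)$ over $[-r,r]$, uses the kernel symmetry to swap arguments, and then must control the mismatch terms coming from the truncation (the $A_1,A_2$ tail estimates, the choice of $r_0$, and the contradiction as $r\to+\infty$). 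You instead exploit the exact $L$-periodicity of $u^{\varepsilon},v^{\varepsilon}$ furnished by the proof of Theorem \ref{thm5.6} and the discrete (Dirac) structure of $K$ to get an exact one-period duality $\int_0^L (L_p\phi)\chi=\int_0^L\phi\,L_{-p}\chi$ (valid for $L$-periodic $\phi,\chi$ precisely because of the symmetry of the measure $\sum_n a_n\delta_{q_n}$), plus the reflection identity $L_{-p}\tilde\psi^{\varepsilon}=\varepsilon v^{\varepsilon}(-\cdot)\tilde\psi^{\varepsilon}$, so that the pairing identity holds with no remainder and the contradiction is immediate. What each approach buys: the paper's truncation method is the one that survives in the merely almost periodic setting of Theorem \ref{thm5.5}, where no exact period exists; yours is shorter and avoids all tail estimates, but depends essentially on exact periodicity and on Fubini via (H1). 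Two small points to tidy up, neither of which affects correctness: the ``involution $\sigma$'' is better phrased as evenness of the measure $\sum_n a_n\delta_{q_n}$ (atoms may repeat), and the identification $\lambda_0=\underline{\lambda}_1(p,-\infty)=\overline{\lambda}_1(p,-\infty)$ (and likewise for $\lambda_0^-$) comes not from the bare statement of Theorem \ref{thm5.6} but from its proof, by using $e^{u^{\varepsilon}}\in\mathcal{A}$ as a test function together with Corollary \ref{cor2.1}/Proposition \ref{prop2.1}, exactly as in Step 1 of Theorem \ref{thm5.4}; you should say this explicitly, and also note (as you do) that (H1)--(H4) hold for the reflected data $(K^-,a^-)$ so the periodic solution $v^{\varepsilon}$ exists and is $L$-periodic.
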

\begin{proof}
The proof is similar to Theorem \ref{thm5.5}.
\end{proof}

\end{document}